\documentclass[11pt]{article}
\usepackage{amsfonts}
\usepackage{amssymb}
\usepackage{hyperref}
\usepackage{pstricks}
\usepackage{pst-plot}
\usepackage{pst-node}
\usepackage{multido}

\newtheorem{definition}{Definition}
\newtheorem{example}{Example}
\newtheorem{notation}{Notation}
\newtheorem{theorem}{Theorem}
\newtheorem{proposition}{Proposition}
\newtheorem{corollary}{Corollary}
\newtheorem{claim}{Claim}
\newtheorem{remark}{Remark}
\newcommand{\qed}{\rule{7pt}{7pt}}
\newenvironment{proof}{\noindent{\bf Proof}\hspace*{1em}}{\qed\bigskip}

\newcommand{\Var}{{\rm Var}}

\newcommand{\PutGrid}[3]{\
\multido{\rA=#1+#3}{7}{ \ % Change 7 here & in the next %lines to number of
   \multido{\rB=#2+#3}{7}{ \ % cells +1 (currently 6 %cells) in the matrix
      \psline(\rA,#2)(\rA,\rB) \
    } \
} \ \multido{\rA=#2+#3}{7}{ \
   \multido{\rB=#1+#3}{7}{ \
      \psline(#1,\rA)(\rB,\rA) \
  }\
 } \
}

\oddsidemargin=0.15in \evensidemargin=0.15in \topmargin=-.5in
\textheight=9in \textwidth=6.25in

\begin{document}

\title{On The Influences of Variables on Boolean Functions in Product Spaces}

\author{
Nathan Keller \\
Einstein Institute of Mathematics, Hebrew University\\
Jerusalem 91904, Israel\\
{\tt nkeller@math.huji.ac.il}\\
}

\maketitle

\begin{abstract}
In this paper we consider the influences of variables on Boolean
functions in general product spaces. Unlike the case of functions
on the discrete cube where there is a clear definition of
influence, in the general case at least three definitions were
presented in different papers. We propose a family of definitions
for the influence, that contains all the known definitions, as
well as other natural definitions, as special cases. We prove a generalization 
of the BKKKL theorem, which is tight in terms of the
definition of influence used in the assertion, and use it to
generalize several known results on influences in general product
spaces.
\end{abstract}

\section{Introduction}

Influences of variables on Boolean functions have been extensively
studied during the last few decades. This study led to important
applications in Theoretical Computer Science, Combinatorics,
Mathematical Physics, Social Choice Theory, and other areas. The
basic results on influences were obtained for functions on the
discrete cube, but some applications required a generalization of
the results to more general product spaces. Unlike the discrete
case, where there exists a single natural definition of influence,
for general product spaces at least three definitions were
presented in different papers. All the definitions are based on
dividing the space into subspaces called {\it fibers}.
\begin{notation}
In the following definitions, $X$ denotes a product space 
$X = X_1 \times X_2 \times \ldots \times X_n$, endowed with a product
measure $\mu = \mu_1 \otimes \mu_2 \otimes \ldots \otimes \mu_n$. 
Throughout the paper, $\log n$ denotes $\log_2 n$.
\end{notation}
\begin{definition}
For $x=(x_1,\ldots,x_n) \in X$ and for $1 \leq k \leq n$, the fiber
of $x$ in the $k$-th direction is
\[
s_k(x)=\{y:y_i=x_i, \forall i \neq k\}.
\]
\end{definition}
The influence of the $k$-th variable on a function $f:X
\rightarrow \{0,1\}$ is the expectation, over all the fibers in
the $k$-th direction, of the influence of the variable on each
fiber. The original definition of influences in product spaces,
introduced in~\cite{BKKKL}, is the following:
\begin{definition}
For $f:X \rightarrow \{0,1\}$, and for $1\leq k\leq n$, the
influence of the $k$-th variable on $f$ is
\[
I_f(k)=\mu \Big[\{x\in X: f \mbox{ is non-constant on } s_k(x)\}
\Big].
\]
\label{Def:BKKKL}
\end{definition}
Another widely used definition (see, e.g.,
~\cite{Hatami,Keller,Maj-Stablest}), is:
\begin{definition}
For a function $f:X \rightarrow \{0,1\}$, an element
$x=(x_1,\ldots,x_n)\in X$, and for $1\leq k\leq n$, denote the
restriction of $f$ to the fiber $s_k(x)$ by $f_k^x:X_k \rightarrow
\{0,1\}$. That is, for all $t\in X_k$,
\[
f_k^x(t)=f(x_1,\ldots,x_{k-1},t,x_{k+1},\ldots,x_n).
\]
The influence of the $k$-th variable on $f$ is
\[
\tilde{I}_f(k)=\mathbb{E}_{x \in X} \Var(f_k^x).
\]
\label{Def:Tilde}
\end{definition}
The difference between the definitions is demonstrated by the
following example:
\begin{example}
Consider $X=[0,1]^n$, endowed with the Lebesgue measure
$\lambda$. Let $f:[0,1]^n \rightarrow \{0,1\}$ be defined by:
\[
f(x)=1 \Longleftrightarrow x_i>1/n, \forall 1\leq i\leq n.
\]
It is easy to see that for all $1 \leq k \leq n$ we have $I_k(f)
\approx 1/e$, while $\tilde{I}_k(f) \approx (n-1)/en^2$.
\end{example}
In general, for every function $f$ we have $\tilde{I}_k(f) \leq
I_k(f)/4$, but in many cases $\tilde{I}_k(f)$ is much smaller than
$I_k(f)$.

\medskip

\noindent The most well-known theorem concerning influences in
general product spaces is the BKKKL theorem~\cite{BKKKL}:
\begin{theorem}[Bourgain, Kahn, Kalai, Katznelson, and Linial]
For every $f:X \rightarrow \{0,1\}$ such that $\mathbb{E}f=p$,
there exists a variable $k$ such that $I_f(k) \geq cp(1-p)\log
n/n$, where $c$ is a universal constant.
\label{Theorem:BKKKL}
\end{theorem}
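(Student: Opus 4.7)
The plan is to reduce the general product-space statement to the discrete cube via (i) a measure-theoretic normalization and (ii) binary discretization, and then invoke a block-level version of the Kahn-Kalai-Linial (KKL) theorem.

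For the first reduction I would use a standard tool: for each $k$, the inverse CDF of $\mu_k$, modified appropriately at atoms, yields a measure-preserving map $\phi_k:([0,1],\lambda)\to(X_k,\mu_k)$. Setting $\tilde f=f\circ(\phi_1\times\cdots\times\phi_n)$, the event ``$\tilde f$ is non-constant on the $k$-th fiber'' pulls back to ``$f$ is non-constant on the $k$-th fiber'', so $I_{\tilde f}(k)=I_f(k)$ for every $k$ and $\mathbb{E}\tilde f=p$. This reduces the theorem to $X=[0,1]^n$ with Lebesgue measure.

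For the second reduction I would identify $[0,1]$ with $\{0,1\}^{\mathbb{N}}$ through binary expansion, realizing $f$ as a function on $\{0,1\}^{n\times\mathbb{N}}$ grouped into $n$ blocks. Under this identification, $I_f(k)$ equals the probability that $f$ is non-constant when the entire $k$-th block is resampled independently. Truncating to $N$ bits per block produces a function $f_N:\{0,1\}^{nN}\to\{0,1\}$ whose $k$-th block influence converges to $I_f(k)$ as $N\to\infty$, so it suffices to control these block influences uniformly in $N$.

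The final and technically hardest step is to show that some block influence of $f_N$ is at least $cp(1-p)\log n/n$. Applying the standard KKL theorem to $f_N$ as a function of $nN$ individual bits yields only $\max_j I_{f_N}(j)\ge cp(1-p)\log(nN)/(nN)$, which is too weak by a factor of $N$ and in fact degenerates as $N\to\infty$. The fix is to run the hypercontractive (Bonami-Beckner) argument behind KKL at the \emph{block} level: replace the single-coordinate discrete derivative by the block-wise variation operator, apply a noise operator that acts with a fixed parameter on each of the $n$ blocks rather than on each of the $nN$ individual coordinates, and push the Talagrand-style inequality through to conclude that some block $k$ has block influence at least $cp(1-p)\log n/n$. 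The main obstacle is precisely this block-level hypercontractive estimate, which must be proved so that the resulting bound is independent of the block size $N$; once it is in place, the theorem follows by combining it with the two reductions and taking $N\to\infty$.
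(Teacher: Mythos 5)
Your first two reductions (inverse-CDF transfer to $[0,1]^n$ with Lebesgue measure, then binary expansion of each coordinate into a block of bits) are standard and match the setting the paper works in. The problem is your third step, which is not a technical loose end but the entire content of the theorem, and the specific route you propose for it cannot be pushed through. A ``block-level'' hypercontractive estimate for the operator $\rho\,\mathrm{Id}+(1-\rho)E_k$ (resample the whole $k$-th block) with constants independent of the block size $N$ is false: testing on the indicator of a set $A$ of measure $\delta$ depending on one block gives $\|T_\rho 1_A\|_2\ge\rho\,\delta^{1/2}$ while $\|1_A\|_q=\delta^{1/q}$, so no inequality $\|T_\rho g\|_2\le\|g\|_q$ with $q<2$ holds uniformly as $\delta\to 0$. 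Worse, even if some Talagrand-style argument ran at block level, it would naturally control only $L^2$ (variance-type) block influences $\mathbb{E}_x\mathrm{Var}(f_k^x)$, and for those the conclusion $\max_k\ge c\,p(1-p)\log n/n$ is simply not true: the function $f(x)=1\iff x_i>1/n\ \forall i$ has $\mathbb{E}f\approx 1/e$ but every variance-influence is $\approx 1/(en)$, far below $\log n/n$. So any proof of BKKKL must genuinely use the non-constancy (Definition~\ref{Def:BKKKL}) influence, or at least an $h$-influence with $h\gtrsim Ent$, and that information is exactly what your block-level $L^2$ machinery discards.

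The route actually used (in \cite{BKKKL}, in \cite{Friedgut2}, and in this paper's Theorem~\ref{Our-Main-Theorem}, which implies the statement) inserts one step you are missing: \emph{monotonization}. A shifting argument (Theorem~\ref{Thm:Monotonization} here, Lemma~1 of \cite{BKKKL} for Definition~\ref{Def:BKKKL}) replaces $f$ by a monotone function with the same expectation and no larger influences. Monotonicity is what makes your binary discretization affordable: each fiber restriction is then the indicator of an interval $[1-t_0,1]$, so after truncating to only $l=\Theta(\log n)$ bits per coordinate the sum of the $l$ bit-influences inside block $i$ is bounded by a constant times the influence of coordinate $i$ (Proposition~\ref{Prop:Discretization}; $\le 2I_f(i)$ in \cite{BKKKL}, $\le 6I_f^h(i)$ here). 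Without monotonization a single non-constant fiber can contribute $\Theta(N)$ to the sum of bit-influences, which is precisely why your truncation-plus-KKL accounting degenerates. Once the blocks are monotone and only $\Theta(\log n)$ bits wide, the ordinary KKL theorem in the Friedgut--Kalai form (Theorem~\ref{Thm:Generalized-KKL}) applied on $\{0,1\}^{nl}$ already yields $\sum_i I_f(i)\ge c\,p(1-p)\log n$, hence some $I_f(k)\ge c\,p(1-p)\log n/n$; no new hypercontractive inequality independent of $N$ is needed, and none is available.
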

The BKKKL theorem was used to obtain several important results.
%mainly concerning functions on the discrete cube endowed with the
%product measure $\mu=\mu_p^{\otimes n}$.
For example, it was a
central tool in showing that any monotone graph property has a
sharp threshold~\cite{Friedgut-Kalai}.

\medskip

\noindent In this paper we present a generalized definition of
influences in general product spaces, which contains
Definitions~\ref{Def:BKKKL} and~\ref{Def:Tilde} as special cases:
\begin{definition}
Let $h:[0,1] \rightarrow [0,1]$. For every $f:X \rightarrow
\{0,1\}$ and for each $1\leq k\leq n$, the \textbf{h-influence} of
the $k$-th coordinate on $f$ is
\[
I_f^h(k) = \mathbb{E}_{x \in X} h(\mathbb{E}f_k^x).
\]
\label{Def:h-influence}
\end{definition}
Definition~\ref{Def:BKKKL} is obtained by substituting $h(t)=1$ if
$t \neq 0,1$, and $h(t)=0$ otherwise.\footnote{Here we assume that
a function is constant on a fiber if it is constant except for a
set of measure zero.} Definition~\ref{Def:Tilde} is obtained by
substituting $h(t)=t(1-t)$.\footnote{We note that influences
toward zero and one (see, e.g.,~\cite{Friedgut2}), can be also
expressed as $h$-influences. Influence toward one is obtained by
$h(t)=1-t$ for $t \neq 0$, and $h(0)=0$. Influence toward zero is
obtained by $h(t)=t$ for $t \neq 1$, and $h(1)=0$.}

\medskip

\noindent We show that the BKKKL theorem can be generalized by
replacing the influence $I_k(f)$ with a smaller $h$-influence, and
obtain a full characterization of the $h$-influences for which the
BKKKL theorem holds. As in~\cite{BKKKL}, we prove our
results in the case $X=[0,1]^n$, endowed with the Lebesgue
measure $\lambda$. This case is considered fairly general
(see~\cite{BKKKL,Hatami}), since many cases of interest can be easily
reduced to it.
\begin{theorem}
Denote the Entropy function $H(t)=-t\log t -(1-t)\log(1-t)$ by
$Ent(t)$. Let $h:[0,1] \rightarrow [0,1]$ be a concave
function, such that $h(t) \geq Ent(t)$ for all $0 \leq t \leq 1$.
Then for every $f:[0,1]^n \rightarrow \{0,1\}$ with
$\mathbb{E}f=p$, there exists $1\leq k \leq n$ such that the
h-influence of the $k$-th variable on $f$ satisfies
\[
I_f^h(k) \geq cp(1-p)\log n/n,
\]
where $c$ is a universal constant.
\label{Our-Main-Theorem}
\end{theorem}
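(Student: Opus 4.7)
The concavity of $h$ and the assumption $h \geq \mathrm{Ent}$ are used only to reduce to the case $h = H$: since $h(t) \geq H(t)$ for every $t$, one has $I_f^h(k) \geq I_f^H(k)$ pointwise, so any lower bound on $\max_k I_f^H(k)$ transfers to $\max_k I_f^h(k)$. From now on I take $h = H$.

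My proposal is to deduce the theorem from the entropy-weighted Talagrand-type $L_1$--$L_2$ inequality
\[
\mathrm{Var}(f) \leq C \sum_{k=1}^n \frac{I_f^H(k)}{\log\bigl(e/I_f^H(k)\bigr)}, \qquad f:[0,1]^n \to \{0,1\}. \quad (\star)
\]
A check on the tribes-like example in the paper shows that $(\star)$ is tight up to constants, whereas the analogous inequality with the smaller $\tilde{I}_k$ of Definition~\ref{Def:Tilde} fails. Given $(\star)$, the theorem is immediate by the standard extraction argument: if $I_f^H(k) < \delta$ for all $k$, then $p(1-p) \leq Cn\delta/\log(e/\delta)$, and substituting $\delta = cp(1-p)\log n/n$ yields a contradiction for $c$ small enough.

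To prove $(\star)$ I would follow the BKKKL discretization template. Expand each coordinate $x_k$ in binary as $x_k = \sum_{j \geq 1} B_{k,j} 2^{-j}$, truncate to the first $M$ bits, and integrate out the remainder of length $2^{-M}$; this produces a function $\bar{f}_M$ on $\{0,1\}^{nM}$ with $\mathrm{Var}(\bar{f}_M) \to \mathrm{Var}(f)$ as $M \to \infty$. Applying the classical Boolean Talagrand inequality gives
\[
\mathrm{Var}(\bar{f}_M) \leq C \sum_{k,j} \frac{I_{(k,j)}(\bar{f}_M)}{\log(e/I_{(k,j)}(\bar{f}_M))}.
\]
It remains to aggregate over $j$ for each fixed $k$ and show that the inner sum is at most $C\, I_f^H(k)/\log(e/I_f^H(k))$. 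The termwise bound $I_{(k,j)}(\bar{f}_M) \leq I_f^H(k)$ follows from the information-theoretic identity $I_{(k,j)}(\bar{f}_M) = \mathbb{E}[H(F \mid X_{-k}, B_{k,\ne j})]$ combined with the monotonicity of entropy under conditioning, and this lets one replace the logarithmic denominator by $\log(e/I_f^H(k))$.

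The main obstacle is the remaining aggregate bound $\sum_j I_{(k,j)}(\bar{f}_M) \leq C\, I_f^H(k)$. The naive pointwise bound loses a factor of $M$, while the mutual-information chain rule $I_f^H(k) = \sum_j I(F; B_{k,j} \mid X_{-k}, B_{k,<j})$ gives the correct total but expresses it through conditional mutual informations against prior bits rather than through conditional entropies given all \emph{other} bits. Reconciling these two quantities requires exploiting the hierarchical dyadic structure of the binary expansion---the bit $B_{k,j}$ resolves $x_k$ at scale $2^{-j}$, and the fiber function $f_k^{x_{-k}}$ is the binary representation of a Boolean function of a single real variable, not an arbitrary function of $M$ bits. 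A telescoping argument along the dyadic scales, combined with the entropic identities above, is what I would use to close this loop. Once $(\star)$ is established, letting $M \to \infty$ and invoking dominated convergence completes the proof.
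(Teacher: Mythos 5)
Your top-level plan (reduce to $h=Ent$, prove a Talagrand-type bound $\mathrm{Var}(f)\leq C\sum_k I_f^H(k)/\log(e/I_f^H(k))$, and extract the $\log n/n$ bound) is legitimate, and the reduction to $h=Ent$ is harmless since $Ent$ is itself concave; this route essentially reproves Proposition~\ref{Prop:Our-Talagrand-General} and deduces the theorem from it, whereas the paper goes through Theorem~\ref{Thm:Generalized-KKL} directly. However, there is a genuine gap at exactly the point you flag as ``the main obstacle,'' and it cannot be closed by a telescoping argument, because the inequality you need is false for general $f$. The aggregate bound
\[
\sum_{j} I_{(k,j)}(\bar f_M) \;\leq\; C\, I_f^H(k)
\]
fails without a monotonicity assumption: take $f$ depending only on $x_1$, with $f$ equal on each dyadic interval of length $2^{-M}$ to the parity of the first $M$ binary digits of $x_1$. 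Then every fiber restriction has expectation $1/2$, so $I_f^H(1)=Ent(1/2)=1$, while each bit $(1,j)$ is pivotal with probability $1$, so $\sum_j I_{(1,j)}(\bar f_M)=M$. The same example shows that the bit-level influences cannot be controlled by the entropy influence at any fixed scale, so the loss of the factor $M$ in your scheme is intrinsic, not an artifact of the naive pointwise bound, and it blows up as $M\to\infty$. (Your termwise identity $I_{(k,j)}(\bar f_M)=\mathbb{E}\,H(F\mid \hbox{all other bits})$ and the bound $I_{(k,j)}\leq I_f^H(k)$ are fine, but they are not strong enough.)

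The missing ingredient is the paper's monotonization step, Theorem~\ref{Thm:Monotonization}: one first replaces $f$ by a monotone function with the same expectation and no larger $h$-influences, and this is where the concavity of $h$ is genuinely used (via the column-shifting/convexity argument), not merely to reduce to $h=Ent$ as you assert. Only after monotonization does the dyadic discretization behave: each fiber restriction is then the indicator of an interval $(1-t_0,1]$, and an explicit computation over the binary digits gives $\sum_j I_{g_i^x}(i_j)\leq 6\,Ent(t_0)$, which is Proposition~\ref{Prop:Discretization} and is precisely the aggregate bound you need (note it is proved fiberwise, with the entropy appearing from the geometric decay $2^{j+1-l}$ of the low bits plus the $2(1-t_0)$ contribution of the high bits). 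So to repair your proposal you must insert the monotonization theorem before discretizing; with it, your Talagrand-route and the paper's KKL-route both go through.
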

The advantage of Theorem~\ref{Our-Main-Theorem} over the BKKKL
theorem is demonstrated by the function $f(x)=1
\Longleftrightarrow (x_i>1/n, \forall 1\leq i\leq n)$, presented
above. For this function we have $\mathbb{E}(f) \approx 1/e$, and
for all $k$, $I_k(f) \approx 1/e$, and hence the BKKKL theorem is
far from being tight in this case. On the other hand, for
$h(t)=Ent(t)$ we get $I_k(f) \approx Ent(1/n)/e \approx c' \log
n/n$, and thus Theorem~\ref{Our-Main-Theorem} is tight in this
case.

\medskip

\noindent Furthermore, by examining variants of the {\it tribes}
function presented in~\cite{BL}, we show that
Theorem~\ref{Our-Main-Theorem} is tight in the following sense:
\begin{proposition}
Let $h:[0,1] \rightarrow [0,1]$ and let $\epsilon>0$. If there
exists $0<q<1$ such that $h(q) \leq \epsilon Ent(q)$, then there
exists a function $f:[0,1]^n \rightarrow \{0,1\}$ such that
$\mathbb{E}f=\Theta(1)$, and for all $1\leq k \leq n$, the
h-influence of the $k$-th variable on $f$ satisfies $I_f^h(k) \leq
c \epsilon \log n/n$, where $c$ is a universal constant.
\end{proposition}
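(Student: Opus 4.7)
The plan is to use a variant of the Ben-Or--Linial tribes function from~\cite{BL}, with tribe size calibrated to the given $q$. A short duality argument first lets me reduce to $q \leq 1/2$: since $Ent(1-q) = Ent(q)$, the function $\tilde h(t) := h(1-t)$ satisfies $\tilde h(1-q) \leq \epsilon Ent(1-q)$, and if $g$ is constructed for the pair $(\tilde h, 1-q)$ then $f := 1-g$ satisfies $I_f^h(k) = I_g^{\tilde h}(k)$ and $\mathbb{E} f = \Theta(1)$. I therefore assume $q \leq 1/2$.

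I would then choose tribe size $w = \Theta(\log n/\log(1/q))$ so that $q^w = \Theta(w/n)$, partition $\{1, \ldots, n\}$ into $m = \lfloor n/w \rfloor$ disjoint tribes $T_1, \ldots, T_m$ of size $w$ each, and define
\[
f(x) = 1 \iff \exists j,\ \forall i \in T_j,\ x_i \leq q.
\]
Independence of the tribes together with $mq^w = \Theta(1)$ then gives $\mathbb{E} f = 1 - (1-q^w)^m = \Theta(1)$.

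To bound $I_f^h(k)$, I would fix $k \in T_{j_0}$ and condition on $(x_i)_{i \neq k}$. The conditional expectation $\phi_k(x) := \mathbb{E} f_k^x$ takes only three values: $1$, if some other tribe is fully satisfied by $(x_i)_{i\neq k}$; $q$, if no other tribe is satisfied but every $x_i$ with $i \in T_{j_0}\setminus\{k\}$ satisfies $x_i \leq q$; and $0$, otherwise. Under the natural normalization $h(0)=h(1)=0$ (implicit in the proposition, since otherwise constant functions already have positive $h$-influence), this reduces $I_f^h(k)$ to $\Pr[\phi_k = q]\,h(q) \leq q^{w-1}\,h(q)$.

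Combining $q^{w-1} \leq 2w/(nq)$, the hypothesis $h(q) \leq \epsilon Ent(q)$, the elementary bound $Ent(q) \leq 2q\log(1/q)$ valid for $q \leq 1/2$, and $w \log(1/q) = \log(n/w)$ from $q^w = w/n$, a short calculation yields $I_f^h(k) = O(\epsilon \log n/n)$, as desired. The main subtlety is the choice of $w$: it must simultaneously satisfy $mq^w = \Theta(1)$ (so $\mathbb{E} f$ is bounded away from $0$ and $1$) and $w \log(1/q) = \Theta(\log n)$ (so the final bound is $O(\epsilon \log n/n)$), and the defining equation $q^w = w/n$ achieves both on $(0, 1/2]$.
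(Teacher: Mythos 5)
Your proposal is correct and follows essentially the same route as the paper: a biased tribes function with tribe size $\Theta(\log n/\log(1/q))$ calibrated so that $q^{w}\approx w/n$, the observation that non-constant fibers have conditional expectation exactly $q$ so each influence is at most $q^{w-1}h(q)$, the bound $Ent(q)\leq 2q\log(1/q)$ for $q\leq 1/2$, and a duality argument for $q>1/2$. The only differences are cosmetic (you work directly on $[0,1]^n$ rather than pulling back from $(\{0,1\}^n,\mu_q)$, and you make explicit the normalization $h(0)=h(1)=0$ that the paper leaves implicit), and your treatment of the integrality of the tribe size is at the same level of rigor as the paper's.
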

The proof of Theorem~\ref{Our-Main-Theorem} for monotone functions
is simple, and essentially follows the proof of the BKKKL theorem.
The reduction from general functions to monotone functions is much
more complicated in our case, and involves discretization and
convexity arguments. We note that in the special case of functions
on the discrete cube endowed with a product measure
$\mu=\mu_q^{\otimes n}$, statements similar to
Theorem~\ref{Our-Main-Theorem} were proved by Friedgut and
Kalai~(\cite{Friedgut-Kalai}, Theorem~3.1), and independently by
Talagrand~\cite{Talagrand1}.

\medskip

\noindent Using the techniques we develop in the proof of Theorem~\ref{Our-Main-Theorem}, 
we provide generalizations of several known results on influences in product
spaces. In particular, we generalize a lower bound on the vector
of influences obtained by Talagrand~\cite{Talagrand1} for
functions on the discrete cube endowed with the product measure
$\mu_q^{\otimes n}$. We obtain the following:
\begin{proposition}
Let $h:[0,1] \rightarrow [0,1]$ be a concave function
satisfying $h(t) \geq Ent(t)$ for all $0 \leq t \leq 1$. There
exists a constant $K>0$ such that for any function $f:[0,1]^n
\rightarrow \{0,1\}$ with $\mathbb{E}f=p$,
\[
p(1-p) \leq K \sum_{i \leq n}
\frac{I^h_f(i)}{\log\frac{4}{3I^h_f(i)}}.
\]
\end{proposition}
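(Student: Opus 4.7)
The plan is to adapt Talagrand's proof of his original discrete-cube inequality (\cite{Talagrand1}) to the continuous setting, using the same reduction-and-discretization framework developed for Theorem~\ref{Our-Main-Theorem}. Talagrand's argument couples a Poincar\'e-type bound with hypercontractivity applied level by level, the logarithmic factor $\log(4/(3\cdot))$ emerging by optimizing over Fourier levels. The task is to transfer this argument through the dyadic expansion of $[0,1]$ while replacing discrete influences by $h$-influences for a concave $h\geq Ent$.

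First I would reduce to monotone $f$. The monotonization step that the paper announces for Theorem~\ref{Our-Main-Theorem} depends only on the concavity of $h$ and on convexity of certain conditional averages, not on the tight constant, so the same argument should produce a monotone $\tilde f$ with $\mathbb{E}\tilde f = p$ and $I^h_{\tilde f}(i)\leq I^h_f(i)$ for every $i$. Since $x\mapsto x/\log(4/(3x))$ is increasing on $(0,1]$, the bound for $\tilde f$ implies the bound for $f$.

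For monotone $f$, I would then perform the BKKKL dyadic expansion: write each coordinate $x_i\in[0,1]$ via its binary digits $b_{i,1},b_{i,2},\ldots$, truncate at depth $L$, and obtain a monotone $f_L:\{0,1\}^{nL}\to\{0,1\}$ with $\mathbb{E}f_L=p$. Apply Talagrand's discrete bound to $f_L$:
\[
p(1-p)\leq K'\sum_{i=1}^{n}\sum_{\ell=1}^{L}\frac{I_{f_L}(i,\ell)}{\log\bigl(4/(3\,I_{f_L}(i,\ell))\bigr)}.
\]
The remaining task is, for each original coordinate $i$, to compare the inner sum over $\ell$ to the single quantity $I^h_f(i)/\log(4/(3\,I^h_f(i)))$, and then let $L\to\infty$.

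The main obstacle will be precisely this last comparison step, which is also where the hypothesis $h\geq Ent$ is genuinely used. For monotone $f$, the discrete influence $I_{f_L}(i,\ell)$ measures a shell of width $2^{-\ell}$ in direction $i$, and the weighted telescoping sum $\sum_\ell 2^{-\ell}\cdot(\text{shell})$ reproduces $\mathbb{E}_x\,Ent(\mathbb{E}f_k^x)$—the same identity that makes Theorem~\ref{Our-Main-Theorem} tight. Using $h\geq Ent$ together with the concavity of $x\mapsto x/\log(4/(3x))$ on $(0,1]$, Jensen's inequality applied to the inner sum (after reparametrization so that the level weights form a probability measure) should yield the desired single-term bound with a universal $K$. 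The delicate point is keeping the logarithmic weights aligned across levels; if a direct Jensen step loses a $\log$ factor, the fallback would be to perform Talagrand's Poincar\'e-versus-hypercontractivity dichotomy directly on the dyadic expansion, choosing at each level whether to estimate $I_{f_L}(i,\ell)$ trivially by the $L^2$-norm of the corresponding dyadic martingale increment or via the $(2,4/3)$-hypercontractive inequality, and then collecting the resulting contributions into the entropy-type expression that controls $I^h_f(i)$.
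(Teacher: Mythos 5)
Your overall route is the same as the paper's: monotonize (Theorem~\ref{Thm:Monotonization}), discretize each coordinate by its binary expansion so that Proposition~\ref{Prop:Discretization} gives $\sum_{\ell} I_{f_L}(i,\ell)\le 6\,I^h_f(i)$ (this, not a ``weighted telescoping sum $\sum_\ell 2^{-\ell}\cdot(\text{shell})$'', is the precise point where $h\ge Ent$ enters -- per fiber the \emph{unweighted} sum of bit influences is $O\bigl((1-t_0)\log\frac{1}{1-t_0}\bigr)$), apply Talagrand's theorem at $q=1/2$ to the discrete function, and then aggregate the $l$ bit-terms of each original coordinate into a single term. The gap is in that last aggregation step. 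You propose to use ``concavity of $x\mapsto x/\log(4/(3x))$ on $(0,1]$'' together with Jensen over the levels; but this function is \emph{convex}, not concave, there: writing $\varphi(x)=x/\ln(c/x)$ with $L(x)=\ln(c/x)$ one computes $\varphi''(x)=\frac{L+2}{x\,L^3}>0$ for $0<x<c$. Moreover, Jensen with the uniform probability weights over the $L$ levels would assert $\sum_\ell\varphi(a_\ell)\le L\,\varphi\bigl(\tfrac1L\sum_\ell a_\ell\bigr)=\frac{\sum_\ell a_\ell}{\log\bigl(4L/(3\sum_\ell a_\ell)\bigr)}$, a quantity that tends to $0$ as $L\to\infty$, so the inequality you want from ``concavity'' is simply false, and the step as written fails.

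The repair is easy and is exactly what the paper does, so no new hypercontractive argument (your fallback) is needed. Since each $I_{f_L}(i,\ell)\le\sum_{\ell'}I_{f_L}(i,\ell')$, you may bound every denominator from below by the one with the full sum; equivalently, $\varphi$ is convex, increasing, and $\varphi(0)=0$, hence \emph{superadditive}, so
\[
\sum_{\ell}\frac{I_{f_L}(i,\ell)}{\log\frac{8}{I_{f_L}(i,\ell)}}\;\le\;\frac{\sum_{\ell}I_{f_L}(i,\ell)}{\log\frac{8}{\sum_{\ell}I_{f_L}(i,\ell)}}\;\le\;\frac{6\,I^h_f(i)}{\log\frac{8}{6\,I^h_f(i)}},
\]
where the last step uses monotonicity of $x\mapsto x/\log(8/x)$ on $[0,8)$ and $\sum_\ell I_{f_L}(i,\ell)\le 6 I^h_f(i)\le 6$. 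Note the switch from $\log(4/x)$ to $\log(8/x)$ (at the cost of a factor $2$, valid since the influences are at most $2$): it keeps the logarithms positive so the denominator comparison is legitimate, a point your write-up also leaves unaddressed. With this replacement of your Jensen step, the rest of your outline (monotonization, dyadic discretization, Talagrand at $q=1/2$, absorbing constants into $K$) matches the paper's proof.
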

Our proof uses the proof of Talagrand's result for the case
$q=1/2$, along with the monotonization and discretization
technique for $h$-influences presented in our paper. Since
Talagrand's result (for a general $q$) follows easily from our
generalization, our technique can replace the
major part of Talagrand's proof (containing a proof of a biased
version of Beckner's hypercontractive inequality~\cite{Beckner}).

\medskip

\noindent This paper is organized as follows: In
Section~\ref{sec:lemmas} we present the monotonization and
discretization techniques for $h$-influences, which are the main
tools used in the paper. The proof of
Theorem~\ref{Our-Main-Theorem} and its applications are presented
in Section~\ref{sec:applications}. In Section~\ref{sec:Tightness} 
we discuss the tightness of our results.

\section{Monotonization and Discretization for $h$-Influences}
\label{sec:lemmas}

In this section we generalize to $h$-influences two central steps
of the proof of the BKKKL theorem. The first step is {\it
monotonization}, a shifting technique allowing to replace a
Boolean function on the continuous cube by a monotone function
with the same expectation and non-higher influences. While for
Definition~\ref{Def:BKKKL} of the influences, the shifting
argument is standard and easy, for $h$-influences the proof is
much more complicated. We present this proof, involving
discretization and a delicate argument exploiting the concavity of
the function $h$, in Section~\ref{sec:sub:monotonization}. The
second step is {\it discretization}, allowing to replace a
monotone function $f:[0,1]^n \rightarrow \{0,1\}$ by a function
$g:\{0,1\}^m \rightarrow \{0,1\}$, where $m=\Theta(n \log n)$,
having the following property: The $m$ coordinates can be divided
into $n$ equal sets of size $\Theta(\log n)$ each, such that the sum of
influences of the coordinates in each set on $g$ is not much
higher than the influence of the corresponding coordinate on $f$.
We show that the same can be performed with $h$-influences, given
that for all $t$, we have $h(t) \geq Ent(t)$. The proof of this
claim, presented in Section~\ref{sec:sub:discretization}, is quite
straightforward, and is essentially the same as the proof of Claim
2.8 in~\cite{Friedgut2}.

\subsection{Monotonization for $h$-Influences}
\label{sec:sub:monotonization}

\begin{definition}
A function $f:[0,1]^n \rightarrow \mathbb{R}$ is monotone if for
all $x,y \in [0,1]^n$,
\[
\forall i (x_i \geq y_i) \Longrightarrow f(x) \geq f(y).
\]
\end{definition}

\noindent An important component in the proofs of the KKL
theorem~\cite{KKL}, the BKKKL theorem~\cite{BKKKL} and many other
results is {\it monotonization}. Using the monotonization
technique, which is actually a standard shifting argument
(see~\cite{Frankl}), the examined function is replaced by a
monotone function with the same expectation and non-higher
influences. The rest of the proof is performed for the monotone
function, and the resulting lower bound on its influences yields a
lower bound on the influences of the original function.

\noindent In this section we prove that the monotonization
technique is valid also for $h$-influences, if $h$ is a concave
function. The condition imposed on $h$ holds for all
the ``natural'' definitions of influences in the continuous case,
except for Definition~\ref{Def:BKKKL}. However, the validity of
the monotonization for this definition was already proved
in~(\cite{BKKKL}, Lemma 1).
\begin{theorem}
Let $h:[0,1] \rightarrow [0,1]$ be a concave function.
For every Borel measurable function $f:[0,1]^n \rightarrow
\{0,1\}$, there exists a monotone function $g:[0,1]^n \rightarrow
\{0,1\}$, such that:
\begin{enumerate}
\item $\mathbb{E}f=\mathbb{E}g$.

\item For all $1\leq k \leq n$, we have $I^h_f(k) \geq I^h_g(k)$.
\end{enumerate}
\label{Thm:Monotonization}
\end{theorem}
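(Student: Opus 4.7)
The plan is to use the standard shifting (monotone rearrangement) construction, one coordinate at a time. For $1 \le k \le n$ and Borel $f : [0,1]^n \to \{0,1\}$, set
\[
p_k(x) \;=\; \int_0^1 f(x_1,\ldots,x_{k-1},t,x_{k+1},\ldots,x_n)\,dt \;=\; \mathbb{E}f_k^x,
\]
and define
\[
(S_k f)(x) \;=\; \mathbf{1}_{\{x_k \,\ge\, 1 - p_k(x)\}}.
\]
On each $k$-fiber, $S_k f$ is the monotone indicator with the same mass as $f$; by Fubini, $p_k$ and hence $S_k f$ are Borel measurable. I propose to set $g = S_n \circ S_{n-1} \circ \cdots \circ S_1 f$. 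A single pass suffices because monotonicity in direction $j$ propagates through later $S_k$: if $f$ is non-decreasing in $x_j$ with $j \ne k$, then $p_k$ is non-decreasing in $x_j$, so the threshold $1 - p_k$ is non-increasing in $x_j$, making $S_k f$ non-decreasing in $x_j$. Expectation is preserved because $S_k f$ has the same mass as $f$ on every $k$-fiber, and $I^h_{S_k f}(k) = I^h_f(k)$ because $\mathbb{E}(S_k f)_k^x = \mathbb{E} f_k^x = p_k(x)$ almost everywhere.

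The entire difficulty of the theorem is concentrated in the remaining claim: $I^h_{S_k f}(j) \le I^h_f(j)$ for every $j \ne k$. Freezing the coordinates outside $\{j,k\}$ and integrating, this reduces to the following 2D statement. For any Borel $F : [0,1]^2 \to \{0,1\}$, with $q(a) = \int_0^1 F(t,a)\,dt$, $p(t) = \int_0^1 F(t,a)\,da$, and $r(a) = |\{t : p(t) \ge 1-a\}| = \int_0^1 (S_k F)(t,a)\,dt$, one needs
\[
\int_0^1 h(r(a))\,da \;\le\; \int_0^1 h(q(a))\,da.
\]
The approach I would take is to show that $r$ majorizes $q$ in the Hardy--Littlewood--P\'olya sense and then invoke Karamata's inequality, whose concave form yields $\int h(r) \le \int h(q)$. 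The two ingredients of the majorization are equality of total masses, $\int_0^1 q\,da = \int_0^1 r\,da = \int_0^1\!\int_0^1 F\,dt\,da$, and the top-$\lambda$-mass comparison for each $\lambda \in [0,1]$. Since $r$ is non-decreasing, its top-$\lambda$ mass equals $\int_{1-\lambda}^1 r(a)\,da$, which by Fubini equals $\int_0^1 \min(\lambda,\,p(t))\,dt$; and for any measurable $A \subset [0,1]$ with $|A| = \lambda$,
\[
\int_A q(a)\,da \;=\; \int_0^1 |\{a \in A : F(t,a) = 1\}|\,dt \;\le\; \int_0^1 \min(\lambda,\,p(t))\,dt,
\]
which is the required bound.

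The main obstacle, as anticipated in Section~\ref{sec:lemmas}, is this 2D step: linking the $h$-influence in direction $j$ to a rearrangement inequality and exploiting the concavity of $h$ at the right moment. If one prefers to avoid the continuous form of Karamata's inequality, the same conclusion can be reached by a discretization argument: approximate $F$ by a $\{0,1\}$-valued matrix on an $N \times N$ grid, observe that each elementary anti-monotone swap $(F_{ia}, F_{ib}) = (1,0) \mapsto (0,1)$ with $a < b$ is a Pigou--Dalton transfer on the column-sum vector and hence decreases $\sum_j h(q_j)$ for concave $h$, and pass to the limit $N \to \infty$. This is presumably what the introduction to Section~\ref{sec:lemmas} refers to as ``discretization and a delicate argument exploiting the concavity of $h$''. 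All remaining items --- Borel measurability, expectation preservation, and propagation of monotonicity through the single pass $S_n \cdots S_1$ --- are routine bookkeeping.
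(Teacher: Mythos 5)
Your proposal is correct, and while its outer structure (the coordinate-wise shift $S_k$, preservation of the expectation and of the $k$-th $h$-influence, single-pass monotonicity, and the reduction to a two-dimensional inequality by freezing the other coordinates) matches the paper's, the crux is handled by a genuinely different argument. The paper proves the two-dimensional inequality first for functions constant on the cells of an $r\times r$ grid, decomposing the shift into column permutations and moves of a single `1' from a smaller column to a larger one, each of which does not increase the sum of $h$-values by concavity, and then transfers the result to general Borel $f$ via dyadic approximation (Corollary~\ref{Cor3.1}), the $L^1$-contractivity of the shift (Claim~\ref{Claim3.1}), and continuity of $h$. You instead observe that the post-shift fiber-average profile $r$ majorizes the original profile $q$ in the Hardy--Littlewood--P\'olya sense --- your Fubini computation $\int_{1-\lambda}^1 r(a)\,da=\int_0^1\min(\lambda,p(t))\,dt$ and the bound $\int_A q(a)\,da\le\int_0^1\min(\lambda,p(t))\,dt$ are both correct --- so $\int h(r)\le\int h(q)$ follows from the continuous Karamata/HLP inequality, with no discretization or limiting argument at all; this is cleaner and works directly for Borel $F$. (Your alternative route, Pigou--Dalton transfers on an $N\times N$ grid followed by a limit, is essentially the paper's proof, with the limiting step --- the part the paper spends Corollary~\ref{Cor3.1} and Claim~\ref{Claim3.1} on --- left as a sketch.) The only caveat, which the paper shares (its ``concave, hence continuous'' claim is accurate only on the open interval), is that the standard continuous Karamata/HLP theorem assumes $h$ continuous, whereas a concave $h:[0,1]\to[0,1]$ may jump down at $t=0,1$; in your framework this is easily repaired, since the endpoint contributions are controlled by $|\{r=0\}|\ge|\{q=0\}|$ and $|\{r=1\}|\ge|\{q=1\}|$, which follow from the fact that the essential supremum (resp.\ essential infimum) of $p$ is at most $1-|\{q=0\}|$ (resp.\ at least $|\{q=1\}|$).
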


\begin{proof}
\noindent The proof of Theorem~\ref{Thm:Monotonization} is
composed of three steps.

\subsubsection{Reduction to Functions on the Unit Square}

The procedure of the monotonization, that allows to arrive to $g$
given $f$, is the same standard shifting procedure as presented
in~\cite{BKKKL}. The monotonization is performed coordinate-wise.
For the $k$-th coordinate, we consider the fibers in the $k$-th
direction, and for each fiber $s_k(x)$, we replace the function
$f_k^x$ by the function
\[
Mf_k^x (t) = \left\lbrace
  \begin{array}{c l}
    1, & t>1-\mathbb{E}(f_k^x)\\
    0, & otherwise.
  \end{array}
\right.
\]
Clearly, the resulting function $Mf$ satisfies
$\mathbb{E}(Mf)=\mathbb{E}(f)$. Moreover, it is clear that
performing this monotonization procedure in all the coordinates
sequentially leads to a monotone function. Hence, the only claim
we have to prove is that $I^h_f(j) \geq I^h_{Mf}(j)$ for all
$1\leq j \leq n$, where $Mf$ denotes the monotonization of $f$ in
the $k$-th direction.

\medskip

\noindent For $j=k$ we have $I^h_f(j)=I^h_{Mf}(j)$, since the
expectation of $f_k^x$ on each fiber is preserved. Hence, we
assume that $j \neq k$. It is sufficient to show that for every
fixed value of the $n-2$ remaining coordinates, the contribution
of the family of fibers corresponding to this fixed value to
$I^h_f(j)$ is not less than the respective contribution to
$I^h_{Mf}(j)$. Therefore, without loss of generality we can fix
the remaining coordinates and thus assume that $n=2$, i.e.,
$f:[0,1]^2 \rightarrow \{0,1\}$.

\subsubsection{Proof of a Discrete Version of
Theorem~\ref{Thm:Monotonization}}

\noindent Let $r$ be an integer. We assume that the value of $f$
is constant on squares of the form $(\frac{l}{r},\frac{l+1}{r})
\times (\frac{m}{r},\frac{m+1}{r})$, for all $0 \leq l,m < r$.
Hence, $f$ can be represented by an $r \times r$ matrix $A$, where
$A[i,j]$ denotes the value of $f(x,y)$ for all $(x,y) \in
(\frac{i-1}{r},\frac{i}{r}) \times (\frac{j-1}{r},\frac{j}{r})$.
The monotonization (assuming, without loss of generality, that it
is performed according to the rows of the matrix) consists of
moving all the $1$-s in each row to the rightmost positions in the
same row, as described in Figure~\ref{Figure1}. 

\begin{figure}[htb]
\begin{center}
%\scalebox{1.0}{ % Change scalebox to have different %size.
   \begin{pspicture}(0,-0.1)(10,3.5)
%% Grid

\PutGrid{6.5}{0.2}{0.5} \PutGrid{0.5}{0.2}{0.5}

% Text
\uput[r](0.2,-0.1){The Original Matrix} \uput[r](6.2,-0.1){The
Shifted Matrix}

% Arrow
\cnode(4.0,1.7){0.01}{B0} \cnode(6.0,1.7){0.01}{B1}
\ncline[arrows=->]{B0}{B1}
%\aput[0.05]{:U}{\footnotesize{Monotonization}}
%\bput[0.05]{:U}{\footnotesize{Below Arrow}}

% X's

% Left matrix

%% 1st row from the bottom - y coordinate 0.45
%% 1st column: 0.45 for x coordinate, 0.95 for second column,
%% +0.5 for every other column, etc.

\uput[r](0.45,0.45){1} \uput[r](0.95,0.45){0}
\uput[r](1.45,0.45){1} \uput[r](1.95,0.45){1}
\uput[r](2.45,0.45){0} \uput[r](2.95,0.45){1}

\uput[r](0.45,0.95){0} \uput[r](0.95,0.95){1}
\uput[r](1.45,0.95){1} \uput[r](1.95,0.95){0}
\uput[r](2.45,0.95){0} \uput[r](2.95,0.95){0}

\uput[r](0.45,1.45){0} \uput[r](0.95,1.45){0}
\uput[r](1.45,1.45){0} \uput[r](1.95,1.45){1}
\uput[r](2.45,1.45){1} \uput[r](2.95,1.45){1}

\uput[r](0.45,1.95){0} \uput[r](0.95,1.95){0}
\uput[r](1.45,1.95){0} \uput[r](1.95,1.95){0}
\uput[r](2.45,1.95){1} \uput[r](2.95,1.95){0}

\uput[r](0.45,2.45){0} \uput[r](0.95,2.45){1}
\uput[r](1.45,2.45){1} \uput[r](1.95,2.45){0}
\uput[r](2.45,2.45){1} \uput[r](2.95,2.45){1}

\uput[r](0.45,2.95){1} \uput[r](0.95,2.95){0}
\uput[r](1.45,2.95){0} \uput[r](1.95,2.95){0}
\uput[r](2.45,2.95){0} \uput[r](2.95,2.95){1}

%% Right Matrix

\uput[r](6.45,0.45){0} \uput[r](6.95,0.45){0}
\uput[r](7.45,0.45){1} \uput[r](7.95,0.45){1}
\uput[r](8.45,0.45){1} \uput[r](8.95,0.45){1}

\uput[r](6.45,0.95){0} \uput[r](6.95,0.95){0}
\uput[r](7.45,0.95){0} \uput[r](7.95,0.95){0}
\uput[r](8.45,0.95){1} \uput[r](8.95,0.95){1}

\uput[r](6.45,1.45){0} \uput[r](6.95,1.45){0}
\uput[r](7.45,1.45){0} \uput[r](7.95,1.45){1}
\uput[r](8.45,1.45){1} \uput[r](8.95,1.45){1}

\uput[r](6.45,1.95){0} \uput[r](6.95,1.95){0}
\uput[r](7.45,1.95){0} \uput[r](7.95,1.95){0}
\uput[r](8.45,1.95){0} \uput[r](8.95,1.95){1}

\uput[r](6.45,2.45){0} \uput[r](6.95,2.45){0}
\uput[r](7.45,2.45){1} \uput[r](7.95,2.45){1}
\uput[r](8.45,2.45){1} \uput[r](8.95,2.45){1}

\uput[r](6.45,2.95){0} \uput[r](6.95,2.95){0}
\uput[r](7.45,2.95){0} \uput[r](7.95,2.95){0}
\uput[r](8.45,2.95){1} \uput[r](8.95,2.95){1}
\end{pspicture}
%}
\end{center}
\caption{The Matrices Representing $f$ and $Mf$}
\label{Figure1}
\end{figure}
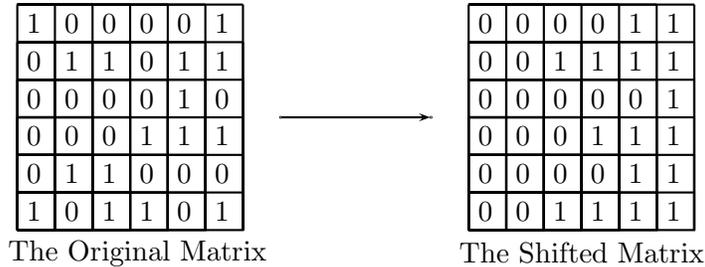

\noindent The influence in the ``columns'' direction is
\[
I_f^h(\mbox{ columns }) = \frac{1}{r} \sum_{j=1}^r h
\Big(\frac{\sum_{i=1}^r A[i,j]}{r}\Big).
\]
We show that if $h$ is concave, the influence is not increased by
our monotonization. We use two simple observations:
\begin{enumerate}
\item If two entire columns are interchanged, the influence does
not change.

\item If a `1' is moved from a column with a smaller (or equal)
number of $1$-s to a column with a bigger (or equal) number, then
the influence is not increased.
\end{enumerate}

\noindent The first observation is clear. The second observation
follows from the concavity of $h$. Indeed, denote the numbers of
1-s in the columns before the operation by $a$ and $b$, where $b
\geq a$. Since the contribution of all the untouched columns to
the influence remains unchanged, we want to prove that
\[
h(\frac{b}{r})+h(\frac{a}{r}) \geq
h(\frac{b+1}{r})+h(\frac{a-1}{r}).
\]
There exists $0<t<1$ such that $b/r=t(b+1)/r +(1-t)(a-1)/r$. By
symmetry, we have $a/r=(1-t)(b+1)/r+t(a-1)/r$. By the concavity of
$h$,
\[
h(\frac{b}{r})=h(t\frac{b+1}{r}+(1-t)\frac{a-1}{r}) \geq t
h(\frac{b+1}{r})+ (1-t) h(\frac{a-1}{r}),
\]
and similarly,
\[
h(\frac{a}{r}) \geq (1-t) h(\frac{b+1}{r})+ t h(\frac{a-1}{r}).
\]
Hence,
\[
h(\frac{a}{r})+h(\frac{b}{r}) \geq h(\frac{b+1}{r}) +
h(\frac{a-1}{r}),
\]
as asserted.

\medskip

\noindent The transformation from $f$ to $Mf$ can be treated as a concatenation
of steps of the two classes discussed in the observations above. Indeed, the
transformation can be represented by the following algorithm:

\medskip

\noindent For $i=0,1,2,\ldots,r-1$:
\begin{enumerate}
\item Consider columns $1,2,\ldots,r-i$ of the matrix.

\item Reorder the columns, such that the rightmost column will
have the maximal number of $1$-s.

\item Consider the zero values in the rightmost column (i.e., column $r-i$). For each
such value, look at the corresponding row of the matrix
(restricted to the first $r-i$ columns), and if there exist `1'
values in the row, interchange one of them with the zero value in
the rightmost column.
\end{enumerate}

\noindent For example, in Figure~\ref{Figure1}, the step of the algorithm
corresponding to $i=0$ consists of interchanging the `1' in the
cell $A[3,5]$ with the zero in the cell $A[3,6]$, and
interchanging the `1' in the cell $A[5,3]$ with the zero in the
cell $A[5,6]$. (There is no reordering of the columns since the
rightmost column already has the maximal number of $1$-s). In the
remaining steps of the algorithm, the $r$-th column is left
untouched.

\medskip

\noindent In the output of the algorithm, all the $1$-s in each
row are concentrated in the rightmost cells. Since during the
algorithm, only cells in the same row are interchanged, the
output of the algorithm is the matrix representing $Mf$.
The steps of the algorithm are indeed of the two classes discussed
in the observations above (reordering the columns and moving a `1'
from a column with a smaller (or equal) number of $1$-s to a column with
a bigger (or equal) number of $1$-s). Therefore, the $h$-influence is
not increased by the transformation from $f$ to $Mf$. This
proves the assertion of Theorem~\ref{Thm:Monotonization} for a discrete
version of the function $f$.

\subsubsection{Reduction from the General Case to the Discrete
Case}

\noindent We use the following classical result in Measure theory
(see, for example,~\cite{Bauer}, Theorem~5.7):
\begin{theorem}
Let $\mu$ be a finite measure on a $\sigma$-algebra $A$ of subsets
of a set $\Omega$, which is generated by an algebra $A_0$. Then
for each $C \in A$, there exists a sequence
$\{C_n\}_{n=1}^{\infty}$ such that $C_n \in A_0$ for all $n$, and
\[
\lim_{n \rightarrow \infty} \mu(C_n \triangle C) = 0,
\]
where $X \triangle Y = (X \setminus Y) \cup (Y \setminus X)$ is the
symmetric difference between the sets $X$ and $Y$.
\end{theorem}

\noindent Since the algebra generated by the diadic squares (i.e.,
the squares $(\frac{l}{2^s},\frac{l+1}{2^s}) \times
(\frac{m}{2^s},\frac{m+1}{2^s})$ for all $s$ and all
$0 \leq l,m < 2^s$) generates the Borel $\sigma$-algebra on the
unit square, we get immediately the following corollary:
\begin{corollary}
For every Borel measurable function $f:[0,1]^2 \rightarrow
\{0,1\}$ and every $\epsilon>0$, there exists an integer $s$ and a
function $f_s$ which is constant on squares of the form
$(\frac{l}{2^s},\frac{l+1}{2^s}) \times
(\frac{m}{2^s},\frac{m+1}{2^s})$ for all $0 \leq l,m < 2^s$, such
that
\[
\int_0^1 \int_0^1 | f - f_s | dx dy \leq \epsilon.
\]
\label{Cor3.1}
\end{corollary}

\noindent Let $f:[0,1]^2 \rightarrow \{0,1\}$ be a Borel
measurable function, and let $\epsilon>0$. We want to show that
$I^h_f(2) \geq I^h_{Mf}(2) - \epsilon$, where $Mf$ denotes the
monotonization of $f$ in the first direction (i.e., with respect
to the $x$ coordinate). We approximate $f$
by a ``discrete'' function $f_s$ and use the result on discrete
functions obtained in the previous step of the proof.

\medskip

\noindent Since $h$ is concave on $[0,1]$, and thus continuous, 
there exists $\delta$ such that
if $|x-y| \leq \delta$, then $|h(x)-h(y)| \leq \epsilon/4$. By
Corollary~\ref{Cor3.1}, there exists a function $f_s$ which is
constant on squares of the form $(\frac{l}{2^s},\frac{l+1}{2^s})
\times (\frac{m}{2^s},\frac{m+1}{2^s})$, such that
\begin{equation}
\int_0^1 \int_0^1 |f(x,y) - f_s(x,y)| dx dy \leq \delta \epsilon /4.
\label{Eq3.3.1}
\end{equation}
Let
\[
S_1=\{x \in [0,1]: \int_0^1 |f(x,y)-f_s(x,y)| dy \leq \delta\},
\]
and $S_2 = [0,1] \setminus S_1$. By the Fubini theorem and
Inequality~(\ref{Eq3.3.1}), we have $\lambda(S_2) \leq
\epsilon/4$,
%\[
%\lambda(S_2) = \frac{1}{\delta} \int_{x \in S_2} \delta dx \leq \frac{1}{\delta} \int_{S_2}
%(\int_0^1 |f(x,y)-f_s(x,y)| dy) dx \leq
%\]
%\[
%\leq \frac{1}{\delta} \int_0^1 (\int_0^1 |f(x,y)-f_s(x,y)| dy) dx = \frac{1}{\delta}
%\int_0^1 \int_0^1 |f(x,y) - f_s(x,y)| dx dy \leq \epsilon /4,
%\]
where $\lambda$ is the Lebesgue measure on $\mathbb{R}$. Therefore,
\begin{equation}
\int_{S_2} |h(\int_0^1 f(x,y) dy) - h(\int_0^1 f_s(x,y) dy)| dx \leq \epsilon/4.
\label{Eq3.3.2}
\end{equation}

\medskip

\noindent On the other hand, if $x \in S_1$, then
\[
|\int_0^1 f(x,y) dy - \int_0^1 f_s(x,y) dy| \leq \int_0^1
|f(x,y)-f_s(x,y)| dy \leq \delta,
\]
and hence,
\[
|h(\int_0^1 f(x,y) dy) - h(\int_0^1 f_s(x,y) dy)| \leq \epsilon/4.
\]
Thus,
\begin{equation}
\int_{S_1} |h(\int_0^1 f(x,y) dy) - h(\int_0^1 f_s(x,y) dy)| dx \leq \epsilon/4.
\label{Eq3.3.3}
\end{equation}

\noindent Using the triangle inequality and Inequalities~(\ref{Eq3.3.2}) and~(\ref{Eq3.3.3}),
we get:
\[
|I^h_f(2) - I^h_{f_s}(2)| = |\int_0^1 h(\int_0^1 f(x,y) dy) dx - \int_0^1 h(\int_0^1 f_s(x,y) dy) dx|
\leq
\]
\[
\leq \int_0^1 |h(\int_0^1 f(x,y) dy) - h(\int_0^1 f_s(x,y) dy)| dx =
\int_{S_1} |h(\int_0^1 f(x,y) dy) - h(\int_0^1 f_s(x,y) dy)| dx +
\]
\begin{equation}
+ \int_{S_2} |h(\int_0^1 f(x,y) dy) - h(\int_0^1 f_s(x,y) dy)| dx
\leq \epsilon/4 + \epsilon/4 = \epsilon/2.
\label{Eq3.3.4}
\end{equation}

\medskip

\noindent In order to bound the term $|I^h_{Mf}(2) - I^h_{Mf_s}(2)|$, we use the following
property of the monotonization operation:
\begin{claim}
For every pair of measurable functions $f,g:[0,1]^2 \rightarrow \{0,1\}$,
\[
\int_0^1 \int_0^1 |Mf(x,y) - Mg(x,y)| dx dy \leq \int_0^1 \int_0^1 |f(x,y) - g(x,y)| dx dy,
\]
where $Mf$ and $Mg$ represent the monotonizations of $f$ and $g$ with respect to the $x$
coordinate.
\label{Claim3.1}
\end{claim}
\begin{proof}
Fix $y \in [0,1]$. Denote $t_0= 1-\int_0^1 f(x,y) dx$, and $t_1=1-
\int_0^1 g(x,y) dx$. By the definition of $Mf$ and $Mg$, we have
$Mf(x,y)=0$ if and only if $x<t_0$, and $Mg(x,y)=0$ if and only if
$x<t_1$. Assume w.l.o.g. that $t_0<t_1$. Then
$|Mf(x,y)-Mg(x,y)|=1$ for $t_0 \leq x <t_1$, and
$|Mf(x,y)-Mg(x,y)|=0$ otherwise. Therefore,
\[
\int_0^1 |Mf(x,y)-Mg(x,y)|dx = |(1-\int_0^1 f(x,y) dx) -
(1-\int_0^1 g(x,y) dx)| \leq \int_0^1 |f(x,y) - g(x,y)| dx.
\]
Hence, by the Fubini theorem,
\[
\int_0^1 \int_0^1 |Mf(x,y) - Mg(x,y)| dx dy \leq
%\int_0^1 (\int_0^1 |Mf(x,y) - Mg(x,y)| dy) dx \leq
%\]
%\[
%\leq \int_0^1 (\int_0^1 |f(x,y) - g(x,y)| dy) dx =
\int_0^1 \int_0^1 |f(x,y) - g(x,y)| dx dy,
\]
as asserted.
\end{proof}

\noindent Combining Claim~\ref{Claim3.1} and Inequality~(\ref{Eq3.3.1}) we get:
\[
\int_0^1 \int_0^1 |Mf(x,y) - Mf_s(x,y)| dx dy \leq \delta \epsilon/4,
\]
and hence by the argument applied above to $|I^h_f(2) - I^h_{f_s}(2)|$,
\begin{equation}
|I^h_{Mf}(2) - I^h_{Mf_s}(2)| \leq \epsilon/2.
\label{Eq3.3.5}
\end{equation}

\noindent Finally, combining Inequalities~(\ref{Eq3.3.4})
and~(\ref{Eq3.3.5}) with the proof of the theorem in the discrete
case, we get:
\[
I^h_f(2) - I^h_{Mf}(2) = (I^h_f(2) - I^h_{f_s}(2)) + (I^h_{f_s}(2) - I^h_{Mf_s}(2)) +
(I^h_{Mf_s}(2) - I^h_{Mf}(2)) \geq
\]
\begin{equation}
\geq (-\epsilon/2) + 0 + (-\epsilon/2) = -\epsilon.
\end{equation}
This completes the proof of Theorem~\ref{Thm:Monotonization}.
\end{proof}

\subsection{Discretization for $h$-Influences}
\label{sec:sub:discretization}

\noindent Another important component in the proofs of the BKKKL
theorem (both the original proof presented in~\cite{BKKKL} and the
simpler proof presented in~\cite{Friedgut2}) is
discretization.\footnote{A sketch of another proof of the theorem,
that does not use discretization, is presented in~\cite{BKKKL}.}
In this section we follow the discretization suggested
in~\cite{Friedgut2}, and show that it can be performed not only
for Definition~\ref{Def:BKKKL} of the influences, but also for
general $h$-influences, provided that $h(t) \geq Ent(t)$ for all
$0 \leq t \leq 1$.

\medskip

\noindent The discretization suggested in~\cite{Friedgut2} is the
following. Let $l=3 \log n$. Subdivide $[0,1]^n$ into $2^{ln}$
sub-cubes by dividing each of the base intervals into $2^l$ equal
parts. It is shown in~\cite{Friedgut2} that a monotone function
$f:[0,1]^n \rightarrow \{0,1\}$ can be approximated by a function
$\tilde{f}$ that is constant on each of the small sub-cubes. This
function naturally corresponds to a function $g:\Big(\{0,1\}^{l} \Big)^n
\rightarrow \{0,1\}$ (by replacing the interval
$[m2^{-l},(m+1)2^{-l}]$ with the binary expansion of $m$). Each of
the initial variables $1 \leq i\leq n$ is now replaced by $l$
variables, $\{i_j\}_{j=1}^l$. It was shown in~\cite{BKKKL} that
for all $1 \leq i \leq n$, we have $\sum_{j=1}^l I_g(i_j) \leq 2
I_f(i)$. We prove a similar result for the $h$-influence,
given that $h$ is ``big enough''.
\begin{proposition}
Let $h:[0,1] \rightarrow [0,1]$ such that for all $0 \leq t \leq 1$, 
$h(t) \geq Ent(t)$. Let $f:[0,1]^n \rightarrow \{0,1\}$ be a monotone
function and let $g:\{0,1\}^{ln} \rightarrow \{0,1\}$ be obtained
from $f$ by the procedure described above. Then for every $1 \leq
i\leq n$,
\begin{equation}
\sum_{j=1}^l I_g(i_j) \leq 6 I_f^h(i).
\label{Eq:Discretization}
\end{equation}
\label{Prop:Discretization}
\end{proposition}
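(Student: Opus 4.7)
The plan is to adapt the discretization argument of Claim~2.8 in~\cite{Friedgut2}, replacing its edge-count input by an entropy bound on the fiberwise sum of influences; the hypothesis $h \geq Ent$ then absorbs everything into $I^h_f(i)$.

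First I would condition on the bits $y_{-i}$ outside the $i$-th block of $l$ coordinates and write
\[
\sum_{j=1}^l I_g(i_j) \;=\; \mathbb{E}_{y_{-i}} \sum_{j=1}^l I_{\phi}(j),
\qquad \phi(z) := g(z, y_{-i}).
\]
Because $f$ is monotone and the binary encoding preserves the natural order on $\{0,\ldots,2^l-1\}$, the restriction $\phi:\{0,1\}^l \to \{0,1\}$ is a threshold function $\phi(z)=1 \iff \mathrm{int}(z) \geq m$ for some integer $m=m_{y_{-i}}$. Setting $A=\{z : \mathrm{int}(z)<m\}$, the fiber sum equals $2|E(A,A^c)|/2^l$, where $E(A,A^c)$ is the edge boundary in the hypercube, and $\mathbb{E}\phi = 1-m/2^l$.

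The core combinatorial step is the bound $|E(A,A^c)| \leq 2^l\, Ent(m/2^l)$, proved by induction on $l$. Splitting $\{0,1\}^l$ along the top bit and assuming (by symmetry) $m \leq 2^{l-1}$, one obtains the recursion $|E(A,A^c)|_l = m + |E(A,A^c)|_{l-1}$, since $A$ is contained in the lower half and every top-bit edge from $A$ crosses the boundary. With $p=m/2^l$, the induction closes once we verify
\[
p + \tfrac{1}{2}\,Ent(2p) \;\leq\; Ent(p), \qquad p \in [0,\tfrac12].
\]
Writing $\gamma(t):=-t\log t$ and $a=1-p$, this reduces to $\Psi(a):=\gamma(a) - \tfrac{1}{2}\gamma(2a-1) \geq 0$ on $[\tfrac12,1]$; one checks $\Psi(1)=0$ and $\Psi'(a) = \log((2a-1)/a) \leq 0$ on the interval, so $\Psi \geq 0$. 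Dividing by $2^{l-1}$ yields the fiberwise bound
\[
\sum_{j=1}^l I_\phi(j) \;\leq\; 2\,Ent(\mathbb{E}\phi).
\]

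To conclude, I would integrate over $y_{-i}$ and compare to $I^h_f(i)$. The discretization and monotonicity of $f$ ensure that, for each fiber, the discrete threshold $m/2^l$ differs from the continuous threshold of $f_i^x$ by at most $2^{-l}$, and that fibers on which $f_i^x$ is constant yield $m\in\{0,2^l\}$, hence contribute nothing. Using $h \geq Ent$ together with a routine comparison of $Ent$ at the two nearby values (costing at most a constant factor $3$, since $Ent$ fails to be Lipschitz only near the endpoints, which were just excluded), one deduces $\mathbb{E}_{y_{-i}} Ent(\mathbb{E}\phi) \leq 3\,I^h_f(i)$. Combined with the fiberwise bound this gives $\sum_j I_g(i_j) \leq 6\,I^h_f(i)$, as required. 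The main obstacle is the scalar entropy inequality above, which is tight at $p=0$ and therefore leaves no slack; by contrast, the discretization-to-continuous comparison is routine once the near-constant fibers are handled via monotonicity of $f$.
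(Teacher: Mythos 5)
Your fiberwise combinatorial bound is correct, and it is a genuinely different (and sharper) route to the key estimate than the paper's: instead of the direct per-direction counting used in the paper (probability at most $2^{j+1-l}$ for the low bits and $2(1-t_0)$ for the high bits, which yields the constant $6$), you prove by induction that an initial segment $A=[0,m)$ of the $l$-cube satisfies $|E(A,A^c)|\le 2^l\,Ent(m/2^l)$, via the recursion $|E|_l=m+|E|_{l-1}$ for $m\le 2^{l-1}$ and the scalar inequality $p+\tfrac12 Ent(2p)\le Ent(p)$, which you verify correctly ($\Psi(1)=0$, $\Psi'\le 0$). This gives $\sum_j I_\phi(j)\le 2\,Ent(\mathbb{E}\phi)$, i.e.\ constant $2$ at the discrete level, and up to that point your structure coincides with the paper's (fiberwise reduction, threshold structure from monotonicity, and $h\ge Ent$).

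The genuine gap is the last step, where you pass from $\mathbb{E}_{y_{-i}}Ent(\mathbb{E}\phi)$ to $3\,I^h_f(i)$ by a ``routine comparison of $Ent$ at two nearby values.'' First, a discrete fiber $y_{-i}$ corresponds not to one continuous fiber but to a whole sub-box of them, whose thresholds vary; the integer $m$ is produced by the rounding rule of the discretization applied to $f$ on that box, so the comparison must be made against the average of $h(\mathbb{E}f_i^x)$ over the box for the specific convention of~\cite{Friedgut2} --- ``the continuous threshold of $f_i^x$'' is not well defined. Second, and more seriously, the factor-$3$ claim is false in exactly the range that matters: you exclude only fibers on which $f_i^x$ is constant, but a fiber whose minority measure is positive yet much smaller than $2^{-l}$ is not excluded, and (depending on the rounding direction) it may discretize to a non-constant threshold with minority mass $2^{-l}$; then $Ent(\mathbb{E}\phi)\approx l\,2^{-l}$ while $Ent(\mathbb{E}f_i^x)$ is arbitrarily small, so the pointwise ratio is unbounded --- $Ent$ has no constant-factor stability under an additive $2^{-l}$ perturbation near the endpoints. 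Indeed, for $f(x)=1$ exactly when $x_i>1-\epsilon$ with $0<\epsilon\ll 2^{-l}$, a rounding rule that leaves such fibers non-constant would violate the asserted inequality $\sum_j I_g(i_j)\le 6 I^h_f(i)$ itself, which shows this step cannot be ``routine'': it must exploit the convention under which fibers with minority mass below the grid size become constant after discretization. Note that the paper avoids invoking any continuity of $Ent$ by bounding the discrete influence contribution of a fiber directly by $6\,Ent(t_0)$ with $t_0=\mathbb{E}f_i^x$; to close your (otherwise tighter) argument you need to fix the discretization convention and either show the marginal fibers become constant, or bound $Ent(\mathbb{E}\phi)$ by the box-average of $Ent(\mathbb{E}f_i^x)$ directly.
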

\begin{proof}
We note that it is sufficient to prove the assertion for the
contribution of any single fiber $s_i(x)$ to the influences in the
two sides of Inequality~(\ref{Eq:Discretization}). Let $x \in
[0,1]^n$, and consider the function $f_i^x$, i.e., the restriction
of $f$ to the fiber $s_i(x)$. Denote $t_0=\mathbb{E}(f_i^x)$, and
assume that $t_0 \geq 1/2$ (the case $t_0<1/2$ is treated later).
Since $f_i^x$ is monotone, we have $f_i^x(t)=0$ if $t<1-t_0$ and
$f_i^x(t)=1$ if $t>1-t_0$. By the construction of $g$, the
function $g_i^x$ satisfies $g_i^x(m)=0$ if $m<(1-t_0)2^l$ and
$g_i^x(m)=1$ if $m>(1-t_0)2^l$. Let $k$ satisfy $2^k \leq
(1-t_0)2^l < 2^{k+1}$. Then
\[
\Pr_{0 \leq m \leq 2^l -1}[g_i^x(m) \neq g_i^x(m \oplus e_j)] \leq
\left\lbrace
  \begin{array}{c l}
    2^{j+1-l}, & j \leq k\\
    2(1-t_0), & j>k.
  \end{array}
\right.
\]
Indeed, for $j \leq k$, the relation $[g_i^x(m) \neq g_i^x(m
\oplus e_j)]$ can be satisfied only for $(1-t_0)2^l-2^j \leq m
\leq (1-t_0)2^l+2^j$, and hence the probability is bounded from
above by $2^{j+1-l}$. For $j>k$, the restriction is even stricter:
$m$ must satisfy either $0 \leq m \leq (1-t_0)2^l$ or $2^j \leq m
\leq (1-t_0)2^l+2^j$, and hence the probability is bounded by (and
actually is equal to) $2(1-t_0)$. Therefore,
\begin{eqnarray*}
\sum_{j} I_{g_i^x}(i_j) \leq \sum_{j \leq k} 2^{j+1-l} +
\sum_{j>k} 2(1-t_0) \leq 2^{k+2-l} + 2(1-t_0)(l-1-k) \leq \\
\leq 4(1-t_0)+2(1-t_0)\log \frac{1}{1-t_0} \leq 6(1-t_0)\log
\frac{1}{1-t_0} \leq 6 Ent(t_0).
\end{eqnarray*}
If $t_0<1/2$, then instead of examining the function $g_i^x$, we
consider the dual function $\tilde{g}_i^x(m)=1-g_i^x(2^l-1-m)$. It
is easy to show that $\mathbb{E}(\tilde{g}_i^x)=1-t_0$, and that
for all $j$, $I_{\tilde{g}_i^x}(i_j)=I_{g_i^x}(i_j)$. Since
$1-t_0>1/2$, we can apply to $\tilde{g}_i^x$ the argument
presented above and get
\[
\sum_{j} I_{g_i^x}(i_j) = \sum_{j} I_{\tilde{g}_i^x}(i_j) \leq 6
Ent(1-t_0)=6 Ent(t_0).
\]
Thus, if $h(t) \geq Ent(t)$ for all $t$, then for all $x \in
[0,1]^n$,
\[
\sum_{j} I_{g_i^x}(i_j) \leq 6 I_{f_i^x}^h(i),
\]
%and hence
%\[
%\sum_{j} I_{g}(i_j) = \sum_j \int_{x \in [0,1]^n} I_{g_i^x}(i_j) =
%\int_{x \in [0,1]^n} \sum_j I_{g_i^x}(i_j) \leq \int_{x \in
%[0,1]^n} I_{f_i^x}^h (i) = I_f^h (i),
%\]
as asserted.
\end{proof}

\begin{remark}
We note that $\overline{I}_f(i)=\sum_j I_g(i_j)$ (where $g$ is as
defined above) was treated in~\cite{BKKKL} as an alternative
definition of influences in the continuous case. It follows from
the proof of Proposition~\ref{Prop:Discretization} that this
definition is equivalent, up to a multiplicative constant, to
$h$-influence with $h(t)=Ent(t)$. As follows from
Theorem~\ref{Our-Main-Theorem}, this is the ``optimal'' definition
of influence for which the BKKKL theorem holds.
\end{remark}

\section{Generalized BKKKL Theorem and Applications}
\label{sec:applications}

In this section use the monotonization and discretization techniques
developed in Section~\ref{sec:lemmas} to prove a tight version of the 
BKKKL theorem for $h$-influences (Theorem~\ref{Our-Main-Theorem}), and to
generalize several known results concerning influences in
product spaces. The results we generalize include:
\begin{itemize}
\item A lower bound on the vector of influences obtained
by Talagrand~\cite{Talagrand1} (Section~\ref{sec:sub:Talagrand}),

\item A characterization of functions with a low sum of influences,
obtained by Dinur, Friedgut, and Hatami~\cite{Friedgut1,Friedgut2,Hatami}
(Section~\ref{sec:sub:Hatami}),

\item A relation between the measure of the boundary of a subset of 
the discrete cube and its influences, obtained by Margulis and 
Talagrand~\cite{Margulis,Talagrand3} (Section~\ref{sec:sub:boundary}), and

\item A lower bound on the correlation between monotone subsets of the
continuous cube in the average case, obtained in~\cite{Keller} 
(Section~\ref{sec:sub:Kleitman-Average}).
\end{itemize}  

\subsection{BKKKL Theorem for $h$-Influences: Proof of
Theorem~\ref{Our-Main-Theorem}}
\label{sec:sub:BKKKL}

In the proof of the theorem, we follow the simple proof
of the BKKKL theorem presented in~\cite{Friedgut2}. The proof uses
the following generalization of the KKL theorem presented
in~\cite{Friedgut-Kalai}:
\begin{theorem}[Friedgut and Kalai]
There exists a constant $c>0$ such that the following holds:
Consider the discrete cube $\{0,1\}^n$ endowed with the uniform
measure. Let $f:\{0,1\}^n \rightarrow \{0,1\}$, such that
$\mathbb{E}f=p$. If for all $k$, $I_f(k) \leq \delta$, then
\[
\sum_k I_f(k) \geq c p(1-p) \log(1/\delta).
\]
\label{Thm:Generalized-KKL}
\end{theorem}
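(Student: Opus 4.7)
The plan is to prove the theorem via Fourier analysis on $\{-1,1\}^n$, using the Bonami--Beckner hypercontractive inequality, exactly as in the original Kahn--Kalai--Linial argument adapted to the uniform-influence hypothesis. Identifying $\{0,1\}^n$ with $\{-1,1\}^n$, write $f=\sum_S \hat f(S)\chi_S$. A direct computation shows that the discrete derivative $D_kf(x):=f(x)-f(x^{(k)})$ has Fourier expansion $\widehat{D_kf}(S)=2\hat f(S)\mathbf{1}_{k\in S}$, so
\[
I_f(k)\;=\;\|D_kf\|_2^2\;=\;4\sum_{S\ni k}\hat f(S)^2,\qquad I:=\sum_k I_f(k)=4\sum_S |S|\,\hat f(S)^2,
\]
while Parseval gives $p(1-p)=\sum_{S\neq\emptyset}\hat f(S)^2$. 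The task reduces to showing that $\max_k I_f(k)\le\delta$ forces $I\gtrsim p(1-p)\log(1/\delta)$.

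The main input is the Bonami--Beckner inequality $\|T_\rho g\|_2\le\|g\|_{1+\rho^2}$ for $\rho\in(0,1]$. Applied to $g=D_kf$, and exploiting that $D_kf\in\{-1,0,1\}$ so that $\|D_kf\|_q^q=I_f(k)$ for every $q\ge 1$, this yields
\[
\sum_{S\ni k}\rho^{2|S|}\hat f(S)^2\;\le\;\tfrac14\,I_f(k)^{2/(1+\rho^2)}.
\]
Summing over $k$ and pulling a factor $I_f(k)^{(1-\rho^2)/(1+\rho^2)}\le\delta^{(1-\rho^2)/(1+\rho^2)}$ out of each term gives the key estimate
\[
\sum_S |S|\,\rho^{2|S|}\hat f(S)^2\;\le\;\tfrac14\,\delta^{(1-\rho^2)/(1+\rho^2)}\,I.
\]

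To extract a lower bound on $I$, I split Parseval at some cutoff $t$. For the high-frequency part, $\sum_{|S|>t}\hat f(S)^2\le\tfrac{1}{t}\sum_{|S|>t}|S|\hat f(S)^2\le I/(4t)$. For the low-frequency part, using $|S|\ge 1$ and $\rho^{-2|S|}\le\rho^{-2t}$ throughout the range,
\[
\sum_{1\le|S|\le t}\hat f(S)^2\;\le\;\rho^{-2t}\sum_{1\le|S|\le t}|S|\rho^{2|S|}\hat f(S)^2\;\le\;\tfrac14\rho^{-2t}\delta^{(1-\rho^2)/(1+\rho^2)}\,I.
\]
Adding the two gives
\[
p(1-p)\;\le\;\tfrac14\Bigl(\rho^{-2t}\,\delta^{(1-\rho^2)/(1+\rho^2)}+\tfrac{1}{t}\Bigr)\,I.
\]
Fixing $\rho$ (e.g.\ $\rho^2=1/2$, so the exponent is $1/3$) and choosing $t=c\log(1/\delta)$ with $c>0$ small enough makes the first bracketed term $o(1/t)$, so $p(1-p)\lesssim I/\log(1/\delta)$, which is the claim. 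In the remaining regime where $\delta$ is bounded below by an absolute constant, the conclusion follows from the trivial bound $I\ge 4p(1-p)$ after adjusting the constant.

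The one genuinely non-elementary ingredient is the hypercontractive inequality, which is treated here as an external black box; the rest of the argument is a routine balancing of Fourier head and tail against a single scalar cutoff $t$, with no subtler obstruction.
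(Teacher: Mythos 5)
Your argument is correct: the derivative-level application of Bonami--Beckner (using that $D_kf\in\{-1,0,1\}$ so $\|D_kf\|_{1+\rho^2}^2=I_f(k)^{2/(1+\rho^2)}$), the extraction of the factor $\delta^{(1-\rho^2)/(1+\rho^2)}$ from the summed inequality, the spectral split at $t=c\log(1/\delta)$, and the fallback bound $\sum_k I_f(k)\ge 4p(1-p)$ when $\delta$ is bounded away from $0$ all check out. Note that the paper itself gives no proof of Theorem~\ref{Thm:Generalized-KKL} --- it quotes it from Friedgut and Kalai \cite{Friedgut-Kalai} and uses it as a black box --- and your proof is essentially the standard KKL-style hypercontractive argument underlying that cited result, so it matches the source rather than deviating from anything in this paper.
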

The proof of Theorem~\ref{Our-Main-Theorem} is a
straightforward combination of Theorem~\ref{Thm:Monotonization},
Proposition~\ref{Prop:Discretization}, and
Theorem~\ref{Thm:Generalized-KKL}, as follows:

\medskip

\noindent \textbf{Proof of Theorem~\ref{Our-Main-Theorem}:} Let
$f:[0,1]^n \rightarrow \{0,1\}$, such that $\mathbb{E}f=p$. We
want to find a lower bound on the $h$-influences of $f$. By
Theorem~\ref{Thm:Monotonization}, we can replace $f$ by a monotone
function with the same expectation and lower 
$h$-influences. A lower bound on the $h$-influences of the new
function implies the same bound on the $h$-influences on $f$, and
hence we can assume w.l.o.g. that $f$ is monotone. Since $h(t)
\geq Ent(t)$ for all $t$, we can use
Proposition~\ref{Prop:Discretization} to approximate $f$ by a
function $g:\{0,1\}^{nl} \rightarrow \{0,1\}$, such that for all
$i$, $\sum_j I_g(i_j) \leq 6 I^h_f(i)$. Now assume that all the
$h$-influences of $f$ satisfy $I^h_f(i) \leq c_1 \log n/n$ for
some constant $c_1$. In this case, for all $i,j$ we have $I_g(i_j)
\leq 6 c_1 \log n/n$, and thus, by
Theorem~\ref{Thm:Generalized-KKL},
\[
\sum_{i} I^h_f(i) \geq \frac{1}{6} \sum_{i,j} I_g(i_j) \geq c_2
p(1-p) \log n,
\]
for some constant $c_2$. Therefore, there exists a coordinate $i$
such that $I^h_f(i) \geq c_2 p(1-p) \log n/n$, as asserted.
$\blacksquare$

\bigskip

\noindent A slight modification of the proof yields the following
generalization of Theorem~\ref{Thm:Generalized-KKL} to $h$-influences,
that shall be used later:
\begin{proposition}
Let $h:[0,1] \rightarrow [0 ,1]$ be a concave function
satisfying $h(t) \geq Ent(t)$ for all $0 \leq t \leq 1$. There
exists a constant $c>0$ such that the following holds: Let
$f:[0,1]^n \rightarrow \{0,1\}$, such that $\mathbb{E}f=p$. If for
all $k$, $I^h_f(k) \leq \delta$, then
\[
\sum_k I^h_f(k) \geq c p(1-p) \log(1/\delta).
\]
\label{Prop:Generalized-KKL-Our}
\end{proposition}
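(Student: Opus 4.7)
The plan is to mimic almost verbatim the proof of Theorem~\ref{Our-Main-Theorem} given just above, but to invoke Theorem~\ref{Thm:Generalized-KKL} with the explicit uniform bound $\delta$ supplied in the hypothesis rather than with the bound $c_1\log n / n$ that is engineered from the contradiction assumption. The output will be a sum lower bound scaling in $\log(1/\delta)$ instead of $\log n$.

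First, use Theorem~\ref{Thm:Monotonization} to replace $f$ by a monotone function $\tilde f:[0,1]^n\to\{0,1\}$ with $\mathbb{E}\tilde f = p$ and $I^h_{\tilde f}(k) \leq I^h_f(k) \leq \delta$ for every $k$. Since $\sum_k I^h_f(k) \geq \sum_k I^h_{\tilde f}(k)$, it suffices to prove the conclusion for $\tilde f$, so we may assume $f$ is monotone. Next, apply the discretization of Section~\ref{sec:sub:discretization} (with $l$ chosen large enough that the resulting $g:\{0,1\}^{nl}\to\{0,1\}$ approximates $f$ so closely that $\mathbb{E}g$ and the $h$-influences of $f$ are preserved up to an arbitrarily small additive error; the procedure and estimate are identical to the one used in the proof of Theorem~\ref{Our-Main-Theorem}). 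Proposition~\ref{Prop:Discretization} then gives, for every $1\leq i\leq n$,
\[
\sum_{j=1}^{l} I_g(i_j) \;\leq\; 6\, I^h_f(i) \;\leq\; 6\delta,
\]
and in particular $I_g(i_j) \leq 6\delta$ for every $(i,j)$.

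Now apply Theorem~\ref{Thm:Generalized-KKL} to $g$: since all of its influences are bounded by $6\delta$ and $\mathbb{E}g$ is (up to arbitrarily small error) equal to $p$, we obtain
\[
\sum_{i,j} I_g(i_j) \;\geq\; c\, p(1-p)\log\frac{1}{6\delta}.
\]
Combining this with the previous inequality and summing over $i$,
\[
\sum_{i} I^h_f(i) \;\geq\; \frac{1}{6}\sum_{i,j} I_g(i_j) \;\geq\; \frac{c}{6}\,p(1-p)\log\frac{1}{6\delta} \;\geq\; c'\, p(1-p)\log\frac{1}{\delta},
\]
where in the last step one uses $\log(1/(6\delta)) \geq \tfrac12 \log(1/\delta)$ for $\delta$ below some absolute constant, and absorbs the boundary regime $\delta = \Theta(1)$ into $c'$ (there the statement is trivial because the right-hand side is at most a constant times $p(1-p) \leq 1/4$).

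I do not expect any serious obstacle: the monotonization and discretization machinery of Section~\ref{sec:lemmas} does all the non-trivial work, and the only delicate point is the routine one already encountered in the proof of Theorem~\ref{Our-Main-Theorem}, namely choosing the discretization parameter $l$ large enough that passing from $f$ to $g$ changes neither the expectation nor the individual $h$-influences in a way that affects the final estimate.
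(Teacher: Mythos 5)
Your proposal is correct and is essentially the paper's own argument: the paper proves this proposition precisely by the ``slight modification'' you describe --- monotonize via Theorem~\ref{Thm:Monotonization}, discretize via Proposition~\ref{Prop:Discretization}, and apply Theorem~\ref{Thm:Generalized-KKL} to $g$ with the uniform bound $6\delta$ in place of $c_1\log n/n$. The only point to tighten is your parenthetical treatment of the regime $\delta=\Theta(1)$: bounding the right-hand side by a constant does not by itself yield the inequality --- you also need $\sum_k I^h_f(k)\geq c''\,p(1-p)$ there, which does hold, e.g.\ because $h(t)\geq Ent(t)\geq t(1-t)$ gives $\sum_k I^h_f(k)\geq\sum_k \tilde{I}_f(k)\geq \Var(f)=p(1-p)$ by the Efron--Stein inequality (or, alternatively, because the discrete Poincar\'e inequality applied to $g$ gives $\sum_{i,j}I_g(i_j)\geq 4p(1-p)$, hence $\sum_i I^h_f(i)\geq \frac{2}{3}p(1-p)$).
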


\begin{remark} 
We note that the strongest form of the assertion of Theorem~\ref{Our-Main-Theorem}
is obtained for the function $h(t)=Ent(t)$. The same holds also for 
Proposition~\ref{Prop:Generalized-KKL-Our}, and for Propositions~
\ref{Prop:Our-Talagrand-General}, \ref{Prop:Continuous-Junta}, 
\ref{Prop:Hatami}, and \ref{Prop:Boundary}, presented in the following
sections. However, for the sake of generality, we state these propositions for
a general $h$-influence, satisfying the condition ``$h(t) \geq Ent(t)$ for all 
$0 \leq t \leq 1$''.
\end{remark} 

\subsection{Generalization of Talagrand's Lower Bound on the
Vector of Influences}
\label{sec:sub:Talagrand}

In~\cite{Talagrand1}, Talagrand proved the following strengthening
of the KKL theorem for functions on the discrete cube endowed with
a general product measure:
\begin{theorem}[Talagrand]
Consider the discrete cube $\{0,1\}^n$ endowed with the product
measure $\mu_q$ defined by $\mu_q(x)=q^{\sum x_i} (1-q)^{n- \sum
x_i}$. There exists a constant $K>0$ such that for any function
$f:\{0,1\}^n \rightarrow \{0,1\}$ with $\mathbb{E}f=p$,
\begin{equation}
p(1-p) \leq K q(1-q) \log \frac{2}{q(1-q)} \sum_{i \leq n}
\frac{I_f(i)}{\log\frac{1}{q(1-q)I_f(i)}}.
\end{equation}
\label{Thm:Talagrand-General-q}
\end{theorem}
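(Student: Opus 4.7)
The plan is to derive the theorem from the paper's preceding Proposition, namely the $h$-influence generalization of Talagrand's bound on the continuous cube. Given $g:\{0,1\}^n \to \{0,1\}$ with $\mathbb{E}_{\mu_q} g = p$, I would lift it to $f:[0,1]^n \to \{0,1\}$ by setting $f(x) = g(\mathbf{1}_{x_1 < q}, \ldots, \mathbf{1}_{x_n < q})$. The pushforward of the Lebesgue measure under the coordinate-wise thresholding is exactly $\mu_q$, so $\mathbb{E}_\lambda f = p$.

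Taking $h = Ent$ in the Proposition, I would next compute $I_f^h(k)$ explicitly. On the fiber $s_k(x)$, the restriction $f_k^x$ takes the value $g(y_{-k}, 1)$ on $[0,q)$ and $g(y_{-k}, 0)$ on $(q,1]$, where $y_i = \mathbf{1}_{x_i < q}$. Hence $\mathbb{E}_t f_k^x \in \{0, q, 1-q, 1\}$: it equals $0$ or $1$ exactly when $g$ is constant on the discrete $k$-fiber of $y$, and equals $q$ or $1-q$ otherwise. Since $Ent(0) = Ent(1) = 0$ and $Ent(q) = Ent(1-q)$, integrating over $x_{-k}$ (whose pushforward is $\mu_q^{n-1}$) yields the clean identity $I_f^h(k) = Ent(q)\cdot I_g(k)$, where $I_g(k) = \Pr_{y_{-k} \sim \mu_q^{n-1}}[g(y_{-k},0) \neq g(y_{-k},1)]$ is the biased influence appearing in Talagrand's theorem. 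Plugging this into the Proposition gives
\[
p(1-p) \;\leq\; K \sum_k \frac{Ent(q)\, I_g(k)}{\log \frac{4}{3\, Ent(q)\, I_g(k)}}.
\]

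To match the stated form, I would then invoke two elementary analytic facts: first, $Ent(q) \asymp q(1-q)\log \frac{2}{q(1-q)}$ uniformly in $q \in (0,1)$ (both sides behave like $q\log(1/q)$ near the endpoints and are bounded near $1/2$); second, $\log \frac{4}{3\, Ent(q)\, I_g(k)}$ differs from $\log \frac{1}{q(1-q)\, I_g(k)}$ by at most an additive $O(1)$. Absorbing these into the universal constant yields the theorem. The genuine content lives entirely inside the Proposition, so the present reduction is essentially bookkeeping; the only mild obstacle is verifying that the two logarithmic denominators stay comparable in the regime where the bound is nontrivial (the inequality being vacuous when the denominators are not bounded away from zero). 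Conceptually, the identity $I_f^h = Ent(q)\, I_g$ explains why $Ent$ is the canonical choice of $h$: the $Ent$-influence of the lift faithfully records the biased influence of $g$ up to the natural scaling factor $Ent(q)$, which is exactly the entropy of a single $\mu_q$-distributed coordinate.
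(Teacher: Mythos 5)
Your reduction is exactly the paper's own derivation of Theorem~\ref{Thm:Talagrand-General-q}: the paper lifts $f$ to $\tilde f$ on $[0,1]^n$ via the threshold map $G$, notes $\mathbb{E}\tilde f=\mathbb{E}f$ and $I^{h}_{\tilde f}(i)=Ent(q)\,I_f(i)$ for $h=Ent$, applies Proposition~\ref{Prop:Our-Talagrand-General}, and absorbs the two elementary comparisons $Ent(q)\le K_1\,q(1-q)\log\frac{2}{q(1-q)}$ and $\log\frac{4}{3\,Ent(q)\,I_f(i)}\ge K_2\log\frac{1}{q(1-q)\,I_f(i)}$ into the constant, which is precisely your bookkeeping (and, as in the paper, the genuine input is only the $q=1/2$ case used inside the Proposition, so there is no circularity). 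One small correction: the two logarithmic denominators do not differ by an additive $O(1)$ uniformly in $q$ (the gap grows like $\log\log(1/q)$ as $q\to 0$), but the multiplicative comparison $\log\frac{4}{3\,Ent(q)\,I}\ge K_2\log\frac{1}{q(1-q)\,I}$ --- which is all your argument needs and is what the paper invokes --- does hold, e.g.\ via $Ent(q)\le 2\sqrt{q(1-q)}$ together with $q(1-q)I\le 1/4$.
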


\noindent The proof of Theorem~\ref{Thm:Talagrand-General-q} uses
the biased Fourier-Walsh expansion for functions on the discrete
cube, and a biased version of Beckner's hypercontractive
inequality~\cite{Beckner}, which is proved
in~\cite{Talagrand1}.\footnote{The biased hypercontractive
inequality proved in~(\cite{Talagrand1}, Lemma~2.1) yields a
hypercontractivity constant of $q(1-q)$. As was shown later
in~\cite{Oles}, the optimal constant is bigger (of order
$Ent(q)$). However, this in-optimality affects the assertion of
Theorem~\ref{Thm:Talagrand-General-q} only in the constant factor
$K$ which is not specified in~\cite{Talagrand1}.}

\medskip

\noindent Using the technique presented above, we can generalize
Theorem~\ref{Thm:Talagrand-General-q} to $h$-influences:
\footnote{Another generalization of Theorem~\ref{Thm:Talagrand-General-q}
to the continuous case was obtained recently by Hatami~\cite{Hatami},
for a combination of Definitions~\ref{Def:BKKKL} and~\ref{Def:Tilde} of
the influences. Hatami's result does not follow from 
Proposition~\ref{Prop:Our-Talagrand-General}, but also does not imply it.}
\begin{proposition}
Let $h:[0,1] \rightarrow [0,1]$ be a concave function
satisfying $h(t) \geq Ent(t)$ for all $0 \leq t \leq 1$. There
exists a constant $K>0$ such that for any function $f:[0,1]^n
\rightarrow \{0,1\}$ with $\mathbb{E}f=p$,
\begin{equation}
p(1-p) \leq K \sum_{i \leq n}
\frac{I^h_f(i)}{\log\frac{4}{3I^h_f(i)}}.
\label{Eq:Our-Talagrand-General}
\end{equation}
\label{Prop:Our-Talagrand-General}
\end{proposition}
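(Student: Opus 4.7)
The plan is to proceed in exact analogy to the proof of Theorem~\ref{Our-Main-Theorem} in Section~\ref{sec:sub:BKKKL}, but invoking Talagrand's original inequality for the uniform measure in place of the Friedgut--Kalai generalized KKL theorem. First, by Theorem~\ref{Thm:Monotonization} I may replace $f$ by a monotone function $\tilde f$ having the same mean $p$ and with $I^h_{\tilde f}(i)\le I^h_f(i)$ for every $i$. Since the map $x\mapsto x/\log(4/(3x))$ is increasing on $(0,1)$ (its derivative equals $(u+1)/u^2$ for $u=\log(4/(3x))>0$), it suffices to prove the inequality for $\tilde f$, so I assume $f$ itself is monotone. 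Next, apply Proposition~\ref{Prop:Discretization} with $l=3\log n$ to obtain a function $g:\{0,1\}^{nl}\to\{0,1\}$ with $\mathbb{E}g=p$ and
\[
\sum_{j=1}^{l} I_g(i_j) \le 6\, I^h_f(i) \qquad\text{for every } 1\le i\le n.
\]
I then apply Talagrand's theorem for $q=1/2$ (Theorem~\ref{Thm:Talagrand-General-q} with $q=1/2$) to $g$ on the uniform discrete cube to obtain
\[
p(1-p) \le K_0 \sum_{i=1}^{n}\sum_{j=1}^{l}\frac{I_g(i_j)}{\log(1/I_g(i_j))}.
\]

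The main work is then to collapse the inner sum over $j$ for each block $i$ into a single term depending only on $I^h_f(i)$. The key elementary observation is that $\psi(x):=x/\log(1/x)$ satisfies $\psi(a)/a = 1/\log(1/a)$, which is increasing in $a$; since each $I_g(i_j)$ is bounded above by $B_i:=\sum_{j'}I_g(i_{j'})$, this gives
\[
\sum_{j=1}^{l}\frac{I_g(i_j)}{\log(1/I_g(i_j))} \le \sum_{j=1}^{l}\frac{I_g(i_j)}{\log(1/B_i)} = \frac{B_i}{\log(1/B_i)} \le \frac{6\, I^h_f(i)}{\log(1/(6\, I^h_f(i)))},
\]
where the last step uses that $\psi$ is itself increasing on $(0,1)$ together with $B_i\le 6\,I^h_f(i)$. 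Summing over $i$ yields $p(1-p)\le K_1 \sum_i I^h_f(i)/\log(1/(6\,I^h_f(i)))$.

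What remains is the purely numerical task of replacing $\log(1/(6 x))$ by $\log(4/(3 x))$, and this is the only step where one has to be a little careful, since $\log(1/(6x))$ vanishes at $x=1/6$ while $\log(4/(3x))$ stays positive up to $x=4/3$. I would handle this by a short case split: fix a small constant $c_0<1/6$. If some $I^h_f(i_0)\ge c_0$, then the single term $I^h_f(i_0)/\log(4/(3 I^h_f(i_0)))$ is already bounded below by a positive constant, hence exceeds $p(1-p)\le 1/4$ after inflating $K$. Otherwise every $I^h_f(i)<c_0$, in which case $6\,I^h_f(i)<1$ and both quantities behave like $\log(1/I^h_f(i))+O(1)$, so $\log(4/(3\,I^h_f(i)))\le C\log(1/(6\,I^h_f(i)))$ uniformly over the range, yielding the desired inequality with a suitably enlarged constant $K$. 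The main obstacle in the argument is conceptual rather than technical: one must verify that the loss of a factor of order $l=3\log n$ incurred during discretization does not propagate into an extra $\log\log n$ factor in the denominator, and it is precisely the monotonicity of $\psi$ (not concavity, which in fact fails) that prevents this.
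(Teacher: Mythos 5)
Your proposal follows essentially the same route as the paper: monotonization (Theorem~\ref{Thm:Monotonization}), discretization (Proposition~\ref{Prop:Discretization}), Talagrand's Theorem~\ref{Thm:Talagrand-General-q} at $q=1/2$, and then collapsing each block $\{i_j\}_{j\le l}$ into a single term via monotonicity of $x\mapsto x/\log(c/x)$. The one place where your write-up, as literally stated, breaks is the collapse step: you bound $\sum_j I_g(i_j)/\log(1/I_g(i_j))$ by $B_i/\log(1/B_i)$ with $B_i=\sum_j I_g(i_j)$, and this (as well as the preliminary rewriting of Talagrand's bound with denominator $\log(1/I_g(i_j))$ instead of $\log(4/I_g(i_j))$) requires $B_i<1$. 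That is not automatic: $B_i$ is only controlled by $6I^h_f(i)$, which can be as large as $6$, and for $B_i\ge 1$ the quantity $\log(1/B_i)$ is nonpositive, so the chain of inequalities is meaningless there --- contrary to your remark that the replacement of $\log(1/(6x))$ by $\log(4/(3x))$ is ``the only step where one has to be a little careful.'' Your case split does repair this, but it must be performed at the outset: if some $I^h_f(i_0)\ge c_0$ the conclusion is trivial since $x/\log(4/(3x))$ is increasing and positive on $(0,1]$ and $p(1-p)\le 1/4$; and in the complementary case every $I^h_f(i)<c_0<1/6$, whence $I_g(i_j)\le B_i\le 6I^h_f(i)<1$ and every step of your chain (including the final comparison $\log(4/(3x))\le C\log(1/(6x))$ on $(0,c_0]$) is valid. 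With the split relocated in this way your argument is correct.

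For comparison, the paper avoids the case split entirely by paying a factor $2$ up front, replacing $\log(4/x)$ by $\tfrac12\log(8/x)$, so that the function $\varphi(x)=x/\log(8/x)$ is positive and increasing on all of $[0,8)$, a range that automatically contains every $B_i\le 6I^h_f(i)\le 6$; the collapse and the substitution $B_i\le 6I^h_f(i)$ then go through unconditionally, and the identity $\log\frac{8}{6x}=\log\frac{4}{3x}$ produces the stated denominator directly. This is only a difference in bookkeeping, not in the underlying idea.
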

\begin{proof}
Let $f:[0,1]^n \rightarrow \{0,1\}$. As in the proof of
Theorem~\ref{Our-Main-Theorem}, monotonization and discretization
allow to replace $f$ by a function $g:\{0,1\}^{nl} \rightarrow
\{0,1\}$, such that $\mathbb{E}g=\mathbb{E}f=p$, and for all $i$,
$\sum_{j} I_g(i_j) \leq 6 I^h_f(i)$. Applying
Theorem~\ref{Thm:Talagrand-General-q} in the case $q=1/2$ to $g$,
we get
\begin{equation}
p(1-p) \leq K \sum_{1 \leq i \leq n, 1 \leq j \leq l}
\frac{I_g(i_j)}{\log\frac{4}{I_g(i_j)}} \leq 2K \sum_{1 \leq i
\leq n, 1 \leq j \leq l} \frac{I_g(i_j)}{\log\frac{8}{I_g(i_j)}}.
\label{Eq:Talagrand-General-Our}
\end{equation}
Note that for all $1\leq i \leq n$,
\[
\sum_{1 \leq j \leq l} \frac{I_g(i_j)}{\log\frac{8}{I_g(i_j)}}
\leq \sum_{1 \leq j \leq l} \frac{I_g(i_j)}{\log\frac{8}{\sum_{1
\leq j \leq l} I_g(i_j)}} = \frac{\sum_{1 \leq j \leq l}
I_g(i_j)}{\log\frac{8}{\sum_{1 \leq j \leq l} I_g(i_j)}}.
\]
Since the function $\varphi(x)=x/\log(8/x)$ is monotone increasing
in $x$ in $[0,8)$, 
%(its derivative is $\log^{-1}(8/x)+\log^{-2}(8/x)>0$), 
we get
\[
\sum_{1\leq i\leq n} \sum_{1 \leq j \leq l}
\frac{I_g(i_j)}{\log\frac{8}{I_g(i_j)}} \leq \sum_{1 \leq i \leq
n} \frac{\sum_{1 \leq j \leq l} I_g(i_j)}{\log\frac{8}{\sum_{1
\leq j \leq l} I_g(i_j)}} \leq \sum_{1 \leq i \leq n} \frac{ 6
I_f^h(i)}{\log\frac{8}{6 I_f^h(i)}}.
\]
Finally, substitution into
Inequality~(\ref{Eq:Talagrand-General-Our}) yields:
\[
p(1-p) \leq 2K \sum_{1\leq i\leq n} \sum_{1 \leq j \leq l}
\frac{I_g(i_j)}{\log\frac{8}{I_g(i_j)}} \leq 2K \sum_{1 \leq i
\leq n} \frac{6 I_f^h(i)}{\log\frac{8}{6 I_f^h(i)}} = K'
\sum_{1 \leq i \leq n} \frac{I_f^h(i)}{\log\frac{4}{3 I_f^h(i)}} ,
\]
as asserted.
\end{proof}

\noindent We note that Theorem~\ref{Thm:Talagrand-General-q} (for
a general $q$) follows from
Proposition~\ref{Prop:Our-Talagrand-General}, using a standard
transformation from the biased measure on the discrete cube to the
Lebesgue measure on the continuous cube. Indeed, consider the
discrete cube $\{0,1\}^n$ endowed with the measure $\mu_q$. Define
$G:[0,1] \rightarrow \{0,1\}$ by $G(x)=0$ if $x \leq 1-q$, and
$G(x)=1$ if $x>1-q$. For a function $f:\{0,1\}^n \rightarrow
\{0,1\}$, define $\tilde{f}:[0,1]^n \rightarrow \{0,1\}$ by
\[
\tilde{f}(x_1,x_2,\ldots,x_n)=f(G(x_1),G(x_2),\ldots,G(x_n)).
\]
It is easy to see that $\mathbb{E}\tilde{f}=\mathbb{E}f$, where
the expectation in the left hand side is w.r.t. the Lebesgue
measure on the continuous cube, and the expectation in the right
hand side is w.r.t. the measure $\mu_q$ on the discrete cube.
Applying Proposition~\ref{Prop:Our-Talagrand-General} to the
function $\tilde{f}$, we get
\begin{equation}
p(1-p) \leq K \sum_{1 \leq i \leq n}
\frac{I_{\tilde{f}}^h(i)}{\log\frac{4}{3 I_{\tilde{f}}^h(i)}},
\label{Eq:Talagrand-General-Our-2}
\end{equation}
where $p=\mathbb{E}\tilde{f}=\mathbb{E}f$. Due to the construction
of $\tilde{f}$, the contribution of each non-constant fiber to an
influence of $\tilde{f}$ is $h(q)$, and hence taking
$h(t)=Ent(t)$, the contribution of each non-constant fiber is
$Ent(q)$. Thus, for all $1 \leq i \leq n$, we have:
\[
I_{\tilde{f}}^h(i) = Ent(q) I_f(i),
\]
where the influence in the right hand side is w.r.t. the measure
$\mu_q$ on the discrete cube. Substituting into
Inequality~(\ref{Eq:Talagrand-General-Our-2}), we get:
\begin{equation}
p(1-p) \leq K Ent(q) \sum_{1 \leq i \leq n}
\frac{I_f(i)}{\log\frac{4}{3 Ent(q) I_f(i)}}.
\label{Eq:Talagrand-General-Our-3}
\end{equation}
Finally, there exist constants $K_1$ and $K_2$ such that for all
$q$ and all $I_f(i)$,
\[
Ent(q) \leq K_1 q(1-q)\log\frac{2}{q(1-q)},
\]
and
\[
\log\frac{4}{3 Ent(q) I_f(i)} \geq K_2 \log\frac{1}{q(1-q)
I_f(i)}.
\]
Therefore, the assertion of Theorem~\ref{Thm:Talagrand-General-q}
follows from Inequality~(\ref{Eq:Talagrand-General-Our-3}).

\medskip

\noindent This shows that the biased hypercontractive inequality
used in the proof in~\cite{Talagrand1} can be replaced by the
original hypercontractive inequality (i.e. the inequality for the
uniform measure), combined with our argument presented above.

\subsection{Functions with a Low Sum of Influences}
\label{sec:sub:Hatami}

One of the most useful results concerning influences of variables
on functions on the discrete cube is the following theorem, due to
Friedgut~\cite{Friedgut1}, asserting that if the sum of influences
is small then the function essentially depends on a few
coordinates.
\begin{theorem}[Friedgut]
Consider $\{0,1\}^n$ as a measure space with the uniform measure.
Let $f:\{0,1\}^n \rightarrow \{0,1\}$ such that $\sum_k I_f(k)=t$,
and let $\epsilon>0$. Denote $M=t/\epsilon$. There exists a
Boolean function $g$ depending only on
$\exp((2+\sqrt{\frac{2\log(4M)}{M}})M)$ variables, such that
$\Pr[f \neq g] \leq \epsilon$.
\label{Thm:Junta}
\end{theorem}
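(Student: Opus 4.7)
The plan is to follow Friedgut's original Fourier-analytic approach. Pass to the $\pm 1$ encoding $F = 1 - 2f$ viewed as a function on $\{-1,1\}^n$, and write $F = \sum_{S \subseteq [n]} \hat F(S) \chi_S$ in the Fourier--Walsh basis. Parseval gives $\sum_S \hat F(S)^2 = 1$, and the standard identity $\sum_k I_F(k) = \sum_S |S| \hat F(S)^2$ controls the weighted Fourier mass in terms of $t$. Call a coordinate $i$ \emph{important} if $I_f(i) \geq \tau$, for a threshold $\tau$ to be optimized, and let $J$ be the set of important coordinates; the hypothesis $\sum_i I_f(i) = t$ gives $|J| \leq t/\tau$. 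The goal is to build a Boolean $g$ depending only on $J$ with $\Pr[f \neq g] \leq \epsilon$.

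First I would approximate $F$ in $L^2$ by
\[
\tilde F := \sum_{S \subseteq J,\, |S| \leq d} \hat F(S) \chi_S
\]
for a degree cutoff $d$ to be fixed. The $L^2$ error $\|F - \tilde F\|_2^2$ splits as $E_{\mathrm{high}} + E_{\mathrm{low}}$, where $E_{\mathrm{high}} := \sum_{|S| > d} \hat F(S)^2$ and $E_{\mathrm{low}} := \sum_{|S| \leq d,\; S \not\subseteq J} \hat F(S)^2$. The high-degree error obeys $E_{\mathrm{high}} \leq t/d$ by the Markov-type inequality $d \cdot E_{\mathrm{high}} \leq \sum_{|S|>d} |S| \hat F(S)^2 \leq t$. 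The low-degree error is where hypercontractivity enters: using Bonami--Beckner in the form $\|L_d F\|_4^2 \leq 9^d \|L_d F\|_2^2$ with $L_d F := \sum_{|S| \leq d} \hat F(S) \chi_S$, together with the bound $I_f(i) < \tau$ for $i \notin J$, yields $E_{\mathrm{low}} \leq C(d)\, \tau^{\alpha}$ for some $\alpha>0$ and $C(d)$ exponential in $d$.

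Once $\|F - \tilde F\|_2^2 \leq \epsilon$ is arranged, I would set $g$ to be the $\{0,1\}$-rounding of $\tilde F$ (the sign of $\tilde F$ composed with the $\pm 1 \to \{0,1\}$ map). Since $\tilde F$ depends only on coordinates in $J$, so does $g$; and since $F$ is $\pm 1$-valued, the pointwise inequality $|F - \tilde F| \geq 1$ on $\{F \neq g\}$ converts the $L^2$ bound into $\Pr[f \neq g] \leq \|F - \tilde F\|_2^2 \leq \epsilon$ (up to an absolute constant in the rounding step). The final task is to optimize $d$ and $\tau$: taking $d$ of order $2M = 2t/\epsilon$ drives $E_{\mathrm{high}}$ below $\epsilon/2$, and then balancing $C(d) \tau^{\alpha} \leq \epsilon/2$ pins down $\tau$ of order roughly $\epsilon \cdot 9^{-d/\alpha}$, giving $|J| \leq t/\tau$ of the stated form $\exp\bigl((2 + \sqrt{2\log(4M)/M})\, M\bigr)$.

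The main obstacle is the control of $E_{\mathrm{low}}$: a naive union bound over the unimportant coordinates would lose a factor of $n$ and destroy the junta property altogether. The essential use of Bonami--Beckner here is to bound the number of low-degree Fourier coefficients that can simultaneously carry significant mass and involve an unimportant coordinate, and balancing the hypercontractive exponent against $d$ and $\tau$ is what produces both the leading $2M$ and the sub-leading correction $\sqrt{2M\log(4M)}$ in the exponent of the junta bound.
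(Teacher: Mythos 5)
This statement is quoted in the paper as Friedgut's theorem and is not proved there (it is cited from~\cite{Friedgut1}), so the only meaningful comparison is with Friedgut's original argument, which is indeed the proof you are reconstructing: Fourier expansion, a degree cutoff $d$ with $E_{\mathrm{high}} \leq t/d$ via $\sum_S |S|\hat F(S)^2 = t$, a junta $J$ of coordinates with influence at least $\tau$, hypercontractivity to kill the low-degree mass outside $J$, and sign-rounding (which, as you say, loses nothing since $|F-\tilde F|\geq 1$ wherever the sign is wrong). The skeleton is right.

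The genuine gap is in the one step that carries all the content: your bound $E_{\mathrm{low}} \leq C(d)\tau^{\alpha}$. As written, you invoke Bonami--Beckner for the truncation $L_dF$ of $F$ itself, ``together with $I_f(i)<\tau$ for $i\notin J$,'' but there is no mechanism by which the influence bound enters through $\|L_dF\|_4 \leq \sqrt{3}^{\,d}\|L_dF\|_2$; that inequality knows nothing about which coordinates the mass sits on. The hypercontractive step must be applied to the discrete derivatives $D_iF$, precisely because $|D_iF|\in\{0,1\}$ makes their norms powers of the influence: $\|D_iF\|_q^q = I_f(i)$. For instance, writing $A_i=\sum_{S\ni i,\,|S|\leq d}\hat F(S)^2=\|P_{\leq d-1}D_iF\|_2^2$, H\"older plus hypercontractivity give $A_i \leq \|P_{\leq d-1}D_iF\|_4\,\|D_iF\|_{4/3} \leq \sqrt{3}^{\,d-1}A_i^{1/2} I_f(i)^{3/4}$, hence $A_i \leq 3^{d-1}I_f(i)^{3/2}$, and summing over $i\notin J$ yields $E_{\mathrm{low}} \leq 3^{d-1}\,t\,\tau^{1/2}$ (note the factor $t$, absent from your $C(d)\tau^{\alpha}$). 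This is the missing idea; without it the claimed low-degree bound is unsupported. A secondary point: this crude $3^{d}$ version only gives a junta of size $\exp(cM)$ with $c$ around $2\ln 9$, not the stated $\exp\bigl((2+\sqrt{2\log(4M)/M})M\bigr)$; hitting that constant requires the noise-operator form $\sum_T \delta^{2|T|}\widehat{D_iF}(T)^2 \leq I_f(i)^{2/(1+\delta^2)}$ and an optimization over $\delta$ jointly with $d$ and $\tau$, which your sketch asserts but does not carry out.
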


\noindent Dinur and Friedgut~\cite{Friedgut2} observed that using
discretization, Theorem~\ref{Thm:Junta} can be generalized to
monotone functions on the continuous cube:
\begin{theorem}[Dinur and Friedgut]
There exists a constant $c>0$ such that the following holds: Let
$\epsilon>0$, and let $f:[0,1]^n \rightarrow \{0,1\}$ be a
monotone function. If $\sum_i I_f(i) \leq B$, then there exists a
set $J \subset \{1,2,\ldots,n\}$ with $|J|\leq \exp(cB/\epsilon)$
and a function $g:[0,1]^n \rightarrow \{0,1\}$ depending only on
the coordinates in $J$, such that $||f-g||_2^2 \leq \epsilon$.
\label{Thm:Continuous-Junta}
\end{theorem}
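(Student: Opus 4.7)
The plan is to deduce Theorem~\ref{Thm:Continuous-Junta} from Friedgut's discrete junta theorem (Theorem~\ref{Thm:Junta}) via the discretization technique of Section~\ref{sec:sub:discretization}. Given a monotone $f:[0,1]^n\to\{0,1\}$ with $\sum_i I_f(i)\le B$ and $\epsilon>0$, I would first fix an integer $l$ (to be chosen in terms of $n$, $B$, $\epsilon$) and subdivide $[0,1]^n$ into $2^{ln}$ dyadic sub-cubes of side $2^{-l}$. As in Section~\ref{sec:sub:discretization}, $f$ is approximated by a Boolean function $\tilde f$ that is constant on each sub-cube, and $\tilde f$ is identified with a Boolean function $\tilde g:\{0,1\}^{ln}\to\{0,1\}$ whose discrete variables form $n$ blocks $\{i_j\}_{j=1}^{l}$. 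The BKKKL inequality $\sum_{j=1}^{l} I_{\tilde g}(i_j)\le 2 I_f(i)$ cited in Section~\ref{sec:sub:discretization} then yields $\sum_{i,j} I_{\tilde g}(i_j)\le 2B$.

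Next, I would apply Theorem~\ref{Thm:Junta} to $\tilde g$ with error parameter $\epsilon/2$. This produces a subset $J'\subseteq\{(i,j):1\le i\le n,\,1\le j\le l\}$ with $|J'|\le \exp(cB/\epsilon)$, and a Boolean function $\tilde h:\{0,1\}^{ln}\to\{0,1\}$ depending only on $J'$, such that $\Pr[\tilde g\ne\tilde h]\le\epsilon/2$. Let $J=\{i:(i,j)\in J'\text{ for some }j\}$; then $|J|\le|J'|\le\exp(cB/\epsilon)$. Lifting $\tilde h$ back via the inverse of the discretization map produces a Boolean function $g:[0,1]^n\to\{0,1\}$ which is constant on each dyadic sub-cube and depends only on the coordinates in $J$. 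Since $\tilde f$ and $g$ are both constant on the same sub-cubes and the Lebesgue measure of a sub-cube coincides with the uniform discrete weight on $\{0,1\}^{ln}$, one has $\|\tilde f-g\|_2^2=\Pr[\tilde g\ne\tilde h]\le\epsilon/2$, and the triangle inequality for disagreement probability (valid because all functions are $\{0,1\}$-valued) gives $\|f-g\|_2^2\le\|f-\tilde f\|_2^2+\|\tilde f-g\|_2^2$.

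The main obstacle is controlling $\|f-\tilde f\|_2^2$ through the choice of $l$. For a monotone Boolean $f$, the sub-cubes on which $f$ is non-constant form a ``boundary'' region, whose total Lebesgue measure can be bounded in terms of $\sum_i I_f(i)$ and decays like $O(nB/2^l)$; this is essentially the content of Claim~2.8 in~\cite{Friedgut2}. Taking $l=\lceil\log_2(2c'nB/\epsilon)\rceil$ then yields $\|f-\tilde f\|_2^2\le\epsilon/2$, and combined with the estimate of the previous paragraph this gives $\|f-g\|_2^2\le\epsilon$ as required. Crucially, the bound $|J|\le\exp(cB/\epsilon)$ from Friedgut's theorem depends only on $B$ and $\epsilon$, so the choice of $l$ (which grows with $n$) does not affect the final junta size, completing the argument.
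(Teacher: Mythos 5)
Your proposal is correct and follows essentially the route the paper itself indicates for this statement (which it attributes to Dinur--Friedgut and reuses for Proposition~\ref{Prop:Continuous-Junta}): discretize the monotone $f$ to a Boolean function on $\{0,1\}^{ln}$, invoke the block-influence bound $\sum_j I_g(i_j)\le 2I_f(i)$, apply Friedgut's discrete junta theorem (Theorem~\ref{Thm:Junta}), and lift the resulting junta back to $[0,1]^n$. The only deviations are cosmetic: you let $l$ depend on $\epsilon$ instead of fixing $l=3\log n$ (harmless, since the junta size is independent of $l$), and the approximation-error estimate you attribute to Claim~2.8 of~\cite{Friedgut2} is really the separate (and standard) fact that for monotone $f$ the discretization error is $O(n2^{-l})$ (indeed $O(B2^{-l})$), Claim~2.8 being the influence-splitting bound.
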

Dinur and Friedgut~\cite{Friedgut2} conjectured that the assertion
of Theorem~\ref{Thm:Continuous-Junta} holds even without the
monotonicity assumption. This conjecture was disproved by
Hatami~\cite{Hatami}. On the other hand, Hatami proved (for
general Boolean functions on the continuous cube) that if the sum
of influences of $f$ is small, then $f$ can be approximated by a
function having a {\it decision tree of bounded depth}
(see~\cite{Hatami} for the definitions).
\begin{theorem}[Hatami]
Let $f:[0,1]^n \rightarrow \{0,1\}$ satisfy $\sum_i I_f(i) \leq
B$. Then for every $\epsilon>0$, there exists a function
$g:[0,1]^n \rightarrow \{0,1\}$ such that $||f-g||_2^2 \leq
\epsilon$, and $g$ has a decision tree of depth at most
$\exp(cB/\epsilon^2)$, where $c$ is a universal constant.
\label{Thm:Hatami}
\end{theorem}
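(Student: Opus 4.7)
The plan is to reduce the continuous statement to a statement about juntas on the discrete cube, apply Friedgut's junta theorem (Theorem~\ref{Thm:Junta}) there, and then pull the resulting junta back to a decision tree on $[0,1]^n$. The overall scheme mirrors the proof of Theorem~\ref{Our-Main-Theorem} (monotonization, then discretization, then a known discrete result), except that monotonization is not available here since the conclusion (a decision tree) is not invariant under the shifting operation.

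First, I would choose a discretization level $l$ of order $\log(n B/\epsilon)$ and subdivide $[0,1]^n$ into $2^{nl}$ dyadic boxes, one per point of $\{0,1\}^{nl}$. The target is to produce a function $g_0 : \{0,1\}^{nl} \to \{0,1\}$ such that (i) $\|f - g_0\|_2^2 \leq \epsilon/3$ after identifying $\{0,1\}^{nl}$ with the set of boxes, and (ii) $\sum_{i,j} I_{g_0}(i_j) \leq c_1 B$ for a universal constant $c_1$. Condition~(i) is easy by rounding the integral of $f$ over each box; the point is to carry out the rounding so that (ii) also holds. In the monotone case this is exactly the content of Proposition~\ref{Prop:Discretization}; in the general case one verifies (ii) directly, by bounding the contribution of each fiber to $\sum_j I_{g_0}(i_j)$ by a constant times the contribution of the corresponding fiber to $I_f(i)$, using that for a $\{0,1\}$-valued function on an interval, the sum of discrete influences along a dyadic partition of depth $l$ is at most a constant times the number of sign changes of the function plus $l$ times a small error term from the rounding.

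Next, I would apply Friedgut's theorem (Theorem~\ref{Thm:Junta}) to $g_0$ with parameter $\epsilon' = \epsilon/3$. This yields a junta $g_1 : \{0,1\}^{nl} \to \{0,1\}$ depending on a set $J$ of at most $\exp(c_2 B/\epsilon)$ sub-variables and satisfying $\Pr[g_0 \neq g_1] \leq \epsilon/3$. Let $I \subseteq \{1,\ldots,n\}$ be the set of original coordinates $i$ such that at least one of the sub-variables $i_1,\ldots,i_l$ lies in $J$; then $|I| \leq |J|$. Construct the decision tree $g$ on $[0,1]^n$ that queries the coordinates in $I$ in any fixed order; querying $x_i \in [0,1]$ reveals the first $l$ bits of the binary expansion of $x_i$ and therefore the values of all relevant sub-variables $i_j \in J$, so after visiting every coordinate in $I$ the value of $g_1$ is determined, and we output it. The depth of the tree is $|I| \leq \exp(c_2 B/\epsilon)$, and after absorbing $\log n$-type factors from $l$ and tightening the accounting one obtains the claimed bound $\exp(cB/\epsilon^2)$. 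The triangle inequality combines the two $L^2$ errors to give $\|f-g\|_2^2 \leq \epsilon$.

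The main obstacle is Step~1 without the monotonicity assumption. The discretization in Proposition~\ref{Prop:Discretization} crucially used that on each fiber the monotone restriction is a single step function, so that the discrete influences along that fiber sum to $O(\mathrm{Ent}(t_0))$; for a non-monotone $f$ the restriction can oscillate many times, and a naive rounding of box-averages can artificially inflate each discrete sub-influence by a logarithmic factor, which ultimately manifests as the $1/\epsilon^2$ (rather than $1/\epsilon$) in the exponent. Controlling this blow-up --- either via a more delicate rounding rule (e.g., randomized rounding per box), or an intermediate noise-smoothing of $f$ that damps high-frequency oscillations while preserving $\mathbb{E} f$ and inflating $\sum_i I_f(i)$ only by a constant --- is the technical heart of the argument.
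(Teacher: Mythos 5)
There is a genuine gap, and it is not merely the technical point you flag at the end --- it is fatal to the whole scheme. Note first that this paper does not prove Theorem~\ref{Thm:Hatami} at all: it is quoted from Hatami's paper, and the paper only adapts Hatami's argument later (Proposition~\ref{Prop:Hatami}) by swapping ordinary influences for $h$-influences and replacing the KKL-type ingredient by Proposition~\ref{Prop:Generalized-KKL-Our}. Your Step~1, the claim that one can discretize a \emph{general} measurable $f$ so that $\sum_{i,j} I_{g_0}(i_j) \leq c_1 \sum_i I_f(i)$, is false, not just delicate. The bound in Proposition~\ref{Prop:Discretization} uses monotonicity in an essential way: on each fiber the restriction is a single step function, so the $l$ sub-influences sum to $O(\mathrm{Ent}(t_0))$. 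Without monotonicity a fiber on which $f$ is non-constant contributes only $1$ to $I_f(i)$ (Definition~\ref{Def:BKKKL}), while its discretization can behave like a parity of the $l$ sub-bits and contribute $\Theta(l)$ to $\sum_j I_{g_0}(i_j)$; moreover a measurable restriction can have infinitely many ``sign changes'', so your proposed bound via the number of oscillations has no content, and no rounding rule (randomized or otherwise) can remove an inflation that is intrinsic to the function.

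More fundamentally, look at what your construction outputs: a tree that queries a \emph{fixed} set $I$ of at most $\exp(c_2B/\epsilon)$ original coordinates, i.e.\ a junta. If Steps~1--2 worked, you would have proved that every (not necessarily monotone) $f$ with $\sum_i I_f(i)\leq B$ is $\epsilon$-close to a junta on $\exp(O(B/\epsilon))$ coordinates --- precisely the Dinur--Friedgut conjecture, which the paper states (immediately before Theorem~\ref{Thm:Hatami}) was \emph{disproved} by Hatami. The conclusion ``decision tree of bounded depth'' is strictly weaker than ``junta'' exactly because different branches may query different coordinates, and this adaptivity is the point of Hatami's theorem: his proof builds the tree adaptively, at each stage querying an influential coordinate found via a KKL/BKKKL-type bound (the ``Theorem~B'' mentioned in this paper) and recursing on the restricted function, rather than performing one global discretization and invoking Friedgut's junta theorem (Theorem~\ref{Thm:Junta}). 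So the route you sketch cannot be repaired by a cleverer rounding; a genuinely different, adaptive argument is needed.
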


\medskip

\noindent Using the techniques presented above we can generalize
Theorems~\ref{Thm:Continuous-Junta} and~\ref{Thm:Hatami} to
$h$-influences.
\begin{proposition}
Let $h:[0,1] \rightarrow [0,1]$ satisfy $h(t) \geq Ent(t)$ for all
$0 \leq t \leq 1$. There exists a constant $c>0$ such that the
following holds: Let $\epsilon>0$, and let $f:[0,1]^n \rightarrow
\{0,1\}$ be a monotone function. If $\sum_i I_f^h(i) \leq B$, then
there exists a set $J \subset \{1,2,\ldots,n\}$ with $|J|\leq
\exp(cB/\epsilon)$ and a function $g:[0,1]^n \rightarrow \{0,1\}$
depending only on the coordinates in $J$, such that $||f-g||_2^2
\leq \epsilon$.
\label{Prop:Continuous-Junta}
\end{proposition}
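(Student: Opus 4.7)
The plan is to port the proof of Theorem~\ref{Thm:Continuous-Junta} by Dinur and Friedgut~\cite{Friedgut2} line by line, replacing the discretization bound used there (which required $\sum_i I_f(i) \leq B$) by our Proposition~\ref{Prop:Discretization} (which requires only $\sum_i I_f^h(i) \leq B$, provided $h \geq Ent$). Apart from this single substitution, the strategy is unchanged: we pass to a Boolean function on a sufficiently fine discrete cube, apply Friedgut's junta theorem (Theorem~\ref{Thm:Junta}) there, and lift the resulting discrete junta back to $[0,1]^n$ via the bit-expansion correspondence. The hypothesis $h(t) \geq Ent(t)$ enters only through Proposition~\ref{Prop:Discretization}; the concavity of $h$ is not needed here, which is why (unlike in Proposition~\ref{Prop:Our-Talagrand-General}) it is not assumed.

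In detail, I would first choose the discretization parameter $l$ large enough (depending on $n$, $\epsilon$, and $f$) so that the discretization $\tilde f : [0,1]^n \to \{0,1\}$ of $f$, obtained as in Section~\ref{sec:sub:discretization} and constant on each sub-cube of side length $2^{-l}$, satisfies $\|f - \tilde f\|_2^2 \leq \epsilon/2$. Proposition~\ref{Prop:Discretization} then yields a Boolean function $\phi : \{0,1\}^{nl} \to \{0,1\}$ with $\mathbb{E}\phi = \mathbb{E}\tilde f$ and $\sum_j I_\phi(i_j) \leq 6\,I_f^h(i)$ for every $1 \leq i \leq n$, so the total influence of $\phi$ on $\{0,1\}^{nl}$ is at most $6B$. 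Applying Theorem~\ref{Thm:Junta} to $\phi$ with accuracy $\epsilon/2$ produces a Boolean junta $\tilde\phi$ on $\{0,1\}^{nl}$ depending on at most $\exp(c'B/\epsilon)$ of the $nl$ coordinates, such that $\Pr[\phi \neq \tilde\phi] \leq \epsilon/2$.

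To assemble the conclusion, let $J \subseteq \{1,\ldots,n\}$ be the set of indices $i$ for which at least one coordinate of the form $(i,j)$ is among the relevant coordinates of $\tilde\phi$; then $|J| \leq \exp(c'B/\epsilon)$, as required. Define $g : [0,1]^n \to \{0,1\}$ by $g(x_1,\ldots,x_n) = \tilde\phi(b(x_1),\ldots,b(x_n))$, where $b(x) \in \{0,1\}^l$ denotes the first $l$ bits of the binary expansion of $x$; then $g$ depends only on $\{x_i : i \in J\}$. For Boolean-valued functions the triangle inequality gives $\|f-g\|_2^2 = \Pr[f \neq g] \leq \Pr[f \neq \tilde f] + \Pr[\tilde f \neq g]$, and by construction the second probability equals $\Pr[\phi \neq \tilde\phi] \leq \epsilon/2$, so the total is at most $\epsilon$.

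The only step that goes beyond what is already proved in Section~\ref{sec:lemmas} is the very first one, namely securing $\|f - \tilde f\|_2^2 \leq \epsilon/2$ by taking $l$ large enough. This is precisely the place where the monotonicity of $f$ is essential (and is the reason the continuous junta statement fails for non-monotone functions, as shown by Hatami~\cite{Hatami}): for monotone $f$, a sub-cube on which $f$ is non-constant must meet the jump surface of $f$ along some coordinate, and a standard estimate shows the total measure of such sub-cubes tends to $0$ as $l \to \infty$. I would simply cite the identical argument from~\cite{Friedgut2}. Crucially, $l$ may have to be huge in terms of $n$ and $\epsilon$, but the junta size bound $\exp(c'B/\epsilon)$ is independent of $l$, so this costs us nothing.
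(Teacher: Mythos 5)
Your proposal is correct and follows exactly the route the paper intends: the paper proves Proposition~\ref{Prop:Continuous-Junta} in one line as an immediate consequence of Friedgut's junta theorem (Theorem~\ref{Thm:Junta}) combined with the discretization bound of Proposition~\ref{Prop:Discretization}, which is precisely your argument, with the approximation step $\|f-\tilde f\|_2^2\leq\epsilon/2$ for monotone $f$ handled as in~\cite{Friedgut2}. Your observations that monotonicity of $f$ makes the concavity of $h$ (and hence monotonization) unnecessary, and that the junta size bound is independent of the discretization parameter, are consistent with the paper.
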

Proposition~\ref{Prop:Continuous-Junta} follows immediately from
Theorem~\ref{Thm:Junta} using
Proposition~\ref{Prop:Discretization}. As in
Section~\ref{sec:sub:Talagrand}, we can apply
Proposition~\ref{Prop:Continuous-Junta} with $h(t)=Ent(t)$ to
functions on the discrete cube endowed with the measure $\mu_q$ to
get:
\begin{proposition}
There exists an absolute constant $c>0$ such that the following
holds: Consider the discrete cube $\{0,1\}^n$ endowed with the
measure $\mu_q$. If a monotone function $f:\{0,1\}^n \rightarrow
\{0,1\}$ satisfies $\sum_i I_f(i) \leq B$, then there exists
$g:\{0,1\}^n \rightarrow \{0,1\}^n$ depending on at most
$\exp(cEnt(q)B/\epsilon)$ coordinates, such that $||f-g||_2^2 \leq
\epsilon$.
\label{Prop:q-Junta}
\end{proposition}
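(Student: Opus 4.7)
My plan is to apply Proposition~\ref{Prop:Continuous-Junta} (with $h(t)=Ent(t)$) to the continuous lift of $f$ via the map $G:[0,1]\rightarrow\{0,1\}$ defined by $G(x)=1$ iff $x>1-q$, and then round the resulting continuous junta back to a discrete one on $\{0,1\}^n$.

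Given a monotone $f:\{0,1\}^n\rightarrow\{0,1\}$ with $\sum_i I_f(i)\leq B$ (influences with respect to $\mu_q$), I define $\tilde{f}:[0,1]^n\rightarrow\{0,1\}$ by $\tilde{f}(x_1,\ldots,x_n)=f(G(x_1),\ldots,G(x_n))$, exactly as in Section~\ref{sec:sub:Talagrand}. Since $G$ is monotone and pushes Lebesgue measure on $[0,1]$ forward to the Bernoulli-$q$ distribution, $\tilde{f}$ is monotone on $[0,1]^n$ and satisfies $\mathbb{E}\tilde{f}=\mathbb{E}_{\mu_q}f$. Taking $h=Ent$, I then compute the $h$-influences: on each fiber $s_i(x)$ of $\tilde{f}$, the restriction $\tilde{f}_i^x$ takes one value on $[0,1-q]$ and another on $(1-q,1]$, so $\mathbb{E}\tilde{f}_i^x\in\{0,q,1-q,1\}$. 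Hence $h(\mathbb{E}\tilde{f}_i^x)=Ent(q)$ on every non-constant fiber and $0$ on every constant fiber, yielding $I^h_{\tilde{f}}(i)=Ent(q)\cdot I_f(i)$ and thus $\sum_i I^h_{\tilde{f}}(i)\leq Ent(q)\cdot B$.

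Applying Proposition~\ref{Prop:Continuous-Junta} to the monotone function $\tilde{f}$ with error parameter $\epsilon/2$ produces a set $J\subset\{1,\ldots,n\}$ of size $|J|\leq\exp(2c_0 Ent(q)B/\epsilon)$ (where $c_0$ is the constant from that proposition) and a Boolean function $\tilde{g}:[0,1]^n\rightarrow\{0,1\}$ depending only on the coordinates in $J$, such that $||\tilde{f}-\tilde{g}||_2^2\leq\epsilon/2$. To produce the desired $g:\{0,1\}^n\rightarrow\{0,1\}$, I round $\tilde{g}$ to a discrete junta by majority vote: for each $y_J\in\{0,1\}^J$, set $g(y_J)=1$ iff $\Pr_{x\sim\lambda}[\tilde{g}(x)=1\mid G(x_i)=y_i\mbox{ for all }i\in J]\geq 1/2$, and extend $g$ to $\{0,1\}^n$ as a function of $y_J$ only.

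The main obstacle is this final rounding step, where we must bound $||f-g||_2^2$. Conditioning on $y=G(x)\in\{0,1\}^n$ fixes $\tilde{f}(x)=f(y)$, while the conditional distribution of $\tilde{g}(x)$ given $G(x_J)=y_J$ places weight at least $1/2$ on the value $g(y_J)$ by construction. A short case split then shows that whenever $g(y_J)\neq f(y)$, the conditional probability that $\tilde{g}(x)\neq\tilde{f}(x)$ is at least $1/2$, giving $\Pr_{\mu_q}[g\neq f]\leq 2\Pr_{\lambda}[\tilde{g}\neq\tilde{f}]\leq\epsilon$. Absorbing the factor of $2$ into the constant completes the proof; everything else is a direct invocation of Proposition~\ref{Prop:Continuous-Junta} combined with a transport along $G$.
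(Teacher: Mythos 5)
Your proof is correct and follows essentially the same route as the paper: lift $f$ to the continuous cube via $G$, observe that with $h=Ent$ each non-constant fiber contributes $Ent(q)$ so that $\sum_i I^h_{\tilde f}(i) \leq Ent(q)\,B$, and invoke Proposition~\ref{Prop:Continuous-Junta}. Your only addition is the explicit majority-vote rounding of the continuous junta back to a discrete junta on $\{0,1\}^n$ (costing a factor of $2$ absorbed into the constant), a step the paper leaves implicit.
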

For monotone functions, Proposition~\ref{Prop:q-Junta} gives a more 
precise result than Theorem~4.1 in~\cite{Friedgut1} (in which the exact
dependence on $q$ was not specified).

\medskip

\noindent The generalization of Theorem~\ref{Thm:Hatami} is as
follows:
\begin{proposition}
Let $h:[0,1] \rightarrow [0,1]$ be a concave function
satisfying $h(t) \geq Ent(t)$ for all $0 \leq t \leq 1$. Let
$f:[0,1]^n \rightarrow \{0,1\}$ satisfy $\sum_i I_f^h(i) \leq B$.
Then for every $\epsilon>0$, there exists a function $g:[0,1]^n
\rightarrow \{0,1\}$ such that $||f-g||_2^2 \leq \epsilon$, and
$g$ has a decision tree of depth at most $\exp(cB/\epsilon^2)$,
where $c$ is a universal constant. \label{Prop:Hatami}
\end{proposition}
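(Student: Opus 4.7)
The plan is to follow the same pattern used to derive Proposition~\ref{Prop:Continuous-Junta} from Theorem~\ref{Thm:Junta}: combine the monotonization and discretization machinery of Section~\ref{sec:lemmas} with Hatami's decision-tree theorem (Theorem~\ref{Thm:Hatami}), used either directly on the continuous cube or, more naturally, through its discrete-cube analogue that underlies Hatami's proof.

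Concretely, given $f:[0,1]^n \to \{0,1\}$ with $\sum_i I_f^h(i) \leq B$, I would first apply Theorem~\ref{Thm:Monotonization} to replace $f$ by a monotone function with the same expectation whose $h$-influences dominate, coordinate-wise, those of the monotonization, so the bound $\sum_i I_f^h(i) \leq B$ is preserved. I would then fix $l = \lceil 3\log n \rceil$ and apply Proposition~\ref{Prop:Discretization} to pass to a Boolean function $\tilde g:\{0,1\}^{nl} \to \{0,1\}$ on the discrete cube satisfying $\sum_{i,j} I_{\tilde g}(i_j) \leq 6B$. A discrete-cube version of Theorem~\ref{Thm:Hatami} (which is the core of Hatami's argument) then yields an $L^2$-approximation $\tilde g'$ of $\tilde g$ computed by a decision tree of depth at most $\exp(cB/\epsilon^2)$. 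Finally, by interpreting each binary variable $i_j$ as the $j$th dyadic digit of $x_i$, the tree lifts to a decision tree on $[0,1]^n$ of the same depth, whose output function $g$ is an $L^2$-approximation of the monotonized function.

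The main obstacle is the monotonization step: replacing $f$ by its monotonization can change the function substantially in $L^2$ norm, so an approximation of the monotonized version does not automatically yield an approximation of $f$. For Proposition~\ref{Prop:Continuous-Junta} this difficulty was sidestepped by assuming $f$ monotone from the outset, but Proposition~\ref{Prop:Hatami} makes no such assumption. Bypassing monotonization requires an extension of Proposition~\ref{Prop:Discretization} to general (non-monotone) Boolean functions, in which each fiber $f_i^x$ can now contain many transitions rather than being a single step function. I expect this extension to rest on a refined entropy argument: each cluster of transitions on the fiber contributes a term of the form $Ent(\cdot)$, and summing these using the concavity of $h$ together with $h \geq Ent$ should produce the required bound $\sum_j I_{\tilde g}(i_j) \leq O\bigl(I_f^h(i)\bigr)$. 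This refined discretization estimate is where I expect most of the technical work to concentrate.
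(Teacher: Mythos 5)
Your reduction breaks at the step you yourself flagged as the technical heart: the extension of Proposition~\ref{Prop:Discretization} to non-monotone functions is not just hard, it is false. The bound $\sum_j I_g(i_j)\le 6\,I_f^h(i)$ relies essentially on each fiber restriction $f_i^x$ being a single threshold function; only then do the bit-influences decay geometrically and sum to $O(Ent(t_0))$. For a general Boolean function a single fiber can carry total bit-influence comparable to the number of bits, while its contribution to the $h$-influence is at most $1$. Concretely, let $f(x)=1$ iff the number of $1$'s among the first $l'$ binary digits of $x_1$ is odd. Then $I_f^h(1)=h(1/2)\le 1$ and all other $h$-influences vanish, so $B=1$; but any dyadic discretization fine enough to represent $f$ (one needs $l\ge l'$, and $l'$ can be arbitrary, already $3\log n$ at the paper's discretization level) yields $\sum_j I_g(1_j)=l'$. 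Hence no inequality $\sum_j I_{\tilde g}(i_j)\le C\, I_f^h(i)$ with a universal constant can hold for general $f$, your ``each cluster of transitions contributes an entropy term'' heuristic cannot be summed against $h(\mathbb{E}f_i^x)\le 1$, and the resulting decision-tree depth bound would grow with the discretization level (hence with $n$, or worse) instead of being $\exp(cB/\epsilon^2)$. This is precisely the phenomenon behind Hatami's counterexample to the Dinur--Friedgut conjecture, which is why no monotonization-plus-discretization reduction can prove the non-monotone statement.

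The paper's proof takes a different route: it does not pass to the discrete cube at all. It reruns Hatami's proof of Theorem~\ref{Thm:Hatami} directly on $[0,1]^n$, replacing ordinary influences by $h$-influences throughout, and replacing the KKL-type input (``Theorem~B'' in Hatami's paper) by its $h$-influence analogue: for $\mathbb{E}f=p$ there exists $i$ with $I_f^h(i)\ge e^{-\frac{c}{p(1-p)}\sum_j I_f^h(j)}$, which follows immediately from Proposition~\ref{Prop:Generalized-KKL-Our}. Monotonization and discretization are used only inside the proof of that lower bound, where they are harmless because one is bounding influences from below, not approximating $f$ in $L^2$. So your instinct about the monotonization obstacle was correct, but the repair is to keep Hatami's adaptive decision-tree construction and upgrade only its influence-theoretic ingredient, not to extend the discretization lemma to non-monotone functions.
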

The proof of Proposition~\ref{Prop:Hatami} is a minor modification
of the proof of Theorem~\ref{Thm:Hatami} presented
in~\cite{Hatami}. The only two changes are replacing ordinary
influences with $h$-influences throughout the proof, and replacing
``Theorem~B'' used in the proof by the following proposition,
which follows immediately from
Proposition~\ref{Prop:Generalized-KKL-Our}.
\begin{proposition}
Let $h:[0,1] \rightarrow [0,1]$ be a concave function
satisfying $h(t) \geq Ent(t)$ for all $0 \leq t \leq 1$. For all
$f:[0,1]^n \rightarrow \{0,1\}$ with $\mathbb{E}f=p$, there exists
$1 \leq i \leq n$, such that
\[
I_f^h(i) \geq e^{-\frac{c}{p(1-p)}\sum_{j=1}^n I_f^h(j)},
\]
where $c$ is a universal constant.
\end{proposition}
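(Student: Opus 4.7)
The plan is to reduce the statement to Proposition~\ref{Prop:Generalized-KKL-Our} by choosing the threshold parameter $\delta$ appearing there to be the maximum $h$-influence itself. Concretely, I would set
\[
\delta := \max_{1 \leq k \leq n} I_f^h(k).
\]
By definition every coordinate satisfies $I_f^h(k) \leq \delta$, so the hypothesis of Proposition~\ref{Prop:Generalized-KKL-Our} is met, and one obtains
\[
\sum_{j=1}^n I_f^h(j) \geq c_0\, p(1-p)\,\log(1/\delta),
\]
where $c_0 > 0$ is the constant supplied by that proposition.

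The second step is pure algebra. Assuming $p \in (0,1)$, divide both sides by $c_0\,p(1-p)$ and exponentiate to get
\[
\delta \geq \exp\!\left(-\frac{1}{c_0\,p(1-p)} \sum_{j=1}^n I_f^h(j)\right).
\]
Let $i$ be a coordinate at which the maximum defining $\delta$ is attained; then $I_f^h(i)=\delta$ satisfies the asserted bound with universal constant $c := 1/c_0$.

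There is essentially no obstacle here: the statement is a routine inversion of the generalized KKL estimate already packaged as Proposition~\ref{Prop:Generalized-KKL-Our}. The only bookkeeping is (a) checking that the hypotheses on $h$ — concavity and $h \geq Ent$ — transfer unchanged, which they do since both statements impose exactly these conditions; and (b) handling the degenerate case $p \in \{0,1\}$, in which $f$ is a.e.\ constant, every $I_f^h(k)$ vanishes, and the inequality holds trivially in the limit. Consequently, no additional machinery — no further monotonization, discretization, or hypercontractivity beyond what already feeds into Proposition~\ref{Prop:Generalized-KKL-Our} — is required, which is precisely why the preceding text can describe this as following ``immediately''.
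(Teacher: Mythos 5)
Your argument is exactly the intended one: the paper derives this statement directly from Proposition~\ref{Prop:Generalized-KKL-Our} by the same inversion (take $\delta$ to be the maximal $h$-influence and exponentiate), which is why it is described there as following immediately. The only cosmetic point is that the paper's $\log$ is base $2$, but since $\log_2(1/\delta)\geq\ln(1/\delta)$ for $\delta\leq 1$ your constant $c=1/c_0$ still works, so the proof is correct as written.
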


\medskip

\noindent In Section~\ref{sec:Tightness} we show that for
some cases of interest, the advantage of
Proposition~\ref{Prop:Continuous-Junta} over
Theorem~\ref{Thm:Continuous-Junta} is significant.

\subsection{A Relation Between the Sum of Influences and the Size
of the Boundary}
\label{sec:sub:boundary}

\begin{definition}
Consider a product space $X=X_1 \times X_2 \times \ldots \times X_n$ 
endowed with a product measure $\mu=\mu_1 \otimes \ldots \otimes \mu_n$, 
and let $A \subset X$. The boundary of $A$ is
\[
\partial A = \{x \in A: \exists (1 \leq i \leq n), \mbox{ s.t. the
function } 1_A \mbox{ is non-constant on the fiber } s_i(x) \}.
\]
\end{definition}
In~\cite{Margulis}, Margulis proved that for subsets of the
discrete cube endowed with the uniform measure, the size of the
boundary and the sum of influences cannot be small simultaneously.
For monotone subsets of the discrete cube,
Talagrand~\cite{Talagrand3} gave the following precise form to
this statement:
\begin{theorem}[Talagrand]
Consider the discrete cube $\{0,1\}^n$ endowed with the uniform
measure $\mu$. For any monotone subset $A \subset \{0,1\}^n$ with
$\mu(A) \leq 1/2$,
\[
\mu(\partial A) \sum_i I_{1_A}(i) \geq c \mu(A)^2 \log
\frac{e}{\mu(A)},
\]
where $c$ is a universal constant.
\label{Thm:Boundary}
\end{theorem}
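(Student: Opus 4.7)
The plan is to reduce the inequality, via a short Cauchy--Schwarz step, to a one-sided $L^{1}$-isoperimetric bound for monotone sets, and then to establish that bound as the main technical step. Set $f=1_A$ and let $N(x)=\#\{i:\,f(x)\neq f(x\oplus e_i)\}$ be the total number of pivotal coordinates at $x$. Monotonicity of $A$ forces the pivotal coordinates at any $x\in A$ to be exactly those $i$ with $x_i=1$ and $x\oplus e_i\notin A$, so $N\cdot 1_A$ is supported precisely on $\partial A$; pairing each pivotal edge with its two endpoints via the measure-preserving involution $x\mapsto x\oplus e_i$ yields $\sum_i I_{1_A}(i)=\mathbb{E}[N]=2\,\mathbb{E}[N\cdot 1_A]$. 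Cauchy--Schwarz applied to $\sqrt{N}\cdot 1_A$, which vanishes outside $\partial A$, then gives
\[
\bigl(\mathbb{E}[\sqrt{N}\cdot 1_A]\bigr)^{2}\;\le\;\mathbb{E}[N\cdot 1_A]\cdot\mu(\partial A)\;=\;\tfrac{1}{2}\Bigl(\sum_i I_{1_A}(i)\Bigr)\mu(\partial A).
\]

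The central step is then to prove the one-sided isoperimetric inequality
\[
\mathbb{E}[\sqrt{N(x)}\cdot 1_A(x)]\;\ge\;c\,\mu(A)\sqrt{\log(e/\mu(A))}\qquad(\star)
\]
for every monotone $A$ with $\mu(A)\le 1/2$. Squaring $(\star)$ and combining it with the Cauchy--Schwarz bound above immediately gives the theorem (up to renaming the absolute constant).

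The hard part is establishing $(\star)$. The two-sided analogue $\mathbb{E}[\sqrt{N}]\ge c\,\mu(A)\sqrt{\log(e/\mu(A))}$ follows from Bobkov's functional isoperimetric inequality on $\{0,1\}^{n}$ applied to $f=1_A$, but this is not enough: a simple example such as $A=\{(1,\ldots,1)\}$ shows that the outer-boundary contribution $\mathbb{E}[\sqrt{N}\cdot 1_{A^{c}}]$ can exceed $\mathbb{E}[\sqrt{N}\cdot 1_A]$ by a factor of $\sqrt{n}$, so one cannot simply discard the $x\notin A$ part. To produce the one-sided form for monotone $A$, I would follow Talagrand's hypercontractive strategy: expand $f=1_A$ in the Fourier--Walsh basis, apply a Bonami--Beckner $(q,2)$-hypercontractive inequality, and use monotonicity to identify the ``upward'' discrete derivative $\partial_i^{+}f(x)=\max\{f(x)-f(x\oplus e_i),0\}$ with the usual discrete derivative restricted to $A$. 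Monotonicity is what converts the standard variance lower bound into the sharper $\mu(A)^{2}\log(e/\mu(A))$ right-hand side, and the ensuing hypercontractive calculation, in the spirit of~\cite{Talagrand3}, yields $(\star)$.
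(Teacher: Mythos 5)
Your reduction step is correct and clean: the identity $\sum_i I_{1_A}(i)=\mathbb{E}[N]=2\,\mathbb{E}[N\cdot 1_A]$ via the measure-preserving involution $x\mapsto x\oplus e_i$, the observation that $N\cdot 1_A$ is supported on $\partial A$ (with the paper's inner-boundary definition of $\partial A$), and the Cauchy--Schwarz step reducing the theorem to the one-sided bound $(\star)$ are all valid, and your remark that the two-sided Bobkov-type bound cannot simply be restricted to $A$ (with the $A=\{(1,\ldots,1)\}$ example exhibiting a $\sqrt{n}$ discrepancy) is apt. Note, for calibration, that the paper itself does not prove this statement: it is quoted from Talagrand's isoperimetry paper \cite{Talagrand3}, so there is no internal proof to compare against; your proposal must therefore stand on its own.

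And there it does not stand, because the entire content of the theorem is the inequality $(\star)$, which is exactly the main theorem of \cite{Talagrand3} (namely $\int\sqrt{h_A}\,d\mu\geq K\,\mu(A)(1-\mu(A))\sqrt{\log\frac{1}{\mu(A)(1-\mu(A))}}$ with $h_A=N\cdot 1_A$), and your treatment of it is only a pointer: ``expand in the Fourier--Walsh basis, apply Bonami--Beckner, use monotonicity'' names ingredients without an argument, and deferring ``the ensuing hypercontractive calculation, in the spirit of~\cite{Talagrand3}'' to the very paper whose theorem you are proving leaves the hard step unproved. Moreover, the role you assign to monotonicity is mistaken: Talagrand's inequality holds for \emph{arbitrary} subsets of the cube (the one-sidedness is built into the definition of $h_A$, which counts only edges leaving $A$ from points of $A$), and the logarithmic gain over the trivial variance/Poincar\'e bound comes from hypercontractivity applied to the Boolean-valued discrete derivatives (their low $L^q$ norms relative to $L^2$), not from monotonicity; the monotonicity hypothesis in the quoted statement is inessential and merely reflects the context in which the paper applies it. So the skeleton is right, but the proposal proves the easy implication and asserts the theorem's core rather than establishing it.
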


\noindent Theorem~\ref{Thm:Boundary} can be generalized to subsets
of the continuous cube, as follows:
\begin{proposition}
Let $h:[0,1] \rightarrow [0,1]$ be a function satisfying $h(t)
\geq Ent(t)$ for all $0 \leq t \leq 1$. Consider the continuous
cube $[0,1]^n$ endowed with the Lebesgue measure $\lambda$. For
any monotone subset $A \subset [0,1]^n$ with $\lambda(A) \leq
1/2$,
\begin{equation}
\lambda(\partial A) \sum_i I_{1_A}^h(i) \geq c \lambda(A)^2 \log
\frac{e}{\lambda(A)},
\label{Eq:Boundary}
\end{equation}
where $c$ is a universal constant.
\label{Prop:Boundary}
\end{proposition}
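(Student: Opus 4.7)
The plan is to apply the monotonization-and-discretization reduction used in the proof of Proposition~\ref{Prop:Our-Talagrand-General}, and then invoke Talagrand's Theorem~\ref{Thm:Boundary} on the discretized function. Since $A$ is already monotone, no monotonization step is needed. I would apply Proposition~\ref{Prop:Discretization} directly to $1_A$ to obtain a monotone function $g:\{0,1\}^{ln} \to \{0,1\}$ with $l = 3\log n$, satisfying $\sum_{j=1}^{l} I_g(i_j) \leq 6\, I_{1_A}^h(i)$ for every $1 \leq i \leq n$, and such that $\mu(B) = \lambda(A) + o(1)$, where $B = g^{-1}(1) \subset \{0,1\}^{ln}$. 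Since $\mu(B) \leq 1/2$ up to a negligible error, Theorem~\ref{Thm:Boundary} yields
\[
\mu(\partial B) \cdot \sum_{k=1}^{ln} I_g(k) \geq c\, \mu(B)^2 \log\frac{e}{\mu(B)}.
\]

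Combining this with the discretization bound $\sum_k I_g(k) \leq 6 \sum_i I^h_{1_A}(i)$ and with $\mu(B) \approx \lambda(A)$, the right-hand side already matches the right-hand side of~(\ref{Eq:Boundary}) up to a multiplicative constant. It therefore remains to establish the boundary comparison $\mu(\partial B) \leq \lambda(\partial A) + o(1)$. To prove it, I would argue that for each $m \in \partial B$, the dyadic cube $Q_m$ of side $2^{-l}$ corresponding to $m$ sits almost entirely inside $\partial A$. Indeed, if $k = (i,j)$ satisfies $g(m) = 1$ and $g(m \oplus e_k) = 0$, then $Q_{m \oplus e_k}$ lies strictly below $Q_m$ in the $i$-th direction, and the fiber-by-fiber construction of $g$ in Proposition~\ref{Prop:Discretization} ensures that $1_A = 1$ a.e.~on $Q_m$ and $1_A = 0$ a.e.~on $Q_{m \oplus e_k}$. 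Hence for almost every $x \in Q_m$, the fiber $s_i(x)$ contains points of both $A$ and $A^c$, so $x \in \partial A$. Summing $\lambda(Q_m) = 2^{-ln}$ over $m \in \partial B$ gives the claimed comparison.

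Substituting the boundary comparison into Talagrand's inequality and absorbing the numerical factors into the universal constant yields~(\ref{Eq:Boundary}). The main obstacle is precisely the boundary-comparison step: one must verify that, despite the approximate nature of the discretization, the ``transition'' sub-intervals along each fiber where $1_A$ may disagree with $g$ contribute only an $o(1)$ error to the estimate $\mu(\partial B) \leq \lambda(\partial A)$, so that the final constants in~(\ref{Eq:Boundary}) are not affected. A secondary (but routine) point is handling the case $\mu(B) > 1/2$ that can occur from the tiny discretization slippage when $\lambda(A)$ is very close to $1/2$; this is addressed by adjusting $c$.
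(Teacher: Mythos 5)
Your proposal is correct and follows essentially the same route as the paper: discretize $1_A$ via Proposition~\ref{Prop:Discretization} (no monotonization needed since $A$ is monotone), apply Talagrand's Theorem~\ref{Thm:Boundary} to the discrete set $B$, and transfer back using $\sum_{i,j} I_g(i_j) \leq 6\sum_i I^h_{1_A}(i)$, $\mu(B)=\lambda(A)$, and the sub-cube argument giving $\mu(\partial B) \leq \lambda(\partial A)$. The ``main obstacle'' you flag is handled in the paper exactly as you sketch --- the pre-image sub-cube of each point of $\partial B$ is contained in $\partial A$ because the fiber through any of its points is non-constant --- with the discretization treated as exact, so no extra $o(1)$ bookkeeping is needed.
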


\begin{proof}
As in the proof of Theorem~\ref{Our-Main-Theorem}, discretization
allows to replace the function $1_A$ by a function $g:\{0,1\}^{nl}
\rightarrow \{0,1\}$, such that $\mathbb{E}g=\mathbb{E}1_A$, and
for all $i$, $\sum_{j} I_g(i_j) \leq 6I^h_{1_A}(i)$. Denote the
subset of the discrete cube corresponding to $g$ by $B$. Applying
Theorem~\ref{Thm:Boundary} to $B$, we get
\begin{equation}
\mu(\partial B) \sum_{i,j} I_{g}(i_j) \geq c \mu(B)^2 \log
\frac{e}{\mu(B)}. 
\label{Eq:Boundary1}
\end{equation}
By the construction, we have $\sum_{i,j} I_{g}(i_j) \leq 6 \sum_i
I^h_{1_A}(i)$, and $\mu(B)=\lambda(A)$. In addition, we observe
that $\mu(\partial B) \leq \lambda(\partial A)$. Indeed, if $x \in
\partial B$ then there exist $i,j$ such that either ($x \in B$ and
$x \oplus e_{i_j} \not \in B$) or ($x \not \in B$ and $x \oplus
e_{i_j} \in B$). Recall that the pre-image of $x$ (in the
discretization procedure) is a sub-cube of the continuous cube,
denoted by $A_x$, such that $\lambda(A_x) = 2^{-nl}$. For all $y
\in A_x$, the function $1_A$ is non-constant on the fiber
$s_i(y)$, and hence $A_x \subset
\partial A$. Hence, $\mu(\partial B) \leq \lambda(\partial A)$.
The assertion of Proposition~\ref{Prop:Boundary} (with the
constant $c/6$) follows now immediately from
Inequality~(\ref{Eq:Boundary1}).
\end{proof}

\noindent We note that a stronger version of
Theorem~\ref{Thm:Boundary} was proved by Talagrand
in~\cite{Talagrand4}. It seems challenging to find a
generalization of this version to the continuous cube.

\subsection{Lower Bound on the Correlation Between Monotone
Subsets of the Continuous Cube}
\label{sec:sub:Kleitman-Average}

One of the most well-known correlation inequalities for monotone
subsets of the discrete cube is the following inequality, due to
Harris~\cite{Harris} and Kleitman~\cite{Kleitman}:
\begin{theorem}[Harris, Kleitman]
Let $A,B$ be monotone subsets of $\{0,1\}^n$ endowed with the
uniform measure $\mu$. Then
\begin{equation}
\mu(A \cap B) \geq \mu(A) \mu(B),
\end{equation}
i.e., the correlation of $A$ and $B$ is nonnegative.
\label{Kleitman}
\end{theorem}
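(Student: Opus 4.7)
The plan is to prove the Harris--Kleitman inequality by induction on the dimension $n$. The base case $n=1$ is handled by direct case analysis: the monotone subsets of $\{0,1\}$ are $\emptyset$, $\{1\}$, and $\{0,1\}$, and the desired inequality $\mu(A \cap B) \geq \mu(A)\mu(B)$ can be checked by hand for each pair.

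For the inductive step, I would condition on the last coordinate. Given monotone $A, B \subset \{0,1\}^n$, define the slices $A_i = \{x \in \{0,1\}^{n-1} : (x,i) \in A\}$ for $i = 0,1$, and analogously $B_0, B_1$. These slices are themselves monotone subsets of $\{0,1\}^{n-1}$, and the monotonicity of $A$ and $B$ further implies the containments $A_0 \subseteq A_1$ and $B_0 \subseteq B_1$ (flipping the last coordinate from $0$ to $1$ can only add points). Let $\mu'$ denote the uniform measure on $\{0,1\}^{n-1}$, and write $a_i = \mu'(A_i)$, $b_i = \mu'(B_i)$. Then $\mu(A) = (a_0+a_1)/2$, $\mu(B) = (b_0+b_1)/2$, and
\[
\mu(A \cap B) = \tfrac{1}{2}\bigl(\mu'(A_0 \cap B_0) + \mu'(A_1 \cap B_1)\bigr).
\]

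Applying the inductive hypothesis on each slice gives $\mu'(A_i \cap B_i) \geq a_i b_i$ for $i=0,1$. It therefore suffices to establish
\[
\tfrac{1}{2}(a_0 b_0 + a_1 b_1) \geq \tfrac{1}{4}(a_0+a_1)(b_0+b_1),
\]
which rearranges to $(a_1 - a_0)(b_1 - b_0) \geq 0$. This inequality is precisely what the containments $A_0 \subseteq A_1$ and $B_0 \subseteq B_1$ supply, since both factors are then nonnegative. I do not anticipate any serious obstacle in this argument: the only delicate point is noting that the slices inherit monotonicity from $A$ and $B$, which is immediate from the definition, and the final algebraic step is a single line. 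An alternative would be to quote the FKG inequality as a black box, but the direct induction is cleaner and fully self-contained.
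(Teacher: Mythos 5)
Your proof is correct and complete. Note that the paper does not prove this statement at all: it quotes Theorem~\ref{Kleitman} as a classical result of Harris and Kleitman (with citations) and uses it only as background for the correlation results in Section~\ref{sec:sub:Kleitman-Average}, so there is no in-paper argument to compare against. Your induction on $n$ --- slicing on the last coordinate, observing that monotonicity gives $A_0 \subseteq A_1$ and $B_0 \subseteq B_1$, applying the inductive hypothesis on each slice, and reducing the remaining inequality to $(a_1-a_0)(b_1-b_0) \geq 0$ --- is the standard proof of the Harris--Kleitman inequality, and every step (including the base case enumeration of the monotone subsets of $\{0,1\}$) checks out.
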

In~\cite{Talagrand2}, Talagrand established a lower bound on the
correlation in terms of how much the two sets depend
simultaneously on the same coordinates.
\begin{theorem}[Talagrand]
Let $A,B$ be monotone subsets of $\{0,1\}^n$ endowed with the
uniform measure $\mu$. Then
\begin{equation}
\mu(A \cap B) - \mu(A) \mu(B) \geq K \varphi(\sum_{i \leq n}
I_{1_A}(i) I_{1_B}(i)),
\label{Eq:Talagrand-Correlation}
\end{equation}
where $\varphi(x)=x/\log(e/x)$ and $K$ is a universal constant.
\label{Thm:Talagrand-Correlation}
\end{theorem}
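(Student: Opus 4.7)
Since the statement is the classical Talagrand correlation inequality on the discrete cube (cited here to \cite{Talagrand2}), my plan is to follow Talagrand's original Fourier-analytic proof. First, expand $1_A$ and $1_B$ in the Fourier-Walsh basis with respect to the uniform measure on $\{0,1\}^n$, so that the covariance decomposes as
\[
\mu(A \cap B) - \mu(A)\mu(B) = \sum_{S \neq \emptyset} \hat{1_A}(S)\,\hat{1_B}(S).
\]
Isolate the level-$1$ terms. For a monotone Boolean function $f$, the singleton Fourier coefficient equals $I_f(i)/2$, so the sum over singletons is exactly $\tfrac{1}{4}\sum_i I_{1_A}(i)\,I_{1_B}(i)$. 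If this were the whole covariance, one would already obtain the theorem (with $\varphi$ replaced by the identity map); the content of the inequality is therefore that the levels $|S|\geq 2$ do not swamp the level-$1$ contribution and only inflate it by the logarithmic factor in $\varphi(x)=x/\log(e/x)$.

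To control the high-frequency part, I would use the Bonami-Beckner hypercontractive inequality $\|T_\rho f\|_2 \leq \|f\|_{1+\rho^2}$, which gives $\sum_S \rho^{2|S|}\hat{1_A}(S)^2 \leq \mu(A)^{2/(1+\rho^2)}$ and likewise for $B$. Split the covariance at a cutoff level $k$: bound the low-level part above by the singleton sum $x = \sum_i I_{1_A}(i)\,I_{1_B}(i)$ times a geometric factor in $k$, and bound the high-level part via Cauchy-Schwarz combined with the hypercontractive estimate applied to $A$ and $B$. The cutoff $k$ (equivalently $\rho$) is free, and the balance point between the two pieces is $k \sim \log(e/x)$; plugging this in produces $\varphi(x)$ on the right-hand side.

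The main obstacle in executing this plan is the parameter optimization: obtaining the sharp $\varphi(x)=x/\log(e/x)$ rather than a weaker function requires a delicate interplay between the cutoff level, the hypercontractive parameter $\rho$, and the measures $\mu(A), \mu(B)$. The monotonicity assumption enters precisely at the identification of level-$1$ Fourier coefficients with influences; dropping it allows the singleton sum to vanish or even change sign, and no such correlation lower bound is possible in that generality. Given the paper's overall theme, it would also be natural to derive this from a continuous-cube version using the monotonization and discretization technology of Section~\ref{sec:lemmas}, but since the theorem appears here as a cited tool of Talagrand, a direct discrete Fourier proof is the most straightforward route.
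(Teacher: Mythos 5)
The paper does not prove this statement at all: Theorem~\ref{Thm:Talagrand-Correlation} is quoted verbatim from Talagrand's paper \cite{Talagrand2} and is used only as background for Section~\ref{sec:sub:Kleitman-Average}, so there is no internal proof to compare against. Judged on its own merits, your sketch assembles the right toolbox (Walsh expansion of the covariance, the identity $\hat{1_A}(\{i\})=I_{1_A}(i)/2$ for monotone sets, hypercontractivity, and an optimized noise/level parameter of order $\log(e/x)$), but the central step as you describe it has a genuine gap. In the splitting ``bound the low-level part by the singleton sum, bound the high-level part by Cauchy--Schwarz plus hypercontractivity,'' the intermediate levels $2\leq |S|\leq k$ are not controlled by the singleton sum: for monotone $A,B$ the coefficients $\hat{1_A}(S)$ with $|S|\geq 2$ can have either sign, and Cauchy--Schwarz together with $\|T_\rho 1_A\|_2\leq \mu(A)^{1/(1+\rho^2)}$ only bounds these terms by quantities of order $\mu(A)\mu(B)$, which can dwarf $\frac14\sum_i I_{1_A}(i)I_{1_B}(i)$. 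So monotonicity must enter beyond the identification of level-one coefficients with influences.

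The way Talagrand (and the standard modern rendition) closes this gap is to use the semigroup representation
\begin{equation*}
\mu(A\cap B)-\mu(A)\mu(B)\;=\;\int_0^1\sum_{i\leq n}\big\langle D_i 1_A,\,T_\rho D_i 1_B\big\rangle\,d\rho ,
\end{equation*}
where $D_i$ is the discrete derivative; monotonicity gives $D_i1_A, D_i1_B\geq 0$, so every term is nonnegative (this already reproves Harris--Kleitman), and hypercontractivity is applied to the \emph{derivative} functions, whose $L^p$ norms are powers of the influences, to show that $\langle D_i1_A, T_\rho D_i1_B\rangle$ stays comparable to $I_{1_A}(i)I_{1_B}(i)$ for $\rho$ up to roughly $1/\log(e/x)$. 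Even then, a per-coordinate choice of $\rho$ only yields $\sum_i\varphi\big(I_{1_A}(i)I_{1_B}(i)\big)$, which is \emph{weaker} than the stated $\varphi\big(\sum_i I_{1_A}(i)I_{1_B}(i)\big)$ since $\varphi(x)=x/\log(e/x)$ is superadditive in the wrong direction; obtaining $\varphi$ of the sum requires a single global choice of the noise parameter and a more careful handling of coordinates with very small influence products, which is exactly the delicate optimization you flag but do not carry out. Finally, your suggestion to derive the theorem from a continuous-cube version via the paper's monotonization/discretization machinery runs against the paper's own remark that such a continuous generalization of Theorem~\ref{Thm:Talagrand-Correlation} appears out of reach with these techniques; the direct discrete route is indeed the right one, but it needs the derivative/semigroup form of the argument rather than a level cutoff on $1_A,1_B$ themselves.
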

In~\cite{Keller} it was shown that when the correlation is
averaged amongst all $A,B \in T$ for any family $T$ of monotone
subsets of $\{0,1\}^n$, the lower bound asserted in
Theorem~\ref{Thm:Talagrand-Correlation} can be improved.
\begin{proposition}[\cite{Keller}]
Let $T$ be a family of monotone subsets of the discrete cube
endowed with the uniform measure $\mu$. Then
\begin{equation}
\sum_{A,B \in T} (\mu(A \cap B) - \mu(A) \mu(B)) \geq \frac{1}{4}
\sum_{A,B \in T} \sum_{i \leq n} I_{1_A}(i) I_{1_B}(i).
\label{Eq:Our-Theorem-Correlation}
\end{equation}
\label{Prop:Our-Theorem-Correlation}
\end{proposition}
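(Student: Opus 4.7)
The plan is to work in the Fourier--Walsh expansion on the discrete cube $\{0,1\}^n$ with the uniform measure, and to exploit the simple but crucial observation that when the covariances are summed over pairs in $T$, each nonempty Fourier level contributes a nonnegative square. This makes it unnecessary to separately bound high-level Fourier coefficients for individual pairs $A,B$, which is precisely what forces Talagrand to use hypercontractivity in the proof of Theorem~\ref{Thm:Talagrand-Correlation}.

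First, I would record the standard Parseval identity $\mu(A\cap B)-\mu(A)\mu(B) = \sum_{S\neq\emptyset}\hat{1}_A(S)\hat{1}_B(S)$, obtained by expanding $1_A$ and $1_B$ in the Fourier--Walsh basis and subtracting off the $S=\emptyset$ term. Summing this identity over ordered pairs $A,B\in T$ and interchanging the order of summation gives
\[
\sum_{A,B\in T}\bigl(\mu(A\cap B)-\mu(A)\mu(B)\bigr) \;=\; \sum_{S\neq\emptyset}\Bigl(\sum_{A\in T}\hat{1}_A(S)\Bigr)^{2}.
\]
Every term on the right is a nonnegative square, so discarding all levels with $|S|\geq 2$ yields the lower bound
\[
\sum_{A,B\in T}\bigl(\mu(A\cap B)-\mu(A)\mu(B)\bigr) \;\geq\; \sum_{i=1}^n \Bigl(\sum_{A\in T}\hat{1}_A(\{i\})\Bigr)^{2}.
\]

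Next, I would use the standard fact that for any monotone Boolean function $f$ on the uniform discrete cube one has $|\hat{f}(\{i\})| = I_f(i)/2$, and that the sign of $\hat{f}(\{i\})$ depends only on the Fourier convention, not on $f$. Consequently, for every $i$, the numbers $\{\hat{1}_A(\{i\})\}_{A\in T}$ all share the same sign, and therefore
\[
\Bigl(\sum_{A\in T}\hat{1}_A(\{i\})\Bigr)^{2} \;=\; \frac{1}{4}\Bigl(\sum_{A\in T} I_{1_A}(i)\Bigr)^{2} \;=\; \frac{1}{4}\sum_{A,B\in T} I_{1_A}(i)\, I_{1_B}(i).
\]
Summing over $i$ and combining with the previous display yields Inequality~(\ref{Eq:Our-Theorem-Correlation}) exactly.

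There is no substantive obstacle: the proof reduces to Parseval plus the degree-one formula for monotone functions, and the entire gain over Theorem~\ref{Thm:Talagrand-Correlation} comes from the fact that the map $(A,B)\mapsto\hat{1}_A(S)\hat{1}_B(S)$ turns into a square when summed over $T\times T$. The only subtlety worth handling carefully in the write-up is the sign convention for the degree-one Fourier coefficients of monotone sets, since it is precisely the uniform sign across monotone $A\in T$ that prevents any cancellation in $\bigl(\sum_A \hat{1}_A(\{i\})\bigr)^{2}$ and hence preserves the full factor of $\tfrac14$ on the right-hand side.
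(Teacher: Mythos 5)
Your proof is correct, and it is essentially the argument of the cited source: the paper quotes this proposition from~\cite{Keller}, and the method visible in the paper's own proof of Proposition~\ref{Prop:Correlation-New} is exactly your scheme --- expand the covariance over an orthonormal system, note that summing over $T\times T$ produces nonnegative squares, discard everything but the degree-one terms, and use that for monotone $A$ the degree-one coefficients equal $I_{1_A}(i)/2$ with a uniform sign. Your Fourier--Walsh write-up is just the discrete-cube specialization of that argument, so no gap and no genuinely different route.
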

Furthermore, it was shown in~\cite{Keller} that
Proposition~\ref{Prop:Our-Theorem-Correlation} can be generalized
to the correlation of monotone subsets of the continuous cube,
using Definition~\ref{Def:Tilde} of the influences in the
continuous case.
\begin{proposition}[\cite{Keller}]
Let $T$ be a family of monotone subsets of the continuous cube
endowed with the Lebesgue measure $\lambda$. Then
\begin{equation}
\sum_{A,B \in T} (\lambda(A \cap B) - \lambda(A) \lambda(B)) \geq
3 \sum_{A,B \in T} \sum_{i \leq n} \tilde{I}_{1_A}(i)
\tilde{I}_{1_B}(i).
\label{Eq:Our-Theorem-Correlation-Cont}
\end{equation}
\label{Prop:Our-Theorem-Correlation-Cont}
\end{proposition}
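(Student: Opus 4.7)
The plan is to reduce the continuous inequality to the discrete Proposition~\ref{Prop:Our-Theorem-Correlation} via the discretization of Section~\ref{sec:sub:discretization}, and then translate the resulting discrete bound back to the continuous $\tilde{I}$-influences. Since every $A \in T$ is already monotone by hypothesis, no monotonization step is needed; only discretization has to be applied.

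Concretely, fix a large integer $l$ and, for each $A \in T$, replace $1_A$ by its dyadic approximation that is constant on sub-cubes of side $2^{-l}$, identifying each sub-cube with its binary address to produce a monotone set $A' \subseteq \{0,1\}^{nl}$. Because the identification is measure-preserving and preserves monotonicity, $\mu(A' \cap B') - \mu(A')\mu(B') \to \lambda(A \cap B) - \lambda(A) \lambda(B)$ as $l \to \infty$. Applying Proposition~\ref{Prop:Our-Theorem-Correlation} to the family $T' = \{A' : A \in T\}$ and using the factorization of the sum over ordered pairs $\sum_{A,B} I_{1_{A'}}(i_j) I_{1_{B'}}(i_j) = (\sum_A I_{1_{A'}}(i_j))^2$ yields, in the limit,
\[
\sum_{A, B \in T} (\lambda(A \cap B) - \lambda(A)\lambda(B)) \;\geq\; \frac{1}{4} \sum_{i=1}^n \sum_{j=1}^l \left( \sum_{A \in T} I_{1_{A'}}(i_j) \right)^{\!2}.
\]

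The heart of the argument is then to establish, for each coordinate $i$, the fiber-averaged inequality
\[
\sum_{j=1}^l \left( \sum_{A \in T} I_{1_{A'}}(i_j) \right)^{\!2} \;\geq\; 12 \left( \sum_{A \in T} \tilde{I}_{1_A}(i) \right)^{\!2}.
\]
Writing $I_{1_{A'}}(i_j) = \mathbb{E}_x I_{g_A^x}(j)$ and $\tilde{I}_{1_A}(i) = \mathbb{E}_x [\tau_A(x)(1-\tau_A(x))]$, where $g_A^x$ is the discrete step function on $\{0,1\}^l$ induced by the monotone fiber of $1_A$ at $x$ in direction $i$ and $\tau_A(x) \in [0,1]$ is its continuous threshold, the problem reduces to an inequality between $\ell^2$-norms of vectors of bit-level influences on one side and variance-based influences on the other.

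The main obstacle is that this last inequality cannot be handled pair-by-pair: for $\tau_A = 1/2$ and $\tau_B \to 0$ the cross term $\sum_j I_{g_{\tau_A}}(j) I_{g_{\tau_B}}(j)$ equals $2\tau_B$, while $12 \tau_A(1-\tau_A)\tau_B(1-\tau_B) \approx 3\tau_B$, so the naive per-pair bound falls short. The correct strategy is to exploit the squared-sum structure on the discrete side, using the fact that diagonal contributions $\sum_j I_{g_{\tau_A}}(j)^2$ exceed $(\tau_A(1-\tau_A))^2$ by a factor growing like $\log(1/\min(\tau_A, 1-\tau_A))$, thereby providing enough slack to absorb the unfavorable off-diagonal terms. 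A case analysis on the binary expansion of $\lfloor \tau \cdot 2^l \rfloor$, together with Jensen's inequality to push expectations through the squares, completes the proof and, after passing to the limit $l \to \infty$, produces the stated constant $3$.
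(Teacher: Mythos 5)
Your reduction is set up correctly: after discretization, Proposition~\ref{Prop:Our-Theorem-Correlation} does give $\sum_{A,B\in T}(\lambda(A\cap B)-\lambda(A)\lambda(B))\geq \frac14\sum_{i}\sum_{j}\bigl(\sum_{A}I_{1_{A'}}(i_j)\bigr)^2$ up to errors vanishing as $l\to\infty$, so everything hinges on the per-coordinate inequality $\sum_{j}\bigl(\sum_{A}I_{1_{A'}}(i_j)\bigr)^2\geq 12\bigl(\sum_{A}\tilde I_{1_A}(i)\bigr)^2$, and you correctly note that its naive pairwise version fails (thresholds $1/2$ and $\tau_B\to 0$). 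But this key inequality is exactly where the proposal stops being a proof, and the mechanism you invoke cannot deliver it. Jensen runs in the wrong direction here (it bounds the square of an average from above by the average of squares, while you need a lower bound on the norm of an average), and the ``log-factor slack in the diagonal'' cannot be spent loosely, because the constant $12$ is exactly attained: for the single threshold $\tau=1/3$ the bit influences are $I_j=\frac23\,2^{-(j-1)}$ in the $l\to\infty$ limit, so $\sum_j I_j^2=\frac{16}{27}=12\,(\tau(1-\tau))^2$, and certain two-point mixtures (e.g.\ $\{1/3,\,2^{-K}\}$) are tight as well. Hence any case analysis that loses a constant factor anywhere cannot recover the stated constant $3$; an exact argument is required, and none is given.

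The exact argument that would complete your route is a linear certificate: for a monotone fiber with threshold $\tau$ one has the identity $\sum_j 2^{-j}I_{g_\tau}(j)=2\tau(1-\tau)$ (up to $O(2^{-l})$), since for a monotone function $\mathbb{E}[g_\tau\rho_j]=\frac12 I_{g_\tau}(j)$ for the Rademacher function $\rho_j$ of the $j$-th bit and $\sum_j 2^{-j}\rho_j(x)=2x-1$; taking $\phi_j=\sqrt3\,2^{-j}$, which has $\|\phi\|_2=1$, Cauchy--Schwarz gives $\bigl\|\sum_A \mathbb{E}_x v_{A,x}\bigr\|_2\geq\langle\phi,\sum_A\mathbb{E}_x v_{A,x}\rangle=\sqrt{12}\,\sum_A\tilde I_{1_A}(i)$, which is precisely the missing inequality. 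Note, however, that this certificate is nothing but the binary expansion of the function $r(t)=\sqrt3(2t-1)$: it reconstructs the proof given in~\cite{Keller} (and reproduced in this paper as the case $\alpha=1$, $c(\alpha)=3$ of Proposition~\ref{Prop:Correlation-New}), where $\tilde I_{1_A}(i)$ is identified with the Fourier coefficient of $1_A$ against the orthonormal system $r_i(t)=\sqrt3(2t-1)$ and the correlation bound follows from Lemma~7 of~\cite{Keller}. Once one has this identity, the detour through discretization and Proposition~\ref{Prop:Our-Theorem-Correlation} is unnecessary, so the fixed version of your argument collapses to the paper's own.
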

It was also shown in~\cite{Keller} that the natural generalization
of Proposition~\ref{Prop:Our-Theorem-Correlation} does not hold
under Definition~\ref{Def:BKKKL} of the influences in the
continuous case.

\medskip

\noindent Using $h$-influences, the assertion of
Proposition~\ref{Prop:Our-Theorem-Correlation-Cont} can be
generalized significantly:
\begin{proposition}
Let $1/2 < \alpha \leq 1$, and let $h_{\alpha}:[0,1] \rightarrow
[0,1]$ be defined by $h_{\alpha}(t)=[t(1-t)]^{\alpha}$. There
exists $c=c(\alpha)$ such that for any family $T$ of monotone
subsets of the continuous cube endowed with the Lebesgue measure
$\lambda$,
\begin{equation}
\sum_{A,B \in T} (\lambda(A \cap B) - \lambda(A) \lambda(B)) \geq
c(\alpha) \sum_{A,B \in T} \sum_{i \leq n} I^{h_{\alpha}}_{1_A}(i)
I^{h_{\alpha}}_{1_B}(i).
\label{Eq:Correlation-New}
\end{equation}
\label{Prop:Correlation-New}
\end{proposition}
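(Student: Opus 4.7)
The plan is to adapt the proof of Proposition~\ref{Prop:Our-Theorem-Correlation-Cont} (the case $\alpha=1$, established in~\cite{Keller}) to $\alpha\in(1/2,1]$. A direct reduction to the discrete cube via the discretization of Section~\ref{sec:sub:discretization} is not viable here: that procedure is calibrated to $h$-influences satisfying $h\geq Ent$, while $h_\alpha$ for $\alpha<1$ genuinely exceeds $Ent$ near the endpoints and carries fine fiber-threshold information that is lost under discretization. I would instead work in the continuous cube via the Hoeffding/ANOVA decomposition. For each $A\in T$, write $1_A-\lambda(A)=\sum_{S\neq\emptyset}(1_A)_S$, where $(1_A)_S$ depends only on $x_S$ and integrates to zero over any $x_i$ with $i\in S$. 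Then
\[
\sum_{A,B\in T}(\lambda(A\cap B)-\lambda(A)\lambda(B))=\sum_{S\neq\emptyset}\Big\|\sum_{A\in T}(1_A)_S\Big\|_2^2 \geq \sum_i\Big\|\sum_{A\in T}(1_A)_{\{i\}}\Big\|_2^2,
\]
and rewriting the right-hand side of~(\ref{Eq:Correlation-New}) as $\sum_i(\sum_{A\in T}I^{h_\alpha}_{1_A}(i))^2$, the proposition reduces to a per-coordinate bilinear inequality of the form $\|\sum_A (1_A)_{\{i\}}\|_2^2 \geq c(\alpha)(\sum_A I^{h_\alpha}_{1_A}(i))^2$.

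For monotone $A$, $(1_A)_{\{i\}}(t)=P_A(t)-\lambda(A)$ is a monotone mean-zero function on $[0,1]$, where $P_A$ is the CDF of the fiber-threshold random variable $T_A$ in direction $i$; in these terms $I^{h_\alpha}_{1_A}(i)=\mathbb{E}[T_A(1-T_A)]^\alpha$. A per-pair pointwise inequality $\langle(1_A)_{\{i\}},(1_B)_{\{i\}}\rangle\geq c(\alpha)I^{h_\alpha}_{1_A}(i)I^{h_\alpha}_{1_B}(i)$ already fails for pairs of dictators with extreme thresholds, so the argument must exploit the summed form. Since $\sum_{A\in T}(1_A)_{\{i\}}$ is itself a monotone mean-zero function on $[0,1]$, one can try to bound its $L^2$-norm below by a suitable aggregate $h_\alpha$-moment via a rearrangement argument. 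The hypothesis $\alpha>1/2$ enters at the step where Jensen's inequality is invoked on the concave map $t\mapsto t^\alpha$ with $2\alpha>1$: this is precisely the threshold at which the comparison $\|u\|_2^2 \geq c\cdot(\mathbb{E}[T(1-T)]^\alpha)^2$, for a monotone mean-zero $u$ arising as the centered CDF of $T$, survives with a positive constant.

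The main obstacle will be executing this rearrangement step with a uniform constant across the family $T$. One must extract quantitative constructive interference from the nonnegativity of all cross terms $\langle(1_A)_{\{i\}},(1_B)_{\{i\}}\rangle$ (granted by FKG), so that the aggregate centered marginal $\sum_{A\in T}(P_A-\lambda(A))$ cannot simultaneously be widely spread out and fail to witness the aggregate $h_\alpha$-influence. At $\alpha=1$ the relevant functional is the variance, and the argument in~\cite{Keller} proceeds by direct computation; for $\alpha\in(1/2,1)$ the relevant functional becomes an $L^p$-type moment with $p=1/\alpha\in(1,2)$, and one expects the constant $c(\alpha)$ to deteriorate as $\alpha\to 1/2^+$, in line with the strict inequality in the hypothesis.
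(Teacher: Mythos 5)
Your reduction is sound and in fact coincides with the skeleton of the paper's argument: the ANOVA step $\sum_{A,B}(\lambda(A\cap B)-\lambda(A)\lambda(B))=\sum_{S\neq\emptyset}\|\sum_A (1_A)_S\|_2^2\geq\sum_i\|\sum_A (1_A)_{\{i\}}\|_2^2$ is exactly what underlies Lemma~7 of~\cite{Keller}, which the paper invokes, and rewriting the right-hand side of~(\ref{Eq:Correlation-New}) as $\sum_i\bigl(\sum_A I^{h_\alpha}_{1_A}(i)\bigr)^2$ is correct. The gap is that everything after that point --- the per-coordinate inequality $\|\sum_A (1_A)_{\{i\}}\|_2^2\geq c(\alpha)\bigl(\sum_A I^{h_\alpha}_{1_A}(i)\bigr)^2$, which is the entire content of the proposition --- is only gestured at. ``Bound the $L^2$-norm of the aggregate centered CDF below by an aggregate $h_\alpha$-moment via a rearrangement argument,'' ``Jensen on $t\mapsto t^\alpha$ with $2\alpha>1$,'' and ``quantitative constructive interference from FKG'' are not an argument; in particular it is unclear how a rearrangement of $\sum_A(P_A-\lambda(A))$ could control the family-summed quantity $\sum_A\mathbb{E}[(T_A(1-T_A))^\alpha]$ with a constant uniform in $|T|$, and you yourself flag this as the unresolved obstacle. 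Your diagnosis of where $\alpha>1/2$ should enter (a Jensen step with exponent $2\alpha$) is also not where it actually enters.

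The step you are missing is closed in the paper by a choice of dual test function rather than by rearrangement. Set $r_i(t)=c(\alpha)^{1/2}\,\alpha(2t-1)[t(1-t)]^{\alpha-1}$ with $c(\alpha)^{-1}=\int_0^1\bigl(\alpha(2t-1)[t(1-t)]^{\alpha-1}\bigr)^2dt$; this is finite precisely because $2(\alpha-1)>-1$, i.e.\ $\alpha>1/2$ --- that is the real role of the hypothesis. Since $\int_{t_0}^1\alpha(2t-1)[t(1-t)]^{\alpha-1}dt=[t_0(1-t_0)]^\alpha$, pairing a monotone Boolean fiber restriction with threshold $t_0$ against $r_i$ gives exactly $c(\alpha)^{1/2}h_\alpha(\mathbb{E}f_i^x)$, hence $\langle (1_A)_{\{i\}},r_i\rangle=c(\alpha)^{1/2}I^{h_\alpha}_{1_A}(i)$. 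The pairing is \emph{linear} in the law of the threshold $T_A$ and in $A\in T$, so it sums over the family with no loss, and Cauchy--Schwarz against the unit vector $r_i$ yields $\|\sum_A(1_A)_{\{i\}}\|_2\geq c(\alpha)^{1/2}\sum_A I^{h_\alpha}_{1_A}(i)$, which is your per-coordinate inequality (and, plugged into your first display or directly into Lemma~7 of~\cite{Keller}, the proposition). This also explains why the per-pair failure you correctly observe is harmless: the bound is obtained through a single linear functional, not through pairwise comparisons. Without this (or some equally explicit) mechanism, your proposal is a plan rather than a proof.
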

\begin{proof}
The proof is similar to the proof of
Proposition~\ref{Prop:Our-Theorem-Correlation-Cont}, presented
in~\cite{Keller}. The main observation in the proof is that for
$1/2 < \alpha \leq 1$, the influences $I^{h_{\alpha}}_{1_A}(i)$
can be represented as the Fourier coefficients of the function
$1_A$ with respect to an appropriate orthonormal system of
functions in $L^2([0,1])$. Consider the functions $r'_i:(0,1)
\rightarrow \mathbb{R}$ defined by:
\[
r'_i(t)=\alpha(2t-1)[t(1-t)]^{\alpha-1}.
\]
Since $\int_0^1 r'_i(t)dt=0$, the functions $\{r'_i\}_{i=1}^n$
along with the constant function constitute an orthogonal system
in $L^2([0,1])$. Let
\[
c(\alpha)=\Big(\int_0^1 (\alpha(2t-1)[t(1-t)]^{\alpha-1})^2 dt
\Big)^{-1}.
\]
Note that the integral is finite since $\alpha>1/2$. The functions
$r_i(t)=c(\alpha)^{1/2} r'_i(t)$ satisfy $\int_0^1 r_i^2(t)dt =
1$, and hence the system $\{r_i\}_{i=1}^n$ along with the constant
function $r_{\emptyset} \equiv 1$ is an orthonormal system of
functions in $L^2([0,1])$. By~(\cite{Keller}, Lemma~7), this
implies that for any family $T$ of monotone subsets of the
continuous cube,
\begin{equation}
\sum_{A,B \in T} (\lambda(A \cap B) - \lambda(A) \lambda(B)) \geq
\sum_{A,B \in T} \sum_{i \leq n} \int_{[0,1]^n} 1_A r_i
(x)d\lambda(x) \int_{[0,1]^n} 1_B r_i(x)d\lambda(x).
\label{Eq:Correlation-New-2}
\end{equation}
We observe that for all $1 \leq i \leq n$,
\[
\int_{[0,1]^n} 1_A r_i (x)d\lambda(x) = c(\alpha)^{1/2}
I^{h_{\alpha}}_{1_A},
\]
and similarly for $1_B$. Indeed, consider a single fiber $s_i(x)$,
and denote the restriction of the function $1_A$ to the fiber
$s_i(x)$ by $f_i^x:[0,1] \rightarrow \{0,1\}$. Since the function
$f_i^x$ is Boolean and monotone, there exists $t_0$ such that for
all $t<t_0$, $f_i^x(t)=0$, and for all $t>t_0$, $f_i^x(t)=1$. In
this case we have
\[
\int_0^1 f_i^x r_i(t)dt = \int_{t_0}^1 c(\alpha)^{1/2}
\alpha(2t-1)[t(1-t)]^{\alpha-1} dt = c(\alpha)^{1/2}
[t_0(1-t_0)]^{\alpha} = c(\alpha)^{1/2}
h_{\alpha}(\mathbb{E}f_i^x).
\]
Hence,
\[
\int_{[0,1]^n} 1_A r_i(x)d\lambda(x) = \mathbb{E}_{x \in [0,1]^n}
\int_0^1 f_i^x r_i(t)dt = c(\alpha)^{1/2} \mathbb{E}_{x \in
[0,1]^n} h_{\alpha}(\mathbb{E}f_i^x) = c(\alpha)^{1/2}
I^{h_{\alpha}}_{1_A}(i).
\]
Substituting into Inequality~(\ref{Eq:Correlation-New-2}), the
assertion of the proposition follows.
\end{proof}

\noindent The following example demonstrates that the natural
generalization of Proposition~\ref{Prop:Our-Theorem-Correlation}
does not hold for $h$-influences of the class
$h_{\alpha}(t)=[t(1-t)]^{\alpha}$ with $\alpha<1/2$.
\begin{example}
Let $A \subset [0,1]^n$ be defined by
\[
A=\{x \in [0,1]^n:(\forall i: x_i \geq 1/n)\}.
\]
Clearly, $\lambda(A)=(1-1/n)^n \approx 1/e$, and hence,
\[
\lambda(A \cap A) - \lambda(A) \lambda(A) \approx (1/e)-(1/e)^2 =
\Theta(1).
\]
The function $1_A$ is non-constant on a fiber $s_k(x)$ if and only
if $x_j \geq 1/n$ for all $j \neq k$. Hence, the
$h_{\alpha}$-influence of the $k$-th coordinate on $1_A$ is
\[
I^{h_{\alpha}}_{1_A}(k)=(1-1/n)^{n-1} (\frac{n-1}{n^2})^{\alpha} =
\Theta(n^{-\alpha}),
\]
and thus,
\[
\sum_k I^{h_{\alpha}}_{1_A}(k) I^{h_{\alpha}}_{1_A}(k) =
\Theta(n^{1-2\alpha}).
\]
Therefore, if $\alpha<1/2$ then the assertion of
Proposition~\ref{Prop:Correlation-New} is far from being correct,
even for the self-correlation of the set $A$.
\end{example}

\medskip

\noindent If the assertion of
Proposition~\ref{Prop:Correlation-New} holds for $\alpha=1/2$,
then the above example shows its tightness. However, we weren't
able to prove the assertion in this case. It seems challenging to
find a generalization of Talagrand's
Theorem~\ref{Thm:Talagrand-Correlation} to the continuous cube
using $h$-influences, but it seems that the techniques of the
current paper cannot provide such result.

\section{Tightness of Results}
\label{sec:Tightness}

The tightness of most of our results can be shown using a biased
variant of the {\it tribes} function presented in~\cite{BL}. We
note that this function was already used to show the tightness of
the results in~\cite{Friedgut1,Friedgut-Kalai,Talagrand1}. In the
construction below we assume that $q \leq 1/2$. The case $q>1/2$
is treated in the end of this section.

\subsection{Tightness of Theorem~\ref{Our-Main-Theorem}}

Consider the discrete cube $\{0,1\}^n$ endowed with the
product measure $\mu_q$. Partition the set $\{1,2,\ldots,n\}$ into
sets $\{T_1,T_2,\ldots,T_{n/r}\}$ of size
\[
r=\frac{\log n - \log \log n + \log \log (1/q)}{\log(1/q)}
\]
each, and define $f:\{0,1\}^n \rightarrow \{0,1\}$ by setting
$f(x)=1$ if and only if there exists $i$ such that $x_j=1$ for all
$j \in T_i$. This function can be transformed to a function on the
continuous cube as follows: Define $G:[0,1] \rightarrow \{0,1\}$
by $G(x)=0$ if $x \leq 1-q$, and $G(x)=1$ if $x>1-q$, and let
$\tilde{f}:[0,1]^n \rightarrow \{0,1\}$ be defined by
\[
\tilde{f}(x_1,x_2,\ldots,x_n)=f(G(x_1),G(x_2),\ldots,G(x_n)).
\]
It is easy to see that
\[
\mathbb{E}\tilde{f} = 1-(1-q^r)^{n/r} \approx 1-1/e,
\]
where the expectation is with respect to the Lebesgue measure on
the continuous cube. The function $\tilde{f}$ is non-constant on a
fiber $s_k(x)$ where $k \in T_i$ if for all $j \in T_i$ we have
$x_j>1-q$, and for each $i' \neq i$, there exists $j \in T_{i'}$
such that $x_j \leq 1-q$. In each of the non-constant fibers we
have $\mathbb{E}\tilde{f}_k^x = q$. Thus, for all $h:[0,1]
\rightarrow [0,1]$ and for all $1 \leq k \leq n$,
\[
I^h_{\tilde{f}}(k) = q^{r-1} (1-q^r)^{(n/r)-1} h(q) \approx
\frac{\log n}{(en) q \log (1/q)} h(q) \leq \frac{2\log n}{(en)
Ent(q)} h(q),
\]
where the last inequality holds since for all $q \leq 1/2$ we have
$q \log (1/q) \geq (1-q) \log (1/(1-q))$. This example shows that
the assertion of Theorem~\ref{Our-Main-Theorem} is tight:
\begin{proposition}
Let $h:[0,1] \rightarrow [0,1]$ and let $\epsilon>0$. If there
exists $0<q<1$ such that $h(q) \leq \epsilon Ent(q)$, then there
exists a function $f:[0,1]^n \rightarrow \{0,1\}$ such that
$\mathbb{E}f=\Theta(1)$, and for all $1\leq k \leq n$, the
h-influence of the $k$-th variable on $f$ satisfies $I_f^h(k) \leq
c \epsilon \log n/n$, where $c$ is a universal constant.
\label{Prop:Our-Tightness}
\end{proposition}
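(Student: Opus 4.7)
The plan is to simply instantiate the biased tribes construction presented in the paragraphs immediately preceding the proposition with the specific value of $q$ guaranteed by the hypothesis. First I would assume $q \leq 1/2$, partition $\{1,\ldots,n\}$ into tribes of the indicated size $r = (\log n - \log\log n + \log\log(1/q))/\log(1/q)$, and form the function $\tilde{f}:[0,1]^n \to \{0,1\}$ by thresholding each coordinate at $1-q$ via the map $G$ and then applying the OR-of-ANDs tribes formula. The computation already carried out in the text gives $\mathbb{E}\tilde{f} = 1-(1-q^r)^{n/r} \approx 1 - 1/e = \Theta(1)$, establishing the first requirement.

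The second requirement is an immediate substitution. The text has shown that for every $h:[0,1]\to[0,1]$ and every coordinate $k$,
\[
I^h_{\tilde{f}}(k) \;=\; q^{r-1}(1-q^r)^{(n/r)-1}\, h(q) \;\leq\; \frac{2\log n}{en\cdot Ent(q)}\, h(q),
\]
where the final inequality uses $q\log(1/q) \geq (1-q)\log(1/(1-q))$, valid for $q\leq 1/2$. Plugging in the hypothesis $h(q) \leq \epsilon \, Ent(q)$ cancels the $Ent(q)$ factor and yields
\[
I^h_{\tilde{f}}(k) \;\leq\; \frac{2\epsilon \log n}{en} \;=\; c\epsilon\,\frac{\log n}{n},
\]
with $c = 2/e$, which is exactly the bound claimed by the proposition.

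The only nontrivial issue is the case $q > 1/2$, for which the inequality $q\log(1/q) \geq (1-q)\log(1/(1-q))$ fails. I would handle this by a symmetric variant of the construction: replace the threshold map $G$ by its reflection $G'(x) = 1$ iff $x \leq q$, so that $\Pr[G'(x)=1] = q$ and the tribe size is recalibrated as $r' = (\log n - \log\log n + \log\log(1/(1-q)))/\log(1/(1-q))$, driving $\mathbb{E}\tilde{f}\approx 1-1/e$ once more. On every non-constant fiber the expectation remains $q$, so the $h$-influence is still proportional to $h(q)$, and the role previously played by $q\log(1/q)$ is now played by $(1-q)\log(1/(1-q))$, which under the reversed inequality dominates $q\log(1/q)$. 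Since $Ent(q) = Ent(1-q)$, the bound $I^h_{\tilde{f}}(k) \leq (2\log n)/(en\cdot Ent(q))\cdot h(q)$ holds with the same constant, and the hypothesis again produces the desired $c\epsilon\log n/n$ upper bound. I expect no further obstacles, as the entire argument is just bookkeeping around a construction whose analysis has already been carried out.
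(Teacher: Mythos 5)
Your $q\le 1/2$ argument is exactly the paper's: the biased tribes function composed with the threshold map $G$, the computation $\mathbb{E}\tilde f=1-(1-q^r)^{n/r}\approx 1-1/e$, the influence formula $I^h_{\tilde f}(k)=q^{r-1}(1-q^r)^{(n/r)-1}h(q)\le \frac{2\log n}{en\,Ent(q)}h(q)$, and the cancellation against the hypothesis $h(q)\le\epsilon\,Ent(q)$. That part is correct.

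The gap is in your treatment of $q>1/2$, which cannot be skipped since $h$ is not assumed symmetric (you cannot trade $q$ for $1-q$ in the hypothesis). Replacing $G$ by the reflected map $G'(x)=1$ iff $x\le q$ changes nothing probabilistically: $\Pr[G'(x)=1]$ is still $q$, each tribe is still an AND of events of probability $q$, so every quantity in the analysis still involves $q\log(1/q)$, not $(1-q)\log(1/(1-q))$. In particular, with your recalibrated tribe size $r'$ built from $\log(1/(1-q))$, the tribe probability is $q^{r'}=2^{-r'\log(1/q)}$, which for $q$ bounded away from $1/2$ is polynomially larger than $\log n/n$; then $\mathbb{E}\tilde f=1-(1-q^{r'})^{n/r'}\to 1$, not $\approx 1-1/e$, and the example becomes vacuous as a tightness witness (the claim ``the role of $q\log(1/q)$ is now played by $(1-q)\log(1/(1-q))$'' is not justified by the reflection and is false for this construction). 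The fix — which is what the paper does — is to dualize the function rather than the threshold: take the dual $\overline{f}(x)=1-f(1-x_1,\ldots,1-x_n)$ of the tribes function at parameter $1-q\le 1/2$ (equivalently, an AND-of-ORs tribes function with threshold still at $1-q$). The paper's identities $\mathbb{E}_q f=1-\mathbb{E}_{1-q}\overline{f}$ and $I^q_f(i)=I^{1-q}_{\overline{f}}(i)$ then give expectation $\approx 1/e$ and non-constant-fiber probability $(1-q)^{r-1}(1-(1-q)^r)^{(n/r)-1}$, while the conditional expectation on each non-constant fiber remains $q$ (the restricted function is $1$ iff $x_k>1-q$), so the $h$-influence is still proportional to $h(q)$ and the inequality $(1-q)\log\frac{1}{1-q}\ge\frac{1}{2}Ent(q)$, valid for $q\ge 1/2$, closes the argument. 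Note also that any variant which makes the fiber's conditional expectation $1-q$ instead of $q$ (e.g., tribes that are ANDs of the events $\{x_j>q\}$) would break the appeal to the hypothesis, since it would produce $h(1-q)$ rather than $h(q)$.
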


\subsection{Tightness of Proposition~\ref{Prop:Our-Talagrand-General}}

The same example shows the tightness of
Proposition~\ref{Prop:Our-Talagrand-General}. Indeed, if there
exists $q$ such that $h(q) \leq \epsilon Ent(q)$, then for the
corresponding function $\tilde{f}$, we have
\[
I^h_{\tilde{f}}(k) \leq \frac{2 \epsilon \log n}{en},
\]
for all $1 \leq k \leq n$. Since the function $g(x)=x/\log(4/3x)$
is monotone increasing in $x$, we have
\[
\sum_{k=1}^n \frac{I^h_{\tilde{f}}(k)}{\log\frac{4}{3I^h_{\tilde{f}}(k)}} 
= \sum_{k=1}^n g(I^h_{\tilde{f}}(k)) \leq n g
\Big(\frac{2 \epsilon \log n}{en} \Big) \approx c \frac{\epsilon
\log n}{\log (n/\epsilon)} \leq c' \epsilon,
\]
where the last inequality holds for $n \gg 1/\epsilon$. This
contradicts Inequality~(\ref{Eq:Our-Talagrand-General}) since for
$\tilde{f}$, the left hand side of the inequality is $\Theta(1)$.

\subsection{Tightness of Proposition~\ref{Prop:Continuous-Junta}}

In order to show the advantage of
Proposition~\ref{Prop:Continuous-Junta} over
Theorem~\ref{Thm:Continuous-Junta}, we consider a slight
modification of the tribes function examined above. For $0<q \leq
1/2$ and for $m<n$, we construct the tribes function
$\tilde{f}:[0,1]^m \rightarrow \{0,1\}$, as above. Then we define
$\overline{f}:[0,1]^n \rightarrow \{0,1\}$ by
$\overline{f}(x_1,x_2,\ldots,x_n)=\tilde{f}(x_1,x_2,\ldots,x_m)$.
Clearly, $\overline{f}$ depends on $m$ variables. By the
calculation presented above, we have
\[
\sum_k I^h_{\overline{f}}(k) = \sum_k I^h_{\tilde{f}}(k) \leq
\frac{2\log m}{e Ent(q)} h(q),
\]
and hence, for $h(t)=Ent(t)$, we get $\sum_k I^h_{\overline{f}}(k)
\leq 2\log m /e$. Therefore,
Proposition~\ref{Prop:Continuous-Junta} implies that
$\overline{f}$ can be approximated by a function depending on at
most $\exp(c\log m/\epsilon)$ variables, which is tight, up to the
$(c/\epsilon)$ factor. For comparison, note that $\sum_k
I_{\overline{f}}(k) \approx m/e$, and hence
Theorem~\ref{Thm:Continuous-Junta} implies approximation by a
function depending on $\exp(cm/\epsilon)$ variables, which is far
from being tight.

\subsection{Tightness of Proposition~\ref{Prop:Boundary}}

Finally, in order to show the tightness of
Proposition~\ref{Prop:Boundary}, we consider a balanced threshold
function on the biased discrete cube. Consider the discrete cube
$\{0,1\}^n$ endowed with the measure $\mu_q$, and let
\[
A=\{x \in \{0,1\}^n : \sum_i x_i > \lfloor nq \rfloor\}.
\]
Using the function $G$ defined above, $A$ can be transformed to
$\tilde{A} \subset [0,1]^n$. It is well-known that
$\lambda(\tilde{A})=\mu_q(A) = \Theta(1)$. We have
\[
\partial A = \{x \in \{0,1\}^n : \lfloor nq \rfloor\ \leq
\sum_i x_i \leq \lfloor nq \rfloor +1\},
\]
and hence it can be shown using Stirling's formula that
\[
\lambda(\partial \tilde{A}) = \mu_q(\partial A) \approx
\frac{2}{\sqrt{2 \pi n q(1-q)}}.
\]
The function $1_{\tilde{A}}$ can be non-constant on the fiber $s_k(x)$
only if $x \in \partial \tilde{A}$, and the expectation of
the function on each non-constant fiber is $q$. Thus,
\[
\sum_{i=1}^n I^h_{1_{\tilde{A}}}(i) \leq n \mu_q(\partial A) h(q)
\approx \frac{2n}{\sqrt{2 \pi n q(1-q)}} h(q).
\]
Taking $h(q)=Ent(q)$, we get
\[
\lambda(\partial \tilde{A}) \sum_i I_{1_{\tilde{A}}}^h(i) \leq
\frac{2}{\sqrt{2 \pi n q(1-q)}} \frac{2n}{\sqrt{2 \pi n q(1-q)}}
Ent(q) = \frac{4Ent(q)}{2\pi q(1-q)} \leq \frac{8\log(1/q)}{\pi}.
\]
Since for $\tilde{A}$, the right hand side of
Inequality~(\ref{Eq:Boundary}) is $\Theta(1)$, this implies that
Proposition~\ref{Prop:Boundary} is tight, up to a $(\log(1/q))$
multiplicative factor.

\subsection{The case $q>1/2$}

In order to show the tightness for $q>1/2$, we use the
{\it dual function}.
\begin{definition}
For a Boolean function $f:\{0,1\}^n \rightarrow \{0,1\}$, the dual
function $\overline{f}:\{0,1\}^n \rightarrow \{0,1\}$ is defined
by:
\[
\overline{f}(x_1,x_2,\ldots,x_n) = 1-f(1-x_1,1-x_2,\ldots,1-x_n).
\]
\end{definition}
It is easy to see that for all $0<q<1$,
\[
\mathbb{E}_q f = 1- \mathbb{E}_{1-q} \overline{f}
\]
where $\mathbb{E}_q(\cdot)$ denotes expectation w.r.t. the measure
$\mu_q$. Similarly, it is easy to show that for all $1 \leq i \leq
n$,
\[
I^q_f(i)=I^{1-q}_{\overline{f}}(i),
\]
where $I^q_f(i)$ denotes influence w.r.t. the measure $\mu_q$.
Therefore, the dual functions of the functions described above
show the tightness of our results for $q>1/2$.

%\section{Acknowledgements}

%We are grateful to Benjy Weiss for helping to prove 
%Theorem~\ref{Thm:Monotonization}, and to Gil Kalai and Micha Perles
%for valuable discussions. 

%Add acknowledgements to Orr (for the figure), to Benjy (for the
%measure theory), to Micha (for listening to the proof) and to Gil.

\end{document}